\theoremstyle{definition}
\renewcommand{\sectionmark}[1]{\markright{\textit{\thesection. #1}}{} }
\newtheorem{theorem}{Theorem}[section]
\newtheorem{proposition}[theorem]{Proposition}
\newtheorem{lemma}[theorem]{Lemma}
\newtheorem{corollary}[theorem]{Corollary}
\newtheorem{definition}[theorem]{Definition}
\newtheorem{remark}[theorem]{Remark}
\newtheorem{example}[theorem]{Example}
\newtheorem{question}[theorem]{Question}
\newtheorem{open problem}[theorem]{Open problem}
\newcommand{\R}{\mathbb R}
\newcommand{\Sp}{\mathbb S}
\newcommand{\N}{\mathbb N}
\newcommand{\C}{\mathbb C}
\newcommand{\Lip}{\mathrm{Lip}}
\newcommand{\Spec}{\mathrm{Spec}}
\newcommand{\Bor}{\mathrm{Bor}}
\newcommand{\Id}{\mathrm{Id}}
\newcommand{\pr}{\mathrm{Pr}}
\newcommand{\Wass}{\mathrm{Wass}}
\newcommand{\Diam}{\mathrm{diam}}
\newcommand{\Diag}{\mathrm{Diag}}
\newcommand{\calP}{{\mathcal P}}
\newcommand{\calR}{{\mathcal R}}
\newcommand{\calA}{{\mathcal A}}
\newcommand{\calB}{{\mathcal B}}
\newcommand{\calC}{{\mathcal C}}
\newcommand{\calH}{{\mathcal H}}
\newcommand{\calT}{{\mathcal T}}
\newcommand{\arWp}[1]{\underleftarrow{W}_{#1}}
\newcounter{para}
\title{$L^p$-Wasserstein distances on state and quasi-state spaces of $C^*$-algebras}
\author{Danila Zaev}
\address{Faculty of Mathematics, Higher School of Economics, Moscow}
\date{\today}
\email{zaev.da@gmail.com}
\keywords{Wasserstein space, Kantorovich problem, Wasserstein distance, state space, quasi-state space, quasi-linear state, $C^*$-algebra, commutative $C^*$-subalgebra, $Lip$-norm, Lipschitz gauge, compact quantum metric space}
\begin{document}

\begin{abstract}
	We construct an analogue of the classical $L^p$-Wasserstein distance for the state space of a $C^*$-algebra. Given an abstract Lipschitz gauge on a $C^*$-algebra $\calA$ in the sense of Rieffel, one can define the classical $L^p$-Wasserstein distance on the state space of each commutative $C^*$-subalgebra of $\calA$. We consider a projective limit of these metric spaces, which appears to be the space of all quasi-linear states, equipped with a distance function. We call this distance the projective $L^p$-Wasserstein distance. It is easy to show, that the state space of a $C^*$-algebra is naturally embedded in the space of its quasi-linear states, hence, the introduced distance is defined on the state space as well. We show that this distance is reasonable and well-behaved. We also formulate a sufficient condition for a Lipschitz gauge, such that the corresponding projective $L^p$-Wasserstein distance metricizes the weak$^*$-topology on the state space. 
\end{abstract}	

\maketitle
\tableofcontents

\section{Introduction}
\sectionmark{Introduction}
	We are going to connect the following three theories.
		
	\noindent \textbf{Quantum compact metric spaces}. This theory was basically developed by Marc Rieffel (see, for example, \cite{Rieffel1}, \cite{Rieffel2}, \cite{Rieffel3}). It studies state spaces of $C^*$-algebras (or, more generally, state spaces of unit-order spaces) equipped with a distance function. This distance function arise from an abstract Lipschitz gauge, defined on an algebra. Usually it is required for a distance function to metricize the weak$^*$-topology of the state space, and, hence, to provide it with a structure of a compact metric space. These ideas were pioneered by Alain Connes in the noncommutative geometry framework (see \cite{Connes}), and, as in present-day, it is one of the two most developed approaches to the noncommutative generalization of the notion of metric space (the other approach is due to Nick Weaver/Greg Kuperberg, \cite{Weaver}). Arguably, the main advantage of the Rieffel's approach is the possibility to define an analogue of a Gromov-Hausdorff distance between quantum metric spaces.
		
	\noindent \textbf{Kantorovich-Wasserstein spaces}. This theory studies spaces of probability measures equipped with a special type of distances, which are defined as solutions of a variational optimal transport problem (which is also called Kantorovich or Monge-Kantorovich problem, see \cite{AmbGigli1}, \cite{BogKol}, \cite{Villani1} for a survey). These distances depend on the distance on the base space (the space where probability measures are defined), and are parametrized by a parameter $p\in [1,\infty)$. The $L^p$-Wasserstein distance with $p=1$ is closely connected with the Connes-Rieffel distance (in the case of a commutative $C^*$-algebra, its state space can be thought of as a space of probability measures). In some sense, Connes generalized Kantorovich construction to the noncommutative case. But the $L^p$-Wasserstein distances with $p>1$ are also valuable. For example, when $p=2$, geodesics on the Wasserstein space correspond to a meaningful dynamics (see \cite{AmbGigli1}). Furthermore, the $L^p$-Wasserstein distances are used in quantitative estimations of such functionals as entropy or variation. There are several attempts to link Kantorovich theory with noncommutative geometry (\cite{CarMaas}, \cite{Martinetti1}, \cite{Martinetti2}). Some noncommutative analogues of the $L^2$- and $L^1$-Wasserstein distances were given in these papers. But the problem of constructing a meaningful analogue of the $L^p$-Wasserstein distance (for $p>1$) for the state space of noncommutative $C^*$-algebra in the general framework of Rieffel is still open.
	
	\noindent \textbf{Posets of commutative subalgebras}. One of the modern approaches to construct a mathematical foundation of quantum theory is to consider, instead of a noncommutative $C^*$-algebra of observables, its poset of commutative $C^*$-subalgebras. This approach is compatible with the so-called Bohr interpretation of quantum mechanics, and has a beautiful mathematical formalization in the language of topoi. These ideas were developed and described by different authors (see, for example, \cite{Heunen1},  \cite{Heunen2}, \cite{Isham1}), but the metric aspect of the theory has not been explored yet. When one consider commutative subalgebras instead of original noncommutative algebra, he/she inevitably loses some part of information. However, the state space of a $C^*$-algebra also contains only a part of the information about the algebra (more precisely, the part that is encoded by self-adjoint elements of the algebra, see \cite{Alfsen}). Thus, despite we potentially lose some data, it seems reasonable to define distances (in particular, analogues of the $L^p$-Wasserstein distances) on the state space of a noncommutative algebra, passing throw the poset of its commutative subalgebras. Further we show that this approach is actually justified. 
	In the topos-theoretical interpretation it is natural to consider a space of all quasi-linear states (\textquotedblleft quasi-states") instead of a state space itself (\cite{Heunen1}). Elements of this space are order- and unit-preserving maps from a $C^*$-algebra $\calA$ to the field of complex numbers $\C$ that are linear on commutative subalgebras of $\calA$. A state space naturally embeds in the corresponding quasi-state space. We do not use the language of topoi in the paper, but we provide a construction of (analogues of) the $L^p$-Wasserstein distances for both state and quasi-state spaces of $C^*$-algebras.

\textbf{Structure of the paper}. At first, we informally discuss the main ideas and formulate some problems for the further analysis. Then we provide a rigorous definition of the quasi-state space and establish its connection with the notion of the state space of an algebra. We define the quasi-state space of a $C^*$-algebra $\calA$ as the projective limit of a projective system of the state spaces of commutative $C^*$-subalgebras of $\calA$. It appears, that the state space of $\calA$ (equipped with its natural weak$^*$-topology) is topologically embedded in the corresponding quasi-state space.

Next step is to add metric information in the form of an abstract Lipschitz gauge into our framework. We recall some results of Rieffel and provide some new definitions. The reason for providing new definitions is that we are going to study distances on the state spaces of subalgebras, and to make it possible we need to modify the standard notion of a $\Lip$-seminorm. We establish some easy facts about the $L^p$-Wasserstein metrics on the state spaces of commutative $C^*$-subalgebras (which are actually spaces of probability measures), and then we define an analogue of the $L^p$-Wasserstein distance for the (quasi-)state space of the whole $C^*$-algebra $\calA$ (we call it the projective $L^p$-Wasserstein distance).

We formulate conditions on an abstract Lipschitz gauge, such that under them the corresponding projective $L^p$-Wasserstein distance metricizes the weak$^*$-topology of the state space of $\calA$. It appears, that it is sufficient for a gauge to be a $\Lip$-seminorm in the sense of Rieffel, to be finite on a dense subspace of each maximal commutative $C^*$-subalgebra of $\calA$, and to satisfy a certain inequality. Unfortunately, there is a lack of non-trivial examples of such Lipschitz gauges.

We show that the quasi-state space of a $C^*$-algebra, equipped with the projective $L^p$-Wasserstein distance, can be seen as the projective limit of the state spaces of unital commutative $C^*$-subalgebras, equipped with the ordinary $L^p$-Wasserstein distances. Besides the main results, some other facts are proved: for example, the fact, that diameters of the state space and the quasi-state space coincide under mild assumptions on a Lipschitz gauge, and the fact that the diagram of all commutative unital $C^*$-subalgebras can be recovered from the self-adjoint part of the corresponding $C^*$-algebra.

\section{Informal discussion}
\sectionmark{Informal discussion}
Let $X$ be a compact metrizable space. Then $C(X)$, the algebra of all continuous complex-valued functions, is a commutative $C^*$-algebra. It is unital, because $X$ is compact, and separable, because $X$ is metrizable. Denote by $\calP(X)$ the set of all Borel probability measures on $X$. Due to the fact, that on a metrizable compact space any Borel measure is a Radon one, $\calP(X)$ is equal to the positive part of the unit sphere of the Banach dual of $C(X)$: $\calP(X)=\Sp_1^+(C(X)^*)$. Equip $\calP(X)$ with the weak$^*$-topology. By Banach-Alaoglu theorem, it is compact, and, since $C(X)$ is separable, $\calP(X)$ is metrizable. It is also known, that $\calP(X)$ is a Choquet simplex (see \cite{Phelps}). Recall the definition of a simplex.

\begin{definition}
	\textbf{Choquet boundary} $\partial_e(K)$ of a compact convex set $K$ is a subset consisted of all extreme points, i.e. such points $a\in K$ that
	$$
	\forall a_1, a_2\in K, a_1\neq a_2:\ t\cdot a_1+(1-t)\cdot a_2=a \implies t=0\ \text{or}\ t=1
	$$
\end{definition}

\begin{definition}
	\label{choquet simplex}
	Compact convex set $K$ is a \textbf{Choquet simplex} iff for every element $a\in K$ there is a \textbf{unique} measure $\mu_a$ on $K$ such that
	\begin{itemize}
		\item $\mu_a(\partial_e(K))=1$
		\item $a=\int_K x d \mu_a$
	\end{itemize}
\end{definition}

Recall, that the \textbf{existence} of a representing measure is guaranteed for any compact convex set (see \cite{Phelps}):
\begin{theorem}[Choquet]
	For every element $a\in K$ of a compact convex metrizable set $K$, there is a probability measure $\mu_a$ on $K$ such that
	\begin{itemize}
		\item $\mu_a(\partial_e(K))=1$
		\item $a=\int_K x d \mu_a$
	\end{itemize}
\end{theorem}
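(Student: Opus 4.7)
The plan is to follow the classical barycenter/maximal-measure approach. Let $M_a(K)$ denote the set of Borel probability measures on $K$ with barycenter $a$, i.e., those $\mu$ satisfying $\int_K \ell \, d\mu = \ell(a)$ for every continuous affine $\ell\colon K \to \R$. This set is nonempty (it contains $\delta_a$), convex, and weak$^*$-closed in $\calP(K)$, hence weak$^*$-compact by Banach--Alaoglu. Introduce the Choquet preorder on $\calP(K)$ by declaring $\mu \prec \nu$ whenever $\int f\, d\mu \le \int f\, d\nu$ for every continuous convex function $f\colon K \to \R$. A Zorn's lemma argument applied to chains in $M_a(K)$ — using weak$^*$-compactness to extract a cluster point above each chain — produces a $\prec$-maximal element $\mu_a \in M_a(K)$.

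Next I would exploit metrizability of $K$ to build a strictly convex continuous function on $K$. Since $K$ is a compact convex subset of a locally convex space and is metrizable, the continuous affine functions on $K$ are separable in the supremum norm; picking a countable family $(\ell_n)$ dense in their unit ball, set $f(x) = \sum_{n \ge 1} 2^{-n} \ell_n(x)^2$. Then $f$ is continuous, convex, and strictly convex, since for $x\neq y$ some $\ell_n$ separates them and the corresponding term is strictly convex along the segment $[x,y]$. A separate but standard observation is that in the metrizable case $\partial_e(K)$ is a $G_\delta$ in $K$, hence Borel, so the conclusion $\mu_a(\partial_e(K))=1$ is meaningful.

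The decisive step is showing that $\mu_a$ is concentrated on $\partial_e(K)$. For each continuous $g$ on $K$ introduce the upper envelope $\widehat{g}(x) = \inf\{h(x) : h \text{ continuous concave on } K,\; h \ge g\}$. Two facts combine: first, a dilation lemma says that if $\mu \in \calP(K)$ and $\int(\widehat{g} - g)\, d\mu > 0$, then one can construct $\nu \succ \mu$ with $\nu \neq \mu$ by disintegrating along the face structure witnessed by the gap between $g$ and $\widehat{g}$; second, for strictly convex continuous $g$ one has $\{x \in K : \widehat{g}(x) = g(x)\} = \partial_e(K)$, proved by a Bauer-type argument showing that $\widehat{g}(x)=g(x)$ forbids any nontrivial convex representation of $x$. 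Applying the first fact to $\mu_a$ with $g = f$ and invoking maximality gives $\int (\widehat{f} - f)\, d\mu_a = 0$, so $\widehat{f} = f$ holds $\mu_a$-a.e., and by the second fact $\mu_a(\partial_e(K)) = 1$.

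The main obstacle I anticipate is precisely the combination of the dilation lemma with the identification $\{\widehat{f} = f\} = \partial_e(K)$. The former requires a careful Hahn--Banach or measure-disintegration construction to genuinely enlarge a measure in the Choquet order, and the latter uses strict convexity in an essential way — both are considerably more delicate than the soft compactness and density arguments of the earlier steps and constitute the technical heart of Choquet's theorem.
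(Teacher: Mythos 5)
The paper does not prove this statement: it is recalled as a classical result with a citation to Phelps's \emph{Lectures on Choquet's Theorem}, so there is no in-paper argument to compare against. Your outline is exactly the standard proof from that reference --- a $\prec$-maximal measure with barycenter $a$ via Zorn's lemma, a strictly convex continuous function built from a dense sequence of affine functions (this is where metrizability enters), and the upper-envelope argument --- and it is sound as a plan; note only that the theorem needs just the inclusion $\{x: \widehat f(x)=f(x)\}\subseteq\partial_e(K)$, which is the easy direction supplied by strict convexity, not the full equality you assert.
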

\textbf{Bauer simplex} is a Choquet simplex with closed (hence compact) Choquet boundary. There are infinitely many non-isomorphic Bauer simplexes. In fact, $\calP(X)$ is an example of a Bauer simplex. Its Choquet boundary consists of all Dirac measures, and the boundary is homeomorphic to $X$ via the identification: $x \rightarrow \delta_x$. It is also known, that every Bauer simplex is isomorphic to $\calP(Y)$ for some compact Hausdorff space $Y$ (see \cite{Batty}).

Note, that Dirac measures can be characterized in the different ways:
\begin{enumerate}
	\item If $\mu\in \partial_e(\calP(X))$, then it is a Dirac measure, $\mu=\delta_x$ for some $x\in X$.
	\item If $\mu\in \calP(X)$, such that it has a unique representative measure, $\tilde{\mu}\in \calP(\partial_e(\calP(X)))$, which is concentrated on $\{\mu\}$: $\tilde{\mu}(\{\mu\})=1$, then $\mu$ is a Dirac measure.
	\item If $\mu\in \calP(X)=\Sp_1^+(C(X)^*)$ is a unital $C^*$-homomorphism between algebras $C(X)$ and $\C$: $\mu(f(x)\cdot g(x))=\mu(f(x))\cdot \mu(g(x))$, where $\mu(h(x)):=\int_X h(x) d\mu$, then it is a Dirac measure.
\end{enumerate}
The first characterization uses convex structure of $\calP(X)$. It makes sense for every convex compact set. The second characterization uses measure-theoretic structure of $\calP(X)$, and makes sense for even more general cases. In fact, for convex compact sets both the first and the second characterizations coincide (see \cite{Dynkin}). Thus, we can define generalized Dirac elements of some convex compact set as the elements of its Choquet boundary. 

The third definition uses the fact, that $\calP(X)$ is a subset of a dual space to some algebra. It uses multiplicative structure of predual, and the coincidence of this characterization with the previous ones is a property of the particular object, $\calP(X)$. One cannot expect such coincidence in any other case, but, of course, we are able to provide the following definition of generalized Dirac measures: for a subset of a Banach dual to some $C^*$-algebra, generalized \textquotedblleft Dirac measure" can be defined as an element, which is a unital $C^*$-homomorphism between the algebra and $\C$ (its continuity is automatically guaranteed). Recall, that the set of all unital $C^*$-homomorphisms from $C^*$-algebra $\calA$ to $\C$ is called the Gelfand spectrum of the algebra. It is a subset of the Banach dual of an algebra, and can be equipped with the weak$^*$-topology inherited by this inclusion: $\Spec(\calA)\subset \calA^*$. It is also true, that $\Spec(\calA)\subseteq \Sp_1^+(\calA^*)$. In the case $\calA=C(X)$, $\Spec(C(X))=\partial_e(\calP(X))$.

To sum up the above discussion, let us emphasize the following two facts:
\begin{itemize}
	\item Any compact metrizable space $X$ is homeomorphic to $\partial_e(\calP(X))$ and homeomorphic to $\Spec(C(X))$, where $\calP(X)$ is the set of all Borel probability measures on $X$. In the case of non-metrizable compact Hausdorff space $X$, we should define $\calP(X)$ as the set of all Radon probability measures. 
	\item For any commutative unital $C^*$-algebra $\calA$ it is true, that $\calP(X)=\Sp_1^+(\calA^*)$, and $\partial_e(\calP(X))$ appears to be homeomorphic to $\Spec(\calA)=\Spec(C(\Spec(\calA)))$. Moreover, $\Spec(\calA)$ is compact and, if  $\calA$ is separable, it is metrizable.
\end{itemize}

We are going to add some metric information in our framework. It can be done in different ways. One way is to define distance function on $X$, which metricizes its topology. The other way is to define a Lipschitz gauge on $C(X)$. Following Rieffel, we define Lipschitz gauge as a partially-defined seminorm on the self-adjoint part of an algebra, which satisfies some natural requirements. In the following we provide several rigorous definitions of Lipschitz gauges (or, more precisely, collections of requirements a seminorm should satisfy to be a Lipschitz gauge). As for now, let us consider that $L: \calA^{sa} \rightarrow [0,+\infty]$ is an abstract Lipschitz gauge on a separable unital $C^*$-algebra $\calA$ ($\calA^{sa}$ is the subspace of all self-adjoint elements of $\calA$), if it is
\begin{enumerate}
	\item absolutely homogeneous: $L(af)=|a|L(f)$, for any $a\in \R$, $f\in \calA^{sa}$,
	\item subadditive: $L(f+g)\leq L(f)+L(g)$, for any $f, g\in \calA^{sa}$,
	\item lower semi-continuous: $\{f\in \calA^{sa}: L(f)\leq t\}$ is norm closed for one, hence every, $t\in (0,+\infty)$,
	\item $L(a)=0$ iff $a\in 1\cdot \R$,
	\item $d_L(\mu,\nu):=\sup\{|\mu(f)-\nu(f)|: f\in \calA^{sa},\ L(f)\leq 1\}$ is a distance function on $\Sp_1^+(\calA^*)$ (positive part of a unit sphere of the Banach dual of $\calA$), which metricizes its weak$^*$-topology.
\end{enumerate}
Given an abstract Lipschitz gauge $L$ on $\calA=C(X)$, we are able to define a distance $d_L$ on $\calP(X)=\Sp_1^+(C(X)^*)$, and, restricting it to $\partial_e(\calP(X))=X$, we define a distance function $d$ on $X$: $d(x,y)=d_L(\delta_x,\delta_y)$. 
Hence, it is possible to recover a distance on $X$ from an abstract Lipschitz gauge on $\calA=C(X)$. 
We can define a new Lipschitz gauge by the formula
$$
L_d^e(f):=\sup\left\{\frac{f(x)-f(y)}{d(x,y)}: x\neq y,\ x,y\in X\right\}
$$
The distance function induced by $L_d^e$ coincides with the \textbf{$L^1$-Wasserstein distance}, which is defined \textquotedblleft in the dual way":
\begin{multline}
d_e(\mu,\nu):=\sup\{|\mu(f)-\nu(f)|: f\in \calA^{sa},\ L_d^e(f)\leq 1\}=\\
=W_1(\mu,\nu):=\inf\left\{\int d(x,y) d\pi: \pi\in \calP(X\times Y), \mathrm{Pr}(\pi)=(\mu,\nu)\right\}
\end{multline}
where $\pr: \calP(X\times Y)\rightarrow \calP(X)\times \calP(Y)$ is defined as $\pr (\pi):=((\pr_X)_\#(\pi),(\pr_Y)_\#(\pi))$. This equality is known as \textquotedblleft Kantorovich duality" (or Kantorovich-Rubinstein formula), and it is a well-known result in the mass transportation theory (see \cite{AmbGigli1}, \cite{BogKol}, \cite{Villani1}). It allows one to define a distance on $\calP(X)$ via minimization over the set of transport plans (probability measures on $X\times X$ with fixed marginals).
Analogously, we can define the $L^p$-Wasserstein distances on $\calP(X)$ for $p\in [1,+\infty)$ using the definition via transport plans:
$$
W_p(\mu,\nu):=\inf\left\{\left(\int d^p(x,y) d\pi\right)^\frac{1}{p}: \pi\in \calP(X\times Y), \mathrm{Pr}(\pi)=(\mu,\nu)\right\}
$$
It appears, that all these distances metricize the weak$^*$-topology on $\calP(X)$ (see \cite{BogKol}). Some of the $W_p$-distances have additional good properties (see \cite{BogKol} for details).

It was also proved by Rieffel (see Theorem 8.1 in \cite{Rieffel2}), that under some additional assumptions about Lipschitz gauge, $L=L_d^e$ on $C(X)$. This result is important for us, but we shall discuss the details later.

To sum up, a pair of a commutative separable $C^*$-algebra $\calA$ and an abstract Lipschitz gauge $L$ on $\calA$ determines a Wasserstein metric space $(\calP(X), W_p)$ for each $p\in [1,+\infty)$. In the following sections we shall also consider non-separable $C^*$-algebras, which have non-metrizable state spaces. In this case the pairs $(\calP(X), W_p)$ should be thought as pairs of topological spaces and lower semi-continuous distances on them.

Consider a more general case of possibly non-commutative unital separable $C^*$-algebra $\calA$. In this case we define the state space as the set $S(\calA):=\Sp_1^+(\calA^*)$. It is a direct analogue of a set of probability measures. It can be equipped with the corresponding weak$^*$-topology, and appears to be a compact convex set. However, it is known (see \cite{Batty}), that $S(\calA)$ is a simplex if and only if $\calA$ is commutative. Since every space of probability measures $\calP(X)$ is a simplex, in the noncommutative case there is no analogue of a space $X$: $S(\calA)\neq \calP(X)$ for any $X$, if $\calA$ is not commutative. 

An element of the Choquet boundary of $S(\calA)$, $\partial_e(S(\calA))$, is called a pure state on $\calA$. As follows from the Choquet theorem, every state $\mu$ can be represented by $\tilde{\mu}\in \calP(\partial_e(S(\calA)))$, $\mathrm{bar}(\tilde{\mu})=\mu$ (\textquotedblleft bar" = barycenter), but this representation is not unique in general. 

As we noted earlier, in the commutative case, $X$ can be defined as the boundary of $S(\calA)$ or, equivalently, as the Gelfand spectrum $\Spec(\calA)$. In the noncommutative case, $\partial_e(S(\calA))$ does not coincide with $\Spec(\calA)$. Moreover, for the algebra $\calB(\calH)$ of all bounded operators on a Hilbert space, $\Spec(\calB(\calH))$ is empty. It is possible to define \textquotedblleft spectrum" of a noncommutative $C^*$-algebra in several different ways (e.g. as a primitive ideal space, as a space of equivalence classes of irreducible representations, etc.), but since $S(\calA)\neq \calP(X)$ for any $X$, there is no much sense to do that, at least, if our goal is to construct an analogue of the Wasserstein theory for the state space.

Consider a commutative unital $C^*$-subalgebra $\calA_\alpha\subseteq \calA$. Using the results described above, we can define its state space $S(\calA_\alpha)=\calP(X_\alpha)$ and its Gelfand spectrum $X:=\Spec(\calA_\alpha)$, which is homeomorphic to $\partial_e(\calP(X_\alpha))$. If $\calA_\alpha$, $\calA_\beta$ are two commutative unital $C^*$-subalgebras of $\calA$ and $\calA_\alpha\subseteq \calA_\beta$, then there are surjective continuous maps:
\begin{eqnarray}
X_\beta& \twoheadrightarrow&  X_\alpha\\
\calP(X_\beta)& \twoheadrightarrow&  \calP(X_\alpha)
\end{eqnarray}
The first map is actually the restriction map from $\calA_\beta$ to $\calA_\alpha$ of elements of the Gelfand spectrum. The second map is the restriction map of continuous linear functionals. These maps are onto due to the Gelfand duality (each injective $C^*$-homomorphism of commutative $C^*$-algebras corresponds to a surjective continuous map between their Gelfand spectrums). In the case of state spaces, the map appears to be affine:
$$
t\cdot \mu_\beta + (1-t)\cdot \nu_\beta \rightarrow t\cdot \mu_\alpha + (1-t)\cdot \nu_\alpha,\ t\in [0,1].
$$

We are able to consider an ordered set of all unital commutative $C^*$-subalgebras $\calA_\alpha$ of a $C^*$-algebra $\calA$, where ordering is defined by the relation of inclusion. It can be thought of as a diagram in the category of commutative unital $C^*$-algebras, where morphisms are injective unital $C^*$-homomorphisms. We can apply Gelfand duality to obtain a diagram in the dual category of Hausdorff compact spaces with surjective continuous maps, and then apply a functor that maps each compact space to the set of all probability measures on it (equipped with the weak$^*$-topology) and sends each map $f: X_\beta \rightarrow X_\alpha$ to the map $f_\#: \calP(X_\beta) \rightarrow \calP(X_\alpha)$, which is a pushforward of measures. We obtain a diagram in the category of convex compact spaces and affine continuous maps. It is natural to consider the projective limit of this diagram. It exists and appears to be isomorphic to the space $QS(\calA)$ of all quasi-states (quasi-linear states) of the algebra $\calA$. Here $QS(\calA)$ is supposed to be equipped with a projective topology, inherited from the inclusion $QS(\calA)\subseteq \prod_\alpha S(\calA_\alpha)$.
\begin{definition}
	\label{quasi-state}
	A \textbf{quasi-state} on a $C^*$-algebra $\calA$ is a map $\mu: \calA \rightarrow \C$, such that it is linear on all unital commutative $C^*$-subalgebras of $\calA$, satisfies $\mu(a+ib)=\mu(a)+i\mu(b)$ for all self-adjoint $a, b\in \calA^{sa}$, $\mu(a^*a)\geq 0$ for all $a\in \calA$, and $\mu(1)=1$.
\end{definition}
It is clear, that the space of all quasi-states is convex. Since all $S(\calA_\alpha)$ are compact Hausdorff, $\prod_\alpha S(\calA_\alpha)$ is compact and Hausdorff too, and $QS(\calA)\subseteq \prod_\alpha S(\calA_\alpha)$ is closed,
hence compact and Hausdorff. Each quasi-state can be restricted to a subalgebra $\calA_\alpha$, and this restriction defines a continuous affine surjection $QS(\calA)\twoheadrightarrow S(\calA_\alpha)$.

It is clear, that the state space $S(\calA)$ is a subset of the corresponding quasi-state space: $S(\calA) \subseteq QS(\calA)$. It is natural to ask, does the weak$^*$-topology on $S(\calA)$ coincide with the topology obtained by this inclusion?
\begin{question}
	Does the image of the inclusion $S(\calA) \subseteq QS(\calA)$ homeomorphic to $S(\calA)$?
\end{question}
The answer is affirmative, as it will be shown in the next section (Corollary \ref{topologies on S and QS coincide}).

There are cases, when $S(\calA)=QS(\calA)$. It is stated by the Gleason's theorem, that,
if $\calA$ is a von Neumann algebra without a direct factor isomorphic to $\calB(\C^2)$, every quasi-state of $\calA$ is a state. In particular, it is true for every $\calA=\calB(\calH)$, if $\dim \calH>2$ (see \cite{Bunce} for details).

Let us add some metric information to our $C^*$-algebra $\calA$. As in the commutative case, we can consider abstract Lipschitz gauge defined on the algebra. In fact, we can use exactly the same definition of a gauge, as the one given above (as one can note, it does not require commutativity of a $C^*$-algebra). As follows from the definition, we are able to define the distance:
$$
d_L(\mu,\nu):=\sup\{|\mu(f)-\nu(f)|: f\in \calA^{sa},\ L(f)\leq 1\}
$$
on $S(\calA)$, which metricizes its weak$^*$-topology. In the noncommutative geometry it is known by the names \textquotedblleft Connes' distance" or \textquotedblleft spectral distance". The last name is motivated by the fact, that usually $L$ is defined as $L(f):=||[D,f]||$ for some Dirac operator $D$ (see \cite{Connes} for definitions and details). This distance can be thought of as a generalization of the Kantorovich distance. The problem is that there is no clear way to construct an analogue of the $L^p$-Wasserstein distance for $S(\calA)$, when $\calA$ is a (possibly noncommutative) unital $C^*$-algebra.

Let us define a distance $d_{L, \alpha}$ on $S(\calA_\alpha)$ for a commutative unital subalgebra $\calA_\alpha\subseteq \calA$ in the following way:
$$
d_{L, \alpha}(\mu_\alpha,\nu_\alpha):=\sup\{|\mu(f)-\nu(f)|: f\in \calA^{sa}_\alpha,\ L(f)\leq 1\}
$$
Due to the fact, that $S(\calA_\alpha)=\calP(\Spec(\calA_\alpha))$, we can define the $L^p$-Wasserstein distances on $S(\calA_\alpha)$ for $p\in [1,+\infty)$ by the formula:
$$
W_{p,\alpha}(\mu,\nu):=\inf\left\{\left(\int d_{L,\alpha}^p(x,y) d\pi\right)^\frac{1}{p}: \pi\in \calP(\Spec(\calA_\alpha)\times \Spec(\calA_\alpha)),\ \mathrm{Pr}(\pi)=(\mu,\nu)\right\}
$$

The natural questions are:
\begin{enumerate}
	\item Is $W_{p, \alpha}$ a distance on $S(\calA_\alpha)$ for any $\calA_\alpha$?
	\item Is it true, that $W_{p,\alpha}\leq W_{p,\beta}$ iff $\calA_\alpha\subseteq \calA_\beta$?
	\item Does $W_{1,\alpha}=d_{L,\alpha}$?
	\item Does $W_{p, \alpha}$ metricize the weak$^*$-topology on $S(\calA_\alpha)$?
\end{enumerate}
These questions will be discussed later in the paper, and the answers for them can be found on one of the next sections.

Let us define a distance on the quasi-state space $QS(\calA)$ (and, hence, on the state space $S(\calA)$ as well) as follows:
$$
\arWp{p}(\mu,\nu):=\sup\{W_{p,\alpha}(\mu_\alpha,\nu_\alpha): \calA_\alpha\subseteq \calA\},
$$
where $\calA_\alpha$ is a commutative unital $C^*$-subalgebra of $\calA$, $\mu_\alpha:=\mu|_{\calA_\alpha}$ is the restriction of a quasi-state $\mu$ from $\calA$ to $\calA_\alpha$.

Again, we have a list of natural questions about this just defined object:
\begin{enumerate}
	\item Is $\arWp{p}$ a distance function on $QS(\calA)$ (or on $S(\calA)$)?
	\item Does $\arWp{p}<\infty$ on $QS(\calA)$ ($S(\calA)$)? 
	\item Does $\arWp{1}=d_L$ on $S(\calA)$?
	\item Does $\arWp{p}$ metricize the weak$^*$-topology on $S(\calA)$?
\end{enumerate}
It appears, that under some assumptions about an abstract Lipschitz gauge, the answer to these questions is affirmative. We call $\arWp{p}$ the projective $L^p$-Wasserstein distance.

We shall also provide a positive answer to the following question:
\begin{question}
	Is it possible to define a category, such that $(QS(\calA), \arWp{p})$ appears to be a projective limit of the diagram $\{(S(\calA_\alpha), W_{p,\alpha})\}$ (where the order is defined in a natural way)?
\end{question}

\section {Space of quasi-states}
\sectionmark{Space of quasi-states}

In this section we provide a rigorous definition of the quasi-state space of a unital $C^*$-algebra $\calA$. It appears to be a projective limit of a projective system of the state spaces of unital commutative $C^*$-subalgebras of $\calA$.

Define the category of all unital commutative $C^*$-algebras with injective unital $C^*$-homomorphisms between them. Denote it by $\bf ucC^*_{in}$. Note, that due to the $C^*$-structure, these homomorphisms are isometric. It follows from the Gelfand duality, that this category is anti-equivalent to the category of all compact Hausdorff spaces with continuous surjections ($\bf CH_{sur}$):

	\begin{tikzcd}
		\bf {ucC^*_{in}}^{op}
		\arrow[bend left]{r}{Spec}
		& \bf CH_{sur}
		\arrow [bend left]{l}{C(\cdot)}
	\end{tikzcd}

Here $\Spec$ is a functor that maps each algebra into its Gelfand spectrum. The (weak) inverse functor to $\Spec$ is a functor $C(\cdot)$, which sends a compact space to its corresponding commutative $C^*$-algebra of all $\C$-valued continuous functions: $\calA\simeq C(\Spec(\calA))$.

Let $\bf BS_{sur}$ be a category of all Hausdorff Bauer simplexes (Choquet simplexes with closed Choquet boundary, see Definition \ref{choquet simplex}) with surjective continuous affine maps. Consider a functor $\calP: \bf CH_{sur} \rightarrow \bf BS_{sur}$, which maps each compact Hausdorff space to the space of all Radon probability measures on it, equipped with the weak$^*$-topology, and each continuous surjection $T: X\twoheadrightarrow Y$ sends to a continuous affine surjection $T_\#: \calP(X)\twoheadrightarrow \calP(Y)$, which is defined as follows: $T_\#(\mu)(B):=\mu(T^{-1}(B))$, $\forall B\in \Bor(Y)$, $T^{-1}(B)$ is the preimage of $B$, $\mu\in \calP(X)$. The (weak) inverse of this functor is $\partial_e$, which associates to every Bauer simplex its Choquet boundary. It is a standard fact, that a space of all Radon probability measures on a compact Hausdorff space is a Hausdorff Bauer simplex (see \cite{Alfsen}, \cite{Bunce}, \cite{Phelps} for details). The fact, that $\partial_e\circ \calP \simeq \Id_{\bf CH_{sur}}$ follows from the fact, that the Choquet boundary coincides with the set of all Dirac measures, which are exactly the elements of the Gelfand spectrum. An arguably non-trivial fact here is the equivalence $\calP\circ \partial_e \simeq \Id_{\bf BS_{sur}}$ on the level of morphisms. Let us check it.

\begin{proposition}
	Let $T: \calP(X)\twoheadrightarrow \calP(Y)$ be a morphism in $\bf BS_{sur}$, $T|_X$ be the restriction of $T$ to $X$, $(T|_X)|_\#: \calP(X)\rightarrow \calP(Y)$ be the corresponding pushforward of measures. Then $T|_X$ is a morphism between $X$ and $Y$ in $\bf CH_{sur}$, and $(T|_X)_\#=T$.
\end{proposition}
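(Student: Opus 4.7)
The plan is to identify $X$ with $\partial_e\calP(X)$ and $Y$ with $\partial_e\calP(Y)$ via the Dirac embedding $x\mapsto \delta_x$ (a weak$^*$-homeomorphism onto its image), and then to verify in sequence that (a) $T$ sends the set of Dirac measures of $\calP(X)$ into the set of Dirac measures of $\calP(Y)$, so that $T|_X$ is actually a map $X\to Y$; (b) $T|_X$ is continuous; (c) $T|_X$ is surjective; (d) $(T|_X)_\# = T$.

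Step (a) is the crux of the argument. For $x\in X$ the element $T(\delta_x)\in \calP(Y)$ admits, by the Bauer simplex property, a unique representing Radon probability measure on the closed Choquet boundary $Y=\partial_e\calP(Y)$, and the task is to show this representing measure is a point mass. The natural attempt is to use extremality of $\delta_x$: if one could write $T(\delta_x)=\tfrac12(\alpha+\beta)$ with $\alpha\neq\beta$ in $\calP(Y)$, one would lift $\alpha,\beta$ to preimages $\alpha',\beta'\in\calP(X)$ using surjectivity of $T$ and try to contradict $\delta_x\in \partial_e\calP(X)$ by working inside the face $T^{-1}(T(\delta_x))$. I regard this as the main obstacle, since an affine continuous surjection between compact convex sets does not preserve extreme points in general; making the argument go through really has to exploit both the closedness of the Choquet boundary (the Bauer structure) and the surjectivity of $T$.

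Once step (a) is in hand, step (b) is immediate: $T|_X(x)=T(\delta_x)$ is continuous in $x$ because the Dirac embedding and $T$ are both weak$^*$-continuous. For step (c), fix $y\in Y$; surjectivity of $T$ gives $T^{-1}(\{\delta_y\})\ne\emptyset$, and since $\delta_y$ is extreme in $\calP(Y)$ and $T$ is affine, this preimage is a closed face of $\calP(X)$. Applying Krein--Milman to this compact convex face yields an extreme point of it, and extreme points of a face are automatically extreme in the ambient set, so the face contains some Dirac $\delta_x$, giving $T|_X(x)=y$.

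For step (d), the maps $T$ and $(T|_X)_\#$ are both weak$^*$-continuous and affine from $\calP(X)$ to $\calP(Y)$, and by construction they coincide on every Dirac measure: $(T|_X)_\#(\delta_x)=\delta_{T|_X(x)}=T(\delta_x)$. Since, by Krein--Milman, $\calP(X)$ is the weak$^*$-closed convex hull of its Dirac measures, two continuous affine maps agreeing on the Diracs must agree everywhere; hence $(T|_X)_\#=T$.
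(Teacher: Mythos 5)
Your steps (b), (c), and (d) are sound: (c) coincides with the paper's argument (the preimage of $\delta_y$ is a nonempty closed face, and extreme points of a face are extreme in the ambient set), while for (d) you use Krein--Milman and the density of convex combinations of Diracs where the paper instead tests both maps against $C(Y)$ via the Riesz representation theorem --- either route works once (a)--(c) are in hand. The problem is step (a), which you explicitly leave open: you correctly observe that a continuous affine surjection between compact convex sets need not send extreme points to extreme points, you announce that the Bauer structure and the surjectivity of $T$ must somehow be exploited, but you never produce the argument. This is a genuine gap, and not a cosmetic one: take $X=\{1,2,3\}$, $Y=\{1,2\}$, and the continuous affine surjection $T:\calP(X)\twoheadrightarrow\calP(Y)$ determined by $T(\delta_1)=\delta_1$, $T(\delta_2)=\delta_2$, $T(\delta_3)=\tfrac{1}{2}(\delta_1+\delta_2)$. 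Both spaces are Bauer simplices and $T$ is a morphism of $\bf BS_{sur}$ as that category is defined in the paper, yet $T(\delta_3)$ is not extreme, so $T|_X$ does not land in $Y$ and $T$ is not a pushforward. Step (a) therefore cannot be established from the stated hypotheses alone.

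For what it is worth, the paper's own treatment of (a) is the single sentence \textquotedblleft Since $T$ is affine, it sends extreme points to extreme points,\textquotedblright\ which is precisely the unjustified assertion you declined to make; you have in fact located the weak point of the paper's proof rather than merely failed to reproduce it. The proposition becomes correct if the morphisms of $\bf BS_{sur}$ are additionally required to preserve Choquet boundaries (or are taken to be exactly the maps in the image of the functor $\calP$), and with that amendment the remainder of your argument goes through unchanged. As written, however, your proposal is not a proof, because its central step is only a description of an obstacle, and that obstacle is real.
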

\begin{proof}
	$T|_X$ is, by definition, a continuous map from $X$ to $\calP(Y)$. Since $T$ is affine, it sends extreme points to extreme points, hence, $T|_X: X\rightarrow Y$. By surjectivity, contunuity, and affinity of $T$, $T^{-1}(y)$ is a nonempty closed face of $\calP(X)$. It is standard, that extreme points of a face are extreme in the whole convex set. Hence, $T^{-1}(y)\cap X$ is not empty, and $T|_X$ is surjective.
	
	By Gelfand duality, $(T|_X)^*: C(Y)\rightarrow C(X)$ is a morphism in $\bf ucC^*_{in}$. For every $f\in C(Y)$, $(T|_X)^*(f):=f\circ T|_X \in C(X)$ and $\int (f\circ T|_X) d\mu = \int f dT(\mu)$ for every $\mu\in \calP(X)$. By the Rietz representation theorem, there is a bijective correspondence between $\mu\in \calP(Y)$ and positive continuous linear functional on $\calC(Y)$. Since
	$$
	((T|_X)_\#\mu)(f):=\int_Y f(y) d((T|_X)_\#\mu)=\int_X f(T|_X(x))d\mu=\int_Y f(y)dT(\mu)=T(\mu)(f)
	$$
	is true for every $f\in C(Y)$, the statement of the proposition follows.
\end{proof}

Let us call a topology on a convex set $K$ \textbf{compatible} with the convex structure iff the operation: $A_t: K\times K \rightarrow K$, $A_t(a,b)=t\cdot a+ (1-t) \cdot b$ is continuous for every $t\in[0,1]$. By \textbf{compact convex space} we shall mean a convex set equipped with a compatible topology, such that it appears to be compact. Define a category of convex compact Hausdorff spaces with affine continuous maps between them. We denote it as $\bf CCH$. It is clear, that $\bf BS_{sur}$ is a subcategory of $\bf CCH$. Hence, there exists an injective on objects faithful functor from $\bf BS_{sur}$ to $\bf CCH$, associated with the inclusion. Denote it by $i$.

The following diagram indicates the relationship between categories. Functor $S: \bf {ucC^*_{in}}^{op} \rightarrow \bf BS_{sur}$ associates with an algebra the unit sphere of its Banach dual space, and sends every morphism of algebras to the respective dual (adjoint) map.

\begin{tikzcd}
	\bf {ucC^*_{in}}^{op}
	\arrow[bend left=50]{r}{\Spec}
	\arrow[bend right=100]{rr}{S}
	& \bf CH_{sur}
	\arrow[bend left=50, swap]{l}{C(\cdot)}
	\arrow[bend left=50]{r}{\calP}
	& \bf BS_{sur}
	\arrow[bend left=50, swap]{l}{\partial_e}
	\arrow[hookrightarrow]{r}{i}
	& \bf CCH
\end{tikzcd}

Let us state some facts about the category $\bf CCH$.
\begin{proposition}
	For any set of objects $\{K_\alpha\}$ in $\bf CCH$, there is a categorical product $\prod_\alpha K_\alpha$, which is defined as a topological product with the natural convex structure:
	$$
	t\cdot a + (1-t) \cdot b = (t\cdot a_\alpha + (1-t) \cdot b_\alpha)
	$$
	where $a=(a_\alpha)$, $b=(b_\alpha)$, $t\in [0,1]$. Projection maps are projections in the usual sense.
\end{proposition}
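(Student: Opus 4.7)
The plan is to verify in turn that (i) $\prod_\alpha K_\alpha$, equipped with the product topology and coordinatewise affine structure, is indeed an object of $\bf CCH$, (ii) the coordinate projections are morphisms in $\bf CCH$, and (iii) the usual universal property of the product carries over.

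First I would check that the coordinatewise convex combination $t\cdot a + (1-t)\cdot b := (t\cdot a_\alpha+(1-t)\cdot b_\alpha)_\alpha$ is well defined on $\prod_\alpha K_\alpha$ and makes it a convex set, which is immediate because each $K_\alpha$ is convex. Compactness follows from Tychonoff's theorem and the Hausdorff property from the fact that a product of Hausdorff spaces is Hausdorff. To see that the product topology is compatible with this convex structure, observe that for fixed $t\in[0,1]$ the affine combination map $A_t^{\prod}:\prod_\alpha K_\alpha\times \prod_\alpha K_\alpha\to \prod_\alpha K_\alpha$ factors through each coordinate projection as $\pi_\beta\circ A_t^{\prod}=A_t^{\beta}\circ(\pi_\beta\times\pi_\beta)$, where each $A_t^{\beta}$ is continuous by compatibility on $K_\beta$; by the universal property of the product topology this forces $A_t^{\prod}$ to be continuous.

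Next I would note that each coordinate projection $\pi_\beta:\prod_\alpha K_\alpha\twoheadrightarrow K_\beta$ is continuous by definition of the product topology and affine by the very definition of the convex structure, hence a morphism in $\bf CCH$.

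Finally, for the universal property, given any $L\in\bf CCH$ together with a family of morphisms $f_\alpha:L\to K_\alpha$, the universal property in $\bf Top$ produces a unique continuous map $f:L\to\prod_\alpha K_\alpha$ satisfying $\pi_\alpha\circ f=f_\alpha$ for all $\alpha$. It remains to verify that $f$ is affine. For $a,b\in L$ and $t\in[0,1]$, applying $\pi_\alpha$ to $f(t\cdot a+(1-t)\cdot b)$ and to $t\cdot f(a)+(1-t)\cdot f(b)$ gives $f_\alpha(t\cdot a+(1-t)\cdot b)=t\cdot f_\alpha(a)+(1-t)\cdot f_\alpha(b)$ on both sides, using that each $f_\alpha$ is affine and that $\pi_\alpha$ respects the coordinatewise convex structure; since two points of the product agreeing on every coordinate are equal, $f$ is affine.

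I do not expect any genuine obstacle here: the argument is essentially the standard product construction in $\bf Top$, augmented by the observation that ``coordinatewise'' convex combinations are continuous because each coordinate is. The only point that requires a line of care is the compatibility of the product topology with the convex structure, but this is handled by the factorization through coordinate projections described above.
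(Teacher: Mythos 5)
Your proposal is correct and follows essentially the same route as the paper: coordinatewise convex structure, compactness via Tychonoff, compatibility of the affine combination map checked coordinate by coordinate through the projections, and the universal property inherited from the topological product with affineness verified coordinatewise. Your write-up is in fact slightly more explicit than the paper's (in particular the factorization $\pi_\beta\circ A_t^{\prod}=A_t^{\beta}\circ(\pi_\beta\times\pi_\beta)$), but there is no substantive difference.
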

\begin{proof}
	A product convex structure is compatible if each factor has a compatible structure. It follows from the fact, that the product topology is the topology of pointwise convergence, and $(a,b)\rightarrow t\cdot a + (1-t) \cdot b$, $t\in [0,1]$, is continuous iff every $(a_\alpha, b_\alpha) \rightarrow t\cdot a_\alpha + (1-t) \cdot b_\alpha$ is, which is exactly our case.
	It is a standard fact, that the defined product is a compact Hausdorff space. Moreover, projection maps are continuous and affine.
	
	For every such $K\in \bf CCH$, that there exists a morphism $\varphi_\alpha: K \rightarrow K_\alpha$ in $\bf CCH$ for each element $K_\alpha\in \{K_\alpha\}$, define $\varphi: K \rightarrow \prod_\alpha K_\alpha$ as follows:
	$$
	\varphi(x):=(\varphi_\alpha(x)),\ \forall x\in K
	$$
	It is straightforward to check, that this map is a continuous affine one, and that $\varphi_\alpha=Pr_\alpha\circ\varphi$. Suppose, that there is a morphism $\psi:K\rightarrow \prod_\alpha K_\alpha$, such that $\pr_\alpha \circ \psi=\varphi_\alpha$ for all $\alpha$. Then for any $x\in K$, $\pr_\alpha(\psi(x))=\varphi_\alpha(x)$, so that $(\varphi(x))_\alpha=\varphi_\alpha(x)=(\psi(x))_\alpha$. Hence, $\varphi=\psi$, and $\varphi$ is a unique morphism with this property.
\end{proof}

\begin{proposition}
	\label{existence of inverse limits in CCH}
	For any small diagram $(\{K_\alpha\}, \calT)$, $\calT=\{T_{\alpha,\beta}: K_\alpha \rightarrow K_\beta,\ \alpha\leq \beta\}$ in $\bf CCH$  (a diagram = a functor from a poset to $\bf CCH$) there exists a projective limit. Hence, the category $\bf CCH$ is complete.
	
	Projective limit is defined as follows:
	$$
	\varprojlim (\{K_\alpha\}, \calT) = \left\{x\in \prod_\alpha K_\alpha: \forall T_{\alpha,\beta}\in \calT,\ x_\beta=T_{\alpha,\beta}(x_\alpha) \right\}
	$$
	where $x=(x_\alpha)$, and equipped with the induced topological and convex structure.
\end{proposition}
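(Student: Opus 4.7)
The plan is to construct the projective limit as the stated subset $L$ of the categorical product $\prod_\alpha K_\alpha$ (whose existence is given by the previous proposition), and then verify that it belongs to $\bf CCH$ together with the required universal property.

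First I would check that $L$ is an object of $\bf CCH$. Convexity is immediate: for $x,y\in L$ and $t\in[0,1]$, affinity of each $T_{\alpha,\beta}$ gives
\[
T_{\alpha,\beta}(t x_\alpha + (1-t) y_\alpha) = t T_{\alpha,\beta}(x_\alpha) + (1-t) T_{\alpha,\beta}(y_\alpha) = t x_\beta + (1-t) y_\beta,
\]
so $t x + (1-t) y\in L$. For closedness in $\prod_\alpha K_\alpha$, fix any $\alpha\leq\beta$ and consider the map $f_{\alpha,\beta}: \prod_\gamma K_\gamma \to K_\beta\times K_\beta$ sending $x$ to $(x_\beta, T_{\alpha,\beta}(x_\alpha))$. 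It is continuous (the components are compositions of projections with continuous maps), and since $K_\beta$ is Hausdorff, the diagonal $\Delta_\beta\subset K_\beta\times K_\beta$ is closed, whence $f_{\alpha,\beta}^{-1}(\Delta_\beta)$ is closed. Now $L$ is the intersection over all relations $\alpha\leq\beta$ of these closed sets, hence closed in the compact Hausdorff product, and therefore compact and Hausdorff. Compatibility of the topology with the convex structure is inherited from the product.

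Next I would define the limiting cone: let $p_\alpha := \pr_\alpha|_L : L\to K_\alpha$. Each $p_\alpha$ is continuous and affine as a restriction of a projection, and by the very definition of $L$ one has $T_{\alpha,\beta}\circ p_\alpha = p_\beta$ for all $\alpha\leq\beta$, so the cone is compatible with $\calT$.

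Finally, for the universal property, suppose $K\in \bf CCH$ is equipped with a compatible cone $\{\varphi_\alpha: K\to K_\alpha\}$, meaning $T_{\alpha,\beta}\circ\varphi_\alpha = \varphi_\beta$. By the product proposition, there is a unique continuous affine map $\varphi: K\to \prod_\alpha K_\alpha$ with $\pr_\alpha\circ\varphi=\varphi_\alpha$. The compatibility assumption forces $\pr_\beta(\varphi(x)) = \varphi_\beta(x) = T_{\alpha,\beta}(\varphi_\alpha(x)) = T_{\alpha,\beta}(\pr_\alpha(\varphi(x)))$ for all $x\in K$ and all $\alpha\leq\beta$, so $\varphi(K)\subseteq L$. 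Thus $\varphi$ corestricts to a continuous affine map $K\to L$ satisfying $p_\alpha\circ\varphi=\varphi_\alpha$. Uniqueness of $\varphi$ into $L$ follows from uniqueness into the product, since two maps into $L$ that agree after composition with every $p_\alpha$ also agree after composition with every $\pr_\alpha$. This proves $L$ is the projective limit, and since the diagram was arbitrary small, $\bf CCH$ is complete.

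No step of this argument is genuinely difficult; the only place requiring care is the proof that $L$ is closed, where one must remember to use Hausdorffness of the factors rather than just continuity of the $T_{\alpha,\beta}$.
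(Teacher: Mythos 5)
Your proof is correct and follows essentially the same route as the paper's: realize the limit as the indicated closed convex subset of the categorical product, use Hausdorffness of the factors together with continuity of the $T_{\alpha,\beta}$ to get closedness (you phrase this as preimages of the closed diagonal, the paper constructs a separating basic open neighbourhood --- the same argument), and then derive the universal property from that of the product. No gaps.
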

\begin{proof}
	A convex structure on a projective limit is well-defined due to the affinity of morphisms $T_{\alpha,\beta}$. 
	
	Let us prove that $\varprojlim \{K_\alpha\}$ is a closed subspace of $\prod_\alpha K_\alpha$. Let $x = (x_\alpha)$ be in $\prod_\alpha K_\alpha$, but not in $\varprojlim \{K_\alpha\}$, i.e. we can find $(\alpha, \beta): K_\alpha \rightarrow K_\beta$, s.t. $x_\beta\neq T_{\alpha,\beta}(x_\alpha)$. Since $K_\beta$ is Hausdorff, we can find disjoint open neighbourhoods $V_\alpha$ of $x_\alpha$ and $U_\beta$ of $T_{\alpha,\beta}(x_\alpha)$. Since $T_{\alpha,\beta}$ is continuous, $U_\alpha:=T_{\alpha,\beta}^{-1}(V_\beta)$ is an open neighbourhood of $x_\alpha$. Thus $U_\beta\times U_\alpha\times \prod_{\gamma\not\in \{\alpha,\beta\}} K_\gamma$
	is an open neighbourhood of $x$ in $\prod_\alpha K_\alpha$ that does not intersect with $\varprojlim (\{K_\alpha\}, \calT)$.

	Suppose we have such an object $K$ and a family of morphisms $\varphi_\alpha: K\rightarrow K_\alpha$, that $\varphi_\beta=T_{\alpha,\beta}\circ \varphi_\alpha$ iff $\alpha\leq \beta$. Define $\varphi: K\rightarrow \varprojlim K_\alpha$ by $\varphi(x)=(\varphi_\alpha(x))$. Then $(\pr_\alpha\circ \varphi)(x)=\pr_\alpha((\varphi_\alpha(x)))=\varphi_\alpha(x)$, hence $\pr_\alpha\circ \varphi=\varphi_\alpha$. Suppose that there is a morphism $\psi:K\rightarrow \varprojlim K_\alpha$ such that $\pr_\alpha \circ \psi=\varphi_\alpha$ for all $\alpha$. Then for any $x\in K$, $\pr_\alpha(\psi(x))=\varphi_\alpha(x)$, so that $(\varphi(x))_\alpha=\varphi_\alpha(x)=(\psi(x))_\alpha$. Hence $\varphi=\psi$, and $\varphi$ is a unique morphism with this property.
\end{proof}

Recall, that a unital $C^*$-subalgebra of a $C^*$-algebra $\calA$ is a subset of $\calA$ that includes identity element and appears to be a unital $C^*$-algebra with respect to the inherited multiplication, involution, and linear structure. Since the inclusion of such a subalgebra in $\calA$ is an injective unital $C^*$-homomorphism, which is inevitably isometric, the Banach structure of a subalgebra coincides with the inherited one.

For a $C^*$-algebra $\calA$ we can consider a diagram $\calC(\calA)$ in $\bf ucC^*_{in}$ of all unital commutative $C^*$-subalgebras of $\calA$ ordered by inclusion. Denote it by $\calC(\calA):=(\{\calA_\alpha\}, \subseteq)$. It is clear, that every inclusion of subalgebras is an injective unital $C^*$-homomorphism of $C^*$-algebras. 

Using the defined above functors $\Spec$ and $i\circ S$ we obtain a diagram in the category $\bf CH_{sur}$ and a diagram in $\bf CCH$. We denote them as $\Spec(\calC(\calA)):=(\{\Spec(\calA_\alpha)\}, \calR)$ and $S(\calC(\calA)):=(\{S(\calA_\alpha)\}, \calR_\#)$ respectively. For shortness, we shall denote $\Spec(\calA_\alpha)$ as $X_\alpha$, thus, in this notation, $\Spec(\calC(\calA)):=(\{X_\alpha\}, \calR)$.
The ordering is defined by the restriction maps: $X_\alpha\preceq X_\beta$ iff $\exists R_{\beta, \alpha}: X_\beta \twoheadrightarrow X_\alpha$, $R_{\beta, \alpha}(x_\beta):=x_\beta|_{\calA_\alpha}$; $S(\calA_\alpha)\preceq S(\calA_\beta)$ iff $\exists (R_{\beta, \alpha})_\#: S(\calA_\alpha) \twoheadrightarrow S(\calA_\beta)$, $(R_{\beta, \alpha})_\#(\mu_\beta):=\mu_\beta|_{\calA_\alpha}$. Note, that the restriction maps commute with the functor $\calP$: $(R_{\beta,\alpha})_\#\circ \calP=\calP \circ R_{\beta,\alpha}$.

Let us define the \textbf{quasi-state space} $QS(\calA)$ for a $C^*$-algebra $\calA$ as a projective limit of a diagram $(\{S(\calA_\alpha)\}, \calR_\#)$ in $\bf CCH$. A little bit later we prove, that elements of the quasi-state space are quasi-states in the sense of Definition \ref{quasi-state}.

Recall, that an element $a$ of a $C^*$-algebra is \textbf{self-adjoint} iff $a=a^*$. Denote the ordered Banach space of all self-adjoint elements of $\calA$ by $\calA^{sa}$ (Banach and order structures are inherited from $\calA$). It is known (see \cite{Alfsen}) that $\calA^{sa}$ is a Jordan Banach algebra (JB-algebra, for a definition see \cite{Alfsen}) with respect to the product, defined as an anticommutator $a\circ b:=\frac{1}{2}(a\cdot b+b\cdot a)$. In particular, it is a commutative non-associative unital Banach algebra over $\R$. This Jordan structure is defined in terms of the multiplicative structure of $\calA$, but it can be also recovered from the order structure of $\calA^{sa}$. The order on $\calA^{sa}$, defined by the Jordan product ($a \succeq b \iff \exists c\in \calA^{sa} \text{ s.t. } a-b=c\circ c$) coincides with the order, inherited from $\calA$.

Let us summarize some easy facts about self-adjoint part of an algebra in the following Lemma.
\begin{lemma}
	\label{lemma about self-adjoint part}
	Let $\calA$ be a unital $C^*$-algebra, $\calC(\calA)$ be a diagram (in $\bf ucC^*_{in}$) of all unital commutative $C^*$-subalgebras of $\calA$ (with inclusion relation). Then
	\begin{enumerate}
		\item For every $\calA_\alpha\in \calC(\calA)$ its self-adjoint part, $\calA_\alpha^{sa}$, is a Banach unital associative subalgebra of the JB-algebra $\calA^{sa}$.
		\item $\calA^{sa}$ is covered by $\{\calA_\alpha^{sa}$: $\calA_\alpha\in \calC(\calA)\}$.
		\item $S(\calA)$ is determined by $\calA^{sa}$.
		\item $\calC(\calA)$ is determined by $\calA^{sa}$.
	\end{enumerate}
\end{lemma}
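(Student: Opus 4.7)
The plan is to handle the four claims in order, treating (1) and (2) as immediate verifications and (3), (4) as reconstruction statements that require more structural input.

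For (1), commutativity of $\calA_\alpha$ implies that for any $a,b\in \calA_\alpha^{sa}$ the product $ab$ is self-adjoint and equals $ba$, so the Jordan product $a\circ b = \frac{1}{2}(ab+ba)$ coincides with the ordinary associative product and stays inside $\calA_\alpha^{sa}$. Together with norm-closedness and the presence of the unit, this gives the required structure. Statement (2) is equally short: for any $a\in\calA^{sa}$ the algebra $C^*(1,a)$ is a unital commutative $C^*$-subalgebra of $\calA$ containing $a$, so the family $\{\calA_\alpha^{sa}\}$ covers $\calA^{sa}$.

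For (3), I would use the Cartesian decomposition $\calA = \calA^{sa}\oplus i\calA^{sa}$, which shows that every $\C$-linear functional on $\calA$ is uniquely determined by its restriction to $\calA^{sa}$ via $\mu(a+ib)=\mu(a)+i\mu(b)$. Positivity $\mu(x^*x)\geq 0$ reduces to nonnegativity of $\mu$ on the positive cone of $\calA$, which coincides with $\{a\circ a:a\in\calA^{sa}\}$ and is therefore intrinsic to the JB-algebra structure of $\calA^{sa}$; the normalization $\mu(1)=1$ is encoded by the distinguished unit element. Hence the data cutting out $S(\calA)$ inside the real dual of $\calA^{sa}$ are determined by $\calA^{sa}$ alone.

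For (4), the task is to identify intrinsically which subsets $B\subseteq \calA^{sa}$ arise as $\calA_\alpha^{sa}$. By (1) any such $B$ is a norm-closed unital Jordan subalgebra on which the Jordan product is associative. Conversely, starting from such a $B$, I would show that $C:=B+iB$ is a unital commutative $C^*$-subalgebra with $C^{sa}=B$. Closedness under involution is immediate, and the key step is pairwise commutativity of elements of $B$ in $\calA$. Expanding $(a\circ b)\circ c = a\circ(b\circ c)$ yields the identity $[b,[a,c]]=0$ for all $a,b,c\in B$; specializing $b=a$ gives the Kleinecke--Shirokov hypothesis $[a,[a,c]]=0$, which forces $[a,c]$ to be quasi-nilpotent in $\calA$. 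Since $[a,c]$ is anti-self-adjoint, the element $i[a,c]$ is self-adjoint with real spectrum of zero radius, and therefore vanishes by the $C^*$-identity. Thus $B+iB$ is closed under multiplication and commutative, and the bijection $\calA_\alpha\leftrightarrow \calA_\alpha^{sa}$ between $\calC(\calA)$ and these Jordan-associative subalgebras, together with the obvious transfer of the inclusion order, recovers the whole diagram.

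The genuine obstacle is precisely the converse in (4): the associator identity alone yields only the weaker vanishing $[b,[a,c]]=0$, and upgrading this to $[a,c]=0$ is where specifically $C^*$-algebraic input is required (Kleinecke--Shirokov for the quasi-nilpotency, and the $C^*$-identity to turn self-adjointness plus zero spectral radius into $\|[a,c]\|=0$). Everything else in the lemma is essentially bookkeeping once this commutativity step is cleared.
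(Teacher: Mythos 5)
Your proposal is correct, and for parts (1)--(3) it follows essentially the same route as the paper: (1) Jordan product $=$ ordinary product on commuting self-adjoint elements, (2) the singly generated subalgebra $C^*(1,a)$, (3) the Cartesian decomposition $\mu(a+ib)=\mu(a)+i\mu(b)$ (you additionally note that positivity and normalization are visible in the JB-structure, which the paper does not bother to spell out). The interesting divergence is in part (4). Both you and the paper characterize the sets $\calA_\alpha^{sa}$ as the norm-closed unital associative (Jordan) subalgebras $B\subseteq\calA^{sa}$ and reconstruct $\calA_\alpha$ as $B+iB$; but the paper, after citing Theorem 1.12 of Alfsen--Shultz to identify $B$ abstractly with $C_\R(X)$, simply asserts that the multiplication on $B+iB$ is ``commutative due to commutativity of $f_k$'' --- i.e., it takes for granted that elements of an associative Jordan subalgebra pairwise commute \emph{in $\calA$}, which is exactly the point that needs proof (the abstract isomorphism with $C_\R(X)$ does not by itself give operator commutativity in the ambient algebra). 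You correctly isolate this as the genuine obstacle and close it: the associator identity gives $[b,[a,c]]=0$, hence $[a,[a,c]]=0$, Kleinecke--Shirokov makes $[a,c]$ quasi-nilpotent, and since $[a,c]$ is anti-self-adjoint (hence normal) its norm equals its spectral radius, so $[a,c]=0$. This is a more self-contained argument that actually repairs the one soft spot in the paper's proof; what the paper's route buys instead is the explicit model $C_\R(X)$, which it uses to identify $B+iB$ with $C_\C(X)$ and to run an explicit Cauchy-sequence argument for completeness of $B+iB$ --- a point you leave implicit (it is routine, since the real and imaginary parts $x\mapsto\frac{x+x^*}{2}$, $x\mapsto\frac{x-x^*}{2i}$ are continuous projections, but it is worth a sentence).
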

\begin{proof}
	Let us prove these assertions.
	\begin{enumerate}
		\item By Gelfand duality, $\calA_\alpha\simeq C(\Spec(\calA_\alpha))$, where $\simeq$ is an isomorphism of $C^*$-algebras. The self-adjoint part in this identification corresponds to the set of all continuous real-valued functions on $\Spec(\calA_\alpha)$. It obviously has a structure of a unital commutative associative algebra over $\R$, and, being equipped with an inherited norm, it becomes a Banach algebra: the condition $||a\cdot b||\leq ||a||\cdot||b||$ holds since it holds for $\calA$, completeness follows from the fact, that the space of all continuous real-valued functions on a Hausdorff compact space is complete w.r.t. uniform topology, which is exactly the topology determined by the norm $||\cdot||$. $\calA_\alpha^{sa}$ is a Banach subalgebra of $\calA^{sa}$, since it is a Banach subalgebra of $\calA$, and $x\circ y=x\cdot y$ for commutative elements of $\calA^{sa}$.
		\item Every $a\in \calA^{sa}$ generates a unital commutative $C^*$-subalgebra of $\calA$ (it can be defined as a completion of the set of all polynomials in $a$ over $\C$). Denote it by $\calA_\alpha$. It is clear, that $a\in \calA_\alpha^{sa}$.		
		\item Every element $\mu$ of $S(\calA)$ is defined by its values on $\calA$. Note, that any $f\in \calA$ can be represented as a linear combination of two self-adjoint elements: $a=\frac{a+a^*}{2}-i\frac{i(a-a^*)}{2}$. Then $\mu(a)=\mu(\frac{a+a^*}{2})-i\mu(i\frac{a-a^*}{2})$. Hence $\mu$ is determined by its values on $\calA^{sa}$. 
		
		The weak$^*$-topology on $S(\calA)$ is defined as the weakest one, such that for every $f\in \calA$, $\mu \rightarrow \mu(f)$ is a continuous functional on $S(\calA)$. Since $\mu(f)=\mu(a)+i\mu(b)$ for some $a,b\in \calA^{sa}$, the weak$^*$-topology can be equivalently defined as the weakest topology, such that for every $a\in \calA^{sa}$, $\mu \rightarrow \mu(a)$ is a continuous functional.
		
		The inverse statement ($\calA^{sa}$ can be recovered from $S(\calA)$ as an ordered Banach space or, equivalently, as a JB-algebra) is also true, but we do not prove it here (in case of interest, see \cite{Alfsen}).		
		\item 
		Consider associative unital Banach subalgebras of $\calA^{sa}$. By Theorem 1.12 of \cite{Alfsen}, each of them is isometrically isomorphic to the algebra (and ordered Banach space) of all real-valued continuous functions (equipped with the uniform norm) on some Hausdorff compact space $X$.
		Let $F\subseteq \calA^{sa}$ be such a subalgebra. We wish to ensure, that the minimal $C^*$-subalgebra of $\calA$ containing $F$ (let us denote it as $C(F)$) is a unital commutative $C^*$-subalgebra of $\calA$ with the self-adjoint part $F$. As a $\C$-vector space, $C(F)$ contains the space $\{f_1+i f_2: f_1, f_2\in F\}\subseteq \calA$. By axiomatic definition of a $C^*$-algebra, the involution, the multiplication, and the $C^*$-norm are uniquely defined on this set, making it a unital commutative $C^*$-algebra:
		\begin{enumerate}
			\item $(f_1+i f_2)^*=f_1-i f_2$ due to self-adjointness of $f_1$, $f_2$.
			\item $(f_1+i f_2) (f_3+ i f_4)=f_1 f_3 + i f_2 f_3 + i f_1 f_4 - f_2 f_4$ due to the distributive law. This multiplication is commutative due to commutativity of $f_k$, $k=1,...4$.
			\item $||(f_1+i f_2)(f_1+i f_2)^*||=||f_1^2+f_2^2||=||f_1+i f_2||^2$ by definition of $C^*$-norm. Due to positivity of $f_1^2+f_2^2$, $||f_1+i f_2||=||(f_1^2+f_2^2)^{\frac{1}{2}}||$.
			\item $\{f_1+i f_2: f_1, f_2\in F\}$ is complete w.r.t. this norm. Let $(f_k+i g_k)$ be a Cauchy sequence. Since $||((f_k - f_n)^2+(g_k-g_n)^2)^{\frac{1}{2}}||=||((f_k - f_n)^2+(g_k-g_n)^2)||^{\frac{1}{2}}$, $||(f_k - f_n)^2+(g_k-g_n)^2||<\varepsilon^2$ implies $||(f_k - f_n)^2||<\varepsilon^2$, $||(g_k - g_n)^2||<\varepsilon^2$. Using the identity $||a||^2=||a^2||$, which is satisfied in a JB-algebra, $||f_k - f_n||<\varepsilon$, $||g_k - g_n||<\varepsilon$, hence $(f_k)$, $(g_k)$ are Cauchy sequences in $F$. Let $f, g \in F$ be their respective limits. Then it follows from the inequality $||(f-f_k)^2+(g-g_k)^2||\leq ||(f-f_k)^2|| + ||(g-g_k)^2||= ||(f-f_k)||^2 + ||(g-g_k)||^2$, that $f+i g$ is a limit for $(f_k+i g_k)$.
		\end{enumerate}
		Due to the uniqueness, the defined commutative $C^*$-structure on $\{f_1+i f_2: f_1, f_2\in F\}\subseteq \calA$ coincides with the inherited one from $\calA$. Moreover, it coincides with the $C^*$-structure of the standard Banach complexification of $C_\R(X)\simeq F$, which is isometrically isomorphic to $C_\C(X)$. It is clear in this representation, that $F$ is the set of its self-adjoint elements. Combining this result with the first statement of this Lemma, we conclude, that every $\calA_{\alpha}\in \calC(\calA)$ can be constructed this way. The inclusions of $C^*$-subalgebras correspond to the inclusions of their self-adjoint parts.
	\end{enumerate}
\end{proof}

\begin{proposition}
	The quasi-state space is a set of quasi-states in the sense of Definition \ref{quasi-state} equipped with the natural (element-wise) convex structure and the projective topology: the weakest (coarsest) topology, such that for every unital commutative $C^*$-subalgebra $\calA_\alpha\subseteq \calA$, all linear functionals of the form $\mu_\alpha\rightarrow \mu_\alpha(f)$ for all $f\in \calA_\alpha$ are continuous. Here $\mu_\alpha$ is a restriction of $\mu\in QS(\calA)$ to $\calA_\alpha$.
\end{proposition}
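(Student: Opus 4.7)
The plan is to identify the projective limit $\varprojlim(\{S(\calA_\alpha)\},\calR_\#)$ set-theoretically with the quasi-states of Definition \ref{quasi-state}, and then observe that the convex and topological structures match up by construction. Recall that by Proposition \ref{existence of inverse limits in CCH}, an element of the projective limit is a family $(x_\alpha)\in \prod_\alpha S(\calA_\alpha)$ satisfying the compatibility condition $x_\beta|_{\calA_\alpha}=x_\alpha$ whenever $\calA_\alpha\subseteq \calA_\beta$. Since every element of $\calA^{sa}$ generates a unital commutative $C^*$-subalgebra by Lemma \ref{lemma about self-adjoint part}(2), the family $(x_\alpha)$ determines a well-defined value on every self-adjoint element.

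For the forward direction, given $(x_\alpha)$ in the projective limit, I would define $\mu:\calA\to\C$ as follows. For $f\in\calA$, write $f=a+ib$ with $a=\frac{f+f^*}{2}$ and $b=\frac{f-f^*}{2i}$ both self-adjoint, pick any $\calA_{\alpha(a)}, \calA_{\alpha(b)}\in\calC(\calA)$ containing $a$ and $b$ respectively, and set $\mu(f):=x_{\alpha(a)}(a)+i\,x_{\alpha(b)}(b)$. Well-definedness follows from the compatibility condition: if $a$ lies in both $\calA_{\alpha}$ and $\calA_{\alpha'}$, the unital $C^*$-subalgebra $\calA_\gamma$ generated by $a$ is contained in both, and the compatibility forces $x_\alpha(a)=x_\gamma(a)=x_{\alpha'}(a)$. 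I then verify the four defining conditions of a quasi-state. Linearity on each $\calA_\alpha$ reduces to $\C$-linearity of the state $x_\alpha$: indeed, for any $f\in\calA_\alpha$ with decomposition $f=a+ib$, both $a,b\in\calA_\alpha$, so $\mu(f)=x_\alpha(a)+ix_\alpha(b)=x_\alpha(f)$. The identity $\mu(a+ib)=\mu(a)+i\mu(b)$ for $a,b\in\calA^{sa}$ is immediate from the definition. Positivity $\mu(a^*a)\ge 0$ follows because $a^*a\in\calA^{sa}$ lies in some $\calA_\alpha$ on which $x_\alpha$ is a (positive) state. Normalization $\mu(1)=1$ holds since $x_\alpha(1)=1$ for all $\alpha$.

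For the reverse direction, given a quasi-state $\mu$ in the sense of Definition \ref{quasi-state}, I would set $x_\alpha:=\mu|_{\calA_\alpha}$. Linearity on $\calA_\alpha$, positivity, and unitality are exactly the axioms in the definition, so each $x_\alpha\in S(\calA_\alpha)$; compatibility $x_\beta|_{\calA_\alpha}=x_\alpha$ holds automatically. The two constructions are mutually inverse because, restricted to any single commutative subalgebra, they agree with the state/restriction correspondence established just above. The element-wise convex structure on the projective limit corresponds under this bijection to the element-wise convex structure on quasi-states. Finally, the projective topology on $\varprojlim$ is by definition the subspace topology inherited from $\prod_\alpha S(\calA_\alpha)$, and since each $S(\calA_\alpha)$ carries the weak$^*$-topology, this coincides with the coarsest topology making every functional $\mu\mapsto\mu_\alpha(f)=\mu(f)$ continuous for $f\in\calA_\alpha$ and $\calA_\alpha\in\calC(\calA)$, which is the topology described in the statement.

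The only real subtlety is the well-definedness argument for $\mu$: one must simultaneously invoke Lemma \ref{lemma about self-adjoint part}(2) to produce a subalgebra for each self-adjoint element and use compatibility of the family $(x_\alpha)$ across overlapping subalgebras, together with the fact (seen through the unital commutative $C^*$-subalgebra generated by the element together with $1$) that any two subalgebras sharing an element in fact share the $C^*$-subalgebra generated by that element. Everything else is a direct unfolding of definitions.
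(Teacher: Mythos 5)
Your proposal is correct and follows essentially the same route as the paper: identify elements of the projective limit with quasi-states via the decomposition $f=a+ib$ and the covering of $\calA^{sa}$ by unital commutative $C^*$-subalgebras, then note that the convex structure and the projective topology match by construction. The only detail the paper makes explicit that you leave implicit is that in the reverse direction a positive unital linear functional on a unital $C^*$-algebra automatically has norm one, so the restrictions really land in $S(\calA_\alpha)=\Sp_1^+(\calA_\alpha^*)$; this is standard and does not affect correctness.
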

\begin{proof}
	By the definition of an element of $QS(\calA)$, its restriction to $\calA_\alpha$ should be a state on each $\calA_\alpha$. Since $\calA^{sa}$ is covered by $\{\calA^{sa}_\alpha: \calA_\alpha\in \calC(\calA)\}$, for any $a\in \calA$ there exists $\calA_\alpha\in \calC(\calA)$, s.t. $\mu(a^*a)=\mu|_{\calA_\alpha}(a^*a)\geq 0$, $\mu(1)=\mu|_{\calA_\alpha}(1)=1$. 
	Moreover, since by Lemma \ref{lemma about self-adjoint part}, $\calA^{sa}$ determines $\calC(\calA)$, and, in particular, determines each state space $S(\calA_\alpha)=S(\calA^{sa}_\alpha)$, every element of $QS(\calA)$ is uniquely defined by its values on $\calA^{sa}$.
	We can establish an isomorphism between $QS(\calA)$ and the set of quasi-states in the sense of Definition \ref{quasi-state} extending $\mu\in QS(\calA)$ from $\calA^{sa}$ to $\calA$ by the formula: $\mu(a+ib)=\mu(a)+i\mu(b)$, $\forall a,b\in \calA^{sa}$. Thus, we conclude that every element of $QS(\calA)$ can be thought of as a quasi-state in the sense of Definition (\ref{quasi-state}).
	
	Recall, that a quasi-state on a $C^*$-algebra $\calA$ is a map $\mu: \calA \rightarrow \C$, such that it is linear on all commutative subalgebras, satisfies $\mu(a+ib)=\mu(a)+i\mu(b)$ for all self-adjoint $a,b\in \calA$, $\mu(a^*a)\geq 0$ for all $a\in \calA$, and $\mu(1)=1$. If we restrict quasi-state on any unital commutative $C^*$-subalgebra $\calA_\alpha$, we obtain a positive linear functional s.t. $\mu(1)=1$. It follows from positivity, that $\mu_\alpha(f)\leq 1$, if $||f||\leq 1$ and $f\in \calA_\alpha^{sa}$ (since $1-f\geq 0$ in this case). Hence, $||\mu_\alpha||=1$, $\mu_\alpha$ is a state on $\calA_\alpha$, and $(\mu_\alpha)\in \prod S(\calA_\alpha)$. It is straightforward to check that $(\mu_\alpha)\in \varprojlim \{S(\calA_\alpha)\}$.
	
	Since $QS$ as a topological space is a projective limit of topological spaces $S(\calA_\alpha)$, the topology on it is defined as the weakest one, s.t. all projections (restriction maps in our case) are continuous. Each $S(\calA_\alpha)$ is equipped with the weak$^*$-topology, hence, by definition, $\mu_\alpha\rightarrow \mu_\alpha(f)$ for $f\in \calA_\alpha$ should be continuous.
	
	Convex structure on $QS(\calA)$ is inherited from $\prod S(\calA_\alpha)$, and, due to the fact, that $\calA^{sa}$ is covered by $\{\calA^{sa}_\alpha: \calA_\alpha\in \calC(\calA)\}$, it coincides with the natural (element-wise) one convex structure:
	$$
	(t \mu+ (1-t)\nu)(f) = t \mu_\alpha (f) + (1-t) \nu_\alpha (f)=t \mu(f)+ (1-t) \nu(f),\ \forall t\in[0,1]
	$$
	for $f\in \calA^{sa}_\alpha\subseteq \calA^{sa}$.
\end{proof}
Note, that it follows directly from the definition, that $QS(\calA)$ is a compact convex Hausdorff space.
\begin{proposition}
	$QS(\calA)$ is equipped with the weakest topology such that $\forall f\in \calA$, $\mu\rightarrow \mu(f)$ is a continuous functional.
\end{proposition}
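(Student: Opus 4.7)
The plan is to establish the equality of topologies by a double inclusion. Let $\tau_{proj}$ denote the topology on $QS(\calA)$ described in the preceding proposition, i.e.\ the weakest topology making all the functionals $\mu \mapsto \mu_\alpha(f) = \mu|_{\calA_\alpha}(f)$ continuous (as $\calA_\alpha$ ranges over unital commutative $C^*$-subalgebras of $\calA$ and $f$ ranges over $\calA_\alpha$), and let $\tau_{ev}$ denote the weakest topology making all evaluations $\mu \mapsto \mu(f)$ continuous for every $f \in \calA$. I want to show $\tau_{proj} = \tau_{ev}$.

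For the inclusion $\tau_{proj} \subseteq \tau_{ev}$, I would observe that $\tau_{proj}$ is generated by a strict subfamily of the generators of $\tau_{ev}$. Indeed, for $f \in \calA_\alpha$ one has $\calA_\alpha \subseteq \calA$ and $\mu_\alpha(f) = \mu(f)$, so each generator of $\tau_{proj}$ is literally a generator of $\tau_{ev}$. Hence $\tau_{proj}$ is coarser.

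For the converse inclusion $\tau_{ev} \subseteq \tau_{proj}$, I need to verify that for every $f \in \calA$, the map $\mu \mapsto \mu(f)$ is $\tau_{proj}$-continuous. The key is to decompose $f = a + ib$ with $a := (f+f^*)/2$ and $b := (f-f^*)/(2i)$ both self-adjoint. By the quasi-state axiom $\mu(a+ib) = \mu(a) + i\mu(b)$, so it suffices to handle $\mu \mapsto \mu(a)$ and $\mu \mapsto \mu(b)$ separately. By Lemma \ref{lemma about self-adjoint part}, $\calA^{sa}$ is covered by the self-adjoint parts of the $\calA_\alpha$'s; more concretely, each of $a, b$ generates (together with the unit) its own unital commutative $C^*$-subalgebra, so $a \in \calA_\alpha^{sa}$ and $b \in \calA_\beta^{sa}$ for suitable indices. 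Then $\mu \mapsto \mu(a)$ is a generator of $\tau_{proj}$ (applied to the subalgebra $\calA_\alpha$), hence $\tau_{proj}$-continuous, and similarly for $\mu \mapsto \mu(b)$. A finite linear combination of continuous $\C$-valued maps is continuous, giving the desired continuity of $\mu \mapsto \mu(f)$.

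The principal subtlety to watch for is that an arbitrary $f \in \calA$ need not itself lie in any commutative $C^*$-subalgebra of $\calA$ (a non-normal $f$ cannot), so one cannot naively write $\mu(f) = \mu_\alpha(f)$ for some single $\alpha$. This is precisely what forces the use of the self-adjoint decomposition together with the defining relation $\mu(a+ib) = \mu(a) + i\mu(b)$ in Definition \ref{quasi-state}; once this is invoked, the reduction to self-adjoint (hence normal, hence commutatively-embeddable) elements makes the argument routine.
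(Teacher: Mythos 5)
Your proof is correct and follows essentially the same route as the paper's: both directions are established by noting that the generators of the projective topology are evaluation functionals, and conversely by decomposing $f=a+ib$ into self-adjoint parts, placing each in a commutative subalgebra via Lemma \ref{lemma about self-adjoint part}, and invoking the quasi-state relation $\mu(a+ib)=\mu(a)+i\mu(b)$. Your explicit remark about non-normal elements is a nice clarification of why the decomposition is needed, but the substance matches the paper's argument.
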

\begin{proof}
	By the definition of the quasi-state space, for every unital commutative $C^*$-subalgebra $\calA_\alpha\subseteq \calA$, all functionals of the form $\mu_\alpha\rightarrow \mu_\alpha(f)$ for $f\in \calA_\alpha$ are continuous. Since $\calA^{sa}$ is covered by self-adjoint parts of elements of $\calC(\calA)$, for any $a\in \calA^{sa}$ there exists a commutative unital $C^*$-subalgebra $\calA_\alpha$ of $\calA$, s.t. $a\in \calA^{sa}_\alpha$. Hence $\mu\rightarrow \mu(a)=\mu|_{\calA_\alpha}(a)$ is continuous for all $f\in \calA^{sa}$. Since every $f$ can be represented as $f=a+ib$, where $a,b\in \calA^{sa}$, $a\in \calA^{sa}_\alpha$, $b\in \calA^{sa}_\beta$ for some commutative unital $C^*$-subalgebras $\calA_\alpha$, $\calA_\beta$ of $\calA$, the functional 
	$$
	\mu\rightarrow \mu(f)=\mu(a)+i\mu(b)=\mu|_{\calA_\alpha}(a)+ i\mu|_{\calA_\beta}(b)
	$$
	is continuous (since it is a linear combination of two continuous functionals) for every $f\in \calA$.
	
	If $\forall f\in \calA$, $\mu\rightarrow \mu(f)$ is continuous, then for any commutative unital $C^*$-subalgebra $\calA_\alpha$, $f\in \calA_\alpha\subseteq \calA$ implies $\mu|_{\calA_\alpha}\rightarrow \mu|_{\calA_\alpha}(f)=\mu(f)$ is continuous.
\end{proof}
\begin{corollary}
	\label{topologies on S and QS coincide} 
	Topology on $S(\calA)$, induced by the inclusion $S\subseteq QS(\calA)$, coincides with the weak$^*$-topology.
\end{corollary}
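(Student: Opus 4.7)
The plan is to observe that this corollary is essentially immediate from the preceding proposition, which characterizes the topology on $QS(\calA)$ as the weakest topology making every evaluation map $\mu\mapsto \mu(f)$ continuous, for $f\in\calA$. Since the weak$^*$-topology on $S(\calA)$ is defined in exactly the same way (as the weakest topology on $S(\calA)$ making all evaluations at elements of $\calA$ continuous), all that is required is a careful comparison of the subspace topology on $S(\calA)\subseteq QS(\calA)$ with the weak$^*$-topology.

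First I would recall that a subbasis for the topology on $QS(\calA)$ is given by sets of the form $\calU_{f,V} := \{\mu\in QS(\calA) : \mu(f)\in V\}$, where $f\in\calA$ and $V\subseteq\C$ is open; this follows directly from the preceding proposition. Then the subspace topology on $S(\calA)$ has as subbasis the sets $\calU_{f,V}\cap S(\calA) = \{\mu\in S(\calA): \mu(f)\in V\}$. But by definition, this is precisely a subbasis for the weak$^*$-topology on $S(\calA)$. Thus the two topologies coincide.

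To present it even more succinctly, I would note that the inclusion $S(\calA)\hookrightarrow QS(\calA)$ sends a state $\mu$ to the family of its restrictions $(\mu|_{\calA_\alpha})_\alpha$; both the target topology (on $QS(\calA)$, restricted to $S(\calA)$) and the weak$^*$-topology on $S(\calA)$ are initial topologies with respect to the same family of evaluation functionals $\mu\mapsto\mu(f)$, $f\in\calA$, and therefore must agree by the universal property of initial topologies.

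There is no real obstacle here; this is a bookkeeping lemma whose whole content has already been established in the preceding proposition. The only care needed is to make explicit that every $f\in\calA$ (not just self-adjoint $f$, and not just $f$ lying in a distinguished commutative subalgebra) gives a continuous evaluation on $QS(\calA)$, so that the full collection of weak$^*$-subbasic sets on $S(\calA)$ is recovered as traces of open sets from $QS(\calA)$.
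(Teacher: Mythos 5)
Your proposal is correct and follows essentially the same route as the paper's own proof: both identify the topology on $QS(\calA)$ (via the preceding proposition) as the initial topology with respect to the evaluations $\mu\mapsto\mu(f)$, $f\in\calA$, and observe that restricting this family to $S(\calA)$ yields exactly the defining subbasis of the weak$^*$-topology. Your version merely spells out the subbasis comparison that the paper leaves implicit.
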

\begin{proof}
	$QS(\calA)$ is equipped with the weakest topology, such that $\forall f\in \calA$, $\mu\rightarrow \mu(f)$ is a continuous map. It is exactly the definition of the weak$^*$-topology on $S(\calA)$.
\end{proof}

Let us review the picture. 
\begin{itemize}
	\item Both $S(\calA)$ and $QS(\calA)$ are compact convex spaces for any unital $C^*$-algebra $\calA$. There is a natural inclusion $S(\calA)\subseteq QS(\calA)$, such that $S(\calA)$ is a compact convex subset of $QS(\calA)$.
	\item $QS(\calA)$ is a projective limit of $(\{S(\calA_\alpha)\}, \calR_\#)$ in the category $\bf CCH$ of compact convex spaces. In some sense this result is dual to the result of \cite{Heunen3}, where the injective limit of $(\{\calA_\alpha\}, \subseteq)$ is described in the category of partial $C^*$-algebras (see its definition there). We do not formalize this duality, since it is not clear how to axiomatically describe $QS$-spaces.
	\item Obviously, the state space $S(\calA)$ contains not less information than the corresponding quasi-state space $QS(\calA)$. The following picture illustrates the relation between the objects. The arrow $\rightsquigarrow$ means \textquotedblleft the right object is uniquely determined by the left one", $\leftrightsquigarrow$ means \textquotedblleft both the right and the left objects determine each other in the unique way".
	
	\begin{tikzcd}
		S(\calA)
		\arrow [leftrightsquigarrow] {r}  {}
		\arrow[hook]{d}{}
		& \calA^{sa}
		\arrow [squiggly]{r}{}
		& (\{\calA^{sa}_\alpha\}, \subseteq)
		\arrow [leftrightsquigarrow]{dl}{}
		\\
		QS(\calA)
		& (\{\calA_\alpha\}, \subseteq)
		\arrow [squiggly]{l}{}
	\end{tikzcd}
	
	In the case $S(\calA)=QS(\calA)$, all entities in the picture uniquely determine each other. It is not clear for the author, is there an arrow $QS(\calA)\rightsquigarrow (\{\calA_\alpha\}, \subseteq)$ or not in the general case (i.e. is it possible to recover the diagram of all unital commutative $C^*$-subalgebras knowing only the quasi-state space of an algebra?).
\end{itemize}

\section{Wasserstein distances for abelian $C^*$-subalgebras}
\sectionmark{Wasserstein distances for abelian $C^*$-subalgebras}

Let $\calA$ be a unital $C^*$-algebra. Define
\begin{itemize}
	\item $\calC(\calA):=(\{\calA_\alpha\}, \subseteq)$ as a diagram in $\bf ucC^*_{in}$ of all unital commutative $C^*$-subalgebras of $\calA$ ordered by inclusion,
	\item $\Spec(\calC(\calA)):=(\{\Spec(\calA_\alpha)\}, \calR)$ as a diagram in $\bf CH_{sur}$ of the Gelfand spectra, ordered by restriction maps: $R_{\beta,\alpha}: \Spec(\calA_\beta)\twoheadrightarrow \Spec(\calA_\alpha)$, $R_{\beta,\alpha}(\varphi_\beta)=\varphi_\alpha$, which are defined for any ordered pair $\calA_\alpha\subseteq \calA_\beta$ from $\calC(\calA)$. For shortness, we shall denote $\Spec(\calA_\alpha)$ as $X_\alpha$, thus, $\Spec(\calC(\calA)):=(\{X_\alpha\}, \calR)$,
	\item $S(\calC(\calA)):=(\{S(\calA_\alpha)\}, \calR_\#)$ as a diagram in $\bf CCH$ of the state spaces, ordered by restriction maps: $(R_{\beta,\alpha})_\#: S(\calA_\beta)\twoheadrightarrow S(\calA_\alpha)$, $(R_{\beta,\alpha})_\#(\mu_\beta)=\mu_\alpha$, which are defined for any ordered pair $\calA_\alpha\subseteq \calA_\beta$ from $\calC(\calA)$. As it was discussed earlier, $S(\calA_\alpha)\simeq\calP(X_\alpha)$. By abuse of notation, we shall denote by the same letter a state (functional) defined on $\calA_\alpha$ and its corresponding representation as a measure on $X_\alpha$. As it has been shown earlier, $(R_{\beta,\alpha})_\#(\mu_\beta)=\mu_\beta\circ R_{\beta,\alpha}^{-1}$.
\end{itemize}
We are going to use the theory of $\Lip$-seminorms on $C^*$-algebras (or, more generally, on order-unit spaces), which was developed by Rieffel (\cite{Rieffel1}, \cite{Rieffel2}, \cite{Rieffel3}). The idea is to define an abstract analogue of a Lipschitz gauge axiomatically. 
Let $\calA$ be a unital $C^*$-algebra, $L: \calA^{sa} \rightarrow [0,+\infty]$ be an abstract analogue of a Lipschitz gauge. Define the following notation:
\begin{itemize}
	\item $B(\calA):=\{a\in \calA^{sa}: L(a)<+\infty\}$
	\item $B_1(\calA):=\{a\in \calA^{sa}: L(a)=1\}$
	\item $N(\calA):=\{a\in \calA^{sa}: L(a)=0\}$
\end{itemize}
If it does not lead to a confusion, we shall use $B$, $B_1$, $N$ instead of $B(\calA)$, $B_1(\calA)$ and $N(\calA)$ respectively.

Let us provide the following definition.
\begin{definition}
	$L: \calA^{sa} \rightarrow [0,+\infty]$ is an \textbf{$L$-seminorm} on a unital $C^*$-algebra $\calA$ iff it is
	\begin{enumerate}
		\item absolutely homogeneous: $L(af)=|a|L(f)$, for any $a\in \R$, $f\in \calA^{sa}$,
		\item subadditive: $L(f+g)\leq L(f)+L(g)$, for any $f, g\in \calA^{sa}$,
		\item $L(1)=0$,
		\item $B(\calA)$ separates points in $S(\calA)$: for any two distinct states $\mu,\nu\in S(\calA)$ there exists $f\in B(\calA)$  s.t. $\mu(f)\neq \nu(f)$.
	\end{enumerate}
\end{definition}

\begin{remark}
	We are going to use the term $\Lip$-seminorm in the same sense as Rieffel do. Since the axioms above are weaker then the axioms of $\Lip$-seminorm, we gave another name for this object.
\end{remark}

\begin{lemma}
	\label{separation of points}
	The property \textquotedblleft$B(\calA)$ separates points in $S(\calA)$" is equivalent to \textquotedblleft$B(\calA)$ is a weak dense subspace of $\calA$", where weak density means $S(\calA)=\Sp_1^+(B(\calA)^*)$ ($B(\calA)$ is assumed to be equipped with the topology induced by the inclusion in $\calA$).
\end{lemma}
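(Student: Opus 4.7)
The plan is to establish both implications by showing that each is equivalent to the norm density of $B(\calA)$ in $\calA^{sa}$. Denote by $r: S(\calA) \to \Sp_1^+(B(\calA)^*)$, $r(\mu) := \mu|_{B(\calA)}$, the restriction map. The direction ``weak density implies separation'' is immediate: if $r$ is a bijection, it is in particular injective, so two distinct states restrict to distinct functionals on $B(\calA)$.

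For the converse, I would first show that separation of $S(\calA)$ by $B(\calA)$ is equivalent to norm density of $B(\calA)$ in $\calA^{sa}$. Density trivially implies separation by continuity of states. Conversely, if $B(\calA)$ is not norm dense, the Hahn-Banach theorem produces a nonzero bounded $\R$-linear functional $\psi$ on $\calA^{sa}$ vanishing on $\overline{B(\calA)}$. Extending complex-linearly to a self-adjoint functional on $\calA$ and applying the Jordan decomposition yields $\psi = \psi_+ - \psi_-$ with $\psi_\pm$ positive. Since $1 \in B(\calA)$ (because $L(1) = 0$), the equation $\psi(1) = 0$ forces $\psi_+(1) = \psi_-(1) =: c$; but $c = 0$ would give $\|\psi_\pm\| = \psi_\pm(1) = 0$ and hence $\psi = 0$, a contradiction. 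Thus $c > 0$, and $\psi_+/c$, $\psi_-/c$ are two distinct states of $\calA$ agreeing on $B(\calA)$, contradicting separation.

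It remains to verify that norm density of $B(\calA)$ in $\calA^{sa}$ implies $r$ is a bijection onto $\Sp_1^+(B(\calA)^*)$. Well-definedness: for $\mu \in S(\calA)$ one has $r(\mu)(1) = 1$, and for self-adjoint $f \in B(\calA)$ with $\|f\| \le 1$ the inequality $-1 \le f \le 1$ gives $|r(\mu)(f)| \le 1$, so $\|r(\mu)\| = 1$; positivity is inherited from $\mu$. Injectivity of $r$ follows from density and continuity. The main step is surjectivity: given $\varphi \in \Sp_1^+(B(\calA)^*)$, extend by uniform continuity to $\psi: \calA^{sa} \to \R$, then extend complex-linearly to $\mu: \calA \to \C$. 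The hard part is verifying positivity of $\psi$, since a positive $a \in \calA^{sa}$ need not admit an approximation by positive elements of $B(\calA)$. The fix is to take any approximating sequence $a_n \in B(\calA)$ with $a_n \to a$ and consider the shifted sequence $a_n + \|a_n - a\| \cdot 1 \in B(\calA)$; the standard $C^*$-algebra fact $c + \|c\| \cdot 1 \ge 0$ for self-adjoint $c$, applied to $c = a_n - a$, gives $a_n + \|a_n - a\| \cdot 1 \ge a \ge 0$, so positivity of $\varphi$ yields $\varphi(a_n) \ge -\|a_n - a\|$; passing to the limit gives $\psi(a) \ge 0$. The resulting $\mu$ is then a state restricting to $\varphi$, completing the bijection.
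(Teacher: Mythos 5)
Your proof is correct, but it takes a genuinely different route from the paper's. The paper's argument is a two-liner resting on one external fact: the restriction map $S(\calA)\to \Sp_1^+(B(\calA)^*)$ is \emph{always} surjective (by Hahn--Banach, every norm-one positive functional on the unital subspace $B(\calA)$ extends to a state of $\calA$, using that a norm-one extension fixing $1$ is automatically positive). Granting that, ``weak density'' means the restriction map is a bijection, which — given surjectivity — is equivalent to injectivity, which is verbatim the separation property. You instead avoid the unconditional surjectivity claim and route both implications through a third equivalent condition, norm density of $B(\calA)$ in $\calA^{sa}$: separation implies norm density via Hahn--Banach separation plus the Jordan decomposition of a hermitian functional into positive parts (normalizing by $\psi_\pm(1)$, which is legitimate since $1\in B(\calA)$ as $L(1)=0$), and norm density gives bijectivity of the restriction map by the unique uniformly continuous extension, with a correct hands-on positivity check via the shift $a_n+\|a_n-a\|\cdot 1\geq a$. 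What the paper's route buys is brevity; what yours buys is that the only extensions performed are canonical (from a dense subspace), plus the extra dividend that weak density of $B(\calA)$ is in fact equivalent to its norm density in $\calA^{sa}$ — at the cost of invoking the Jordan decomposition, a heavier tool than anything the paper uses. One small point worth making explicit in your surjectivity step: $\varphi\in \Sp_1^+(B(\calA)^*)$ satisfies $\varphi(1)=1$ (from $1\pm b\geq 0$ for $\|b\|\leq 1$, $b\in B(\calA)$, one gets $\varphi(1)\geq\|\varphi\|=1$), which is needed both for your bound $\varphi(a_n)\geq -\|a_n-a\|$ and for the extension $\mu$ to be a state; this is routine but currently implicit.
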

\begin{proof}
	If $B:=B(\calA)$ does not separate points, there are two $\mu, \nu \in S(\calA)$, $\mu\neq \nu$ s.t. $\mu(f)=\nu(f)$, $\forall f\in B$. Hence the restriction map $S(\calA)\rightarrow \Sp_1^+(B^*)$, $\mu\rightarrow \mu|_B$ is not injective, because different $\mu$ and $\nu$ has the same image.
	
	If $S(\calA)\neq \Sp_1^+(B^*)$ then $S(\calA)\rightarrow \Sp_1^+(B^*)$, $\mu\rightarrow \mu|_B$ is not injective (since it is surjective due to Hahn-Banach theorem, but not one-to-one). Hence there are two distinct points $\mu,\nu\in S(\calA)$ s.t. $\forall f\in B: \mu(f)=\nu(f)$, which contradicts with the separation of points. 
\end{proof}	

Sometimes it is useful to have a weaker definition of an abstract Lipschitz gauge:
\begin{definition}
	$L: \calA^{sa} \rightarrow [0,+\infty]$ is a \textbf{partially-defined $L$-seminorm} on a unital $C^*$-algebra $\calA$ if it is
	\begin{enumerate}
		\item absolutely homogeneous: $L(af)=|a|L(f)$, for any $a\in \R$, $f\in \calA^{sa}$,
		\item subadditive: $L(f+g)\leq L(f)+L(g)$, for any $f, g\in \calA^{sa}$,
		\item $L(1)=0$.
	\end{enumerate}
\end{definition}
Here we do not require any type of separation of points. Thus, it is possible for a partially-defined $L$-seminorm to be infinite everywhere except $1\cdot \R$. 
The important (but obvious) fact is that a restriction of any $L$-seminorm from $\calA$ to any unital $C^*$-subalgebra $\calA_\alpha$ of $\calA$ is a partially-defined $L$-seminorm on $\calA_\alpha$.

The following map $d_L:S(\calA)\times S(\calA) \rightarrow [0,+\infty]$
$$
d_L(\mu,\nu):=\sup\{|\mu(f)-\nu(f)|: L(f)\leq 1, f\in \calA^{sa}\}
$$
is called a \textbf{spectral distance} on $S(\calA)$. Let us review some known facts about this map.
Recall that a \textbf{pseudo-distance function} is the same thing as a distance function, except it may vanish on pairs of non-equal points.
\begin{proposition}
	\label{d_L is distance on S}
	If $L$ is a partially-defined seminorm on a unital $C^*$-algebra $\calA$, then $d_L$ is a $[0,+\infty]$-valued lower semi-continuous pseudo-distance function on $S(\calA)$. If $L$ is an $L$-seminorm, $d_L$ is $[0,+\infty]$-valued distance function on $S(\calA)$.
\end{proposition}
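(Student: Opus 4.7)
The plan is to check the pseudo-distance axioms directly from the definition of $d_L$, deduce lower semi-continuity by writing $d_L$ as a supremum of weak$^*$-continuous functions, and then handle strict positivity via the separation property supplied by Lemma \ref{separation of points}.

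First, I would verify the three pseudo-distance axioms. Symmetry is immediate from the absolute value, and $d_L(\mu,\mu)=0$ follows since every summand $|\mu(f)-\mu(f)|$ vanishes. For the triangle inequality, for each $f\in\calA^{sa}$ with $L(f)\le 1$ one has the pointwise estimate $|\mu(f)-\nu(f)|\le |\mu(f)-\eta(f)|+|\eta(f)-\nu(f)| \le d_L(\mu,\eta)+d_L(\eta,\nu)$; taking the supremum over such $f$ on the left gives the inequality in $[0,+\infty]$ with the usual conventions for the value $+\infty$.

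For lower semi-continuity with respect to the (product) weak$^*$-topology, note that for every fixed $f\in\calA^{sa}$ the map $(\mu,\nu)\mapsto |\mu(f)-\nu(f)|$ is weak$^*$-continuous by the very definition of the weak$^*$-topology on $S(\calA)$. Hence $d_L$ is a supremum of a family of continuous real-valued functions on $S(\calA)\times S(\calA)$, and any such supremum is lower semi-continuous with values in $[0,+\infty]$.

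For the second assertion, assume $L$ is an $L$-seminorm and take $\mu\ne\nu$. By Lemma \ref{separation of points} the set $B(\calA)=\{f\in\calA^{sa}:L(f)<+\infty\}$ separates points of $S(\calA)$, so there is $f\in B(\calA)$ with $\mu(f)\ne\nu(f)$. I would split into two cases. If $L(f)>0$, set $g:=f/L(f)$; by absolute homogeneity $L(g)=1$ and $|\mu(g)-\nu(g)|=|\mu(f)-\nu(f)|/L(f)>0$, so $d_L(\mu,\nu)>0$. If $L(f)=0$, then $L(cf)=0\le 1$ for every $c\in\R$, and $|\mu(cf)-\nu(cf)|=|c|\cdot|\mu(f)-\nu(f)|$ can be made arbitrarily large, whence $d_L(\mu,\nu)=+\infty$. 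In either case $d_L(\mu,\nu)>0$, completing the proof. There is no real obstacle; the only mild care required is in handling the possible value $+\infty$ in the triangle inequality and in the degenerate $L(f)=0$ case of the separation argument, both of which are routine.
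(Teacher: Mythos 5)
Your proposal is correct and follows essentially the same route as the paper: lower semi-continuity as a supremum of weak$^*$-continuous functions, the pseudo-distance axioms by direct verification, and strict positivity from the separation property of $B(\calA)$ followed by rescaling (your explicit case split on $L(f)=0$ versus $L(f)>0$ is a slightly more careful version of the paper's single rescaling step). No gaps.
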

\begin{proof}
	Similar statements were proved by many authors (see, for example, \cite{Connes}, \cite{Rieffel1}). For convenience, we provide a proof here.
	\begin{itemize}
		\item Since $L(1)=0$, there is at least one $f$ s.t. $L(f)\leq 1$. For every such $f$, $F(\mu,\nu):=|\mu(f)-\nu(f)|$ is a continuous map from $S(\calA)\times S(\calA)$ to $[0,+\infty)$ (since every $\mu\rightarrow \mu(f)$ is continuous by definition of the weak$^*$-topology on $S(\calA$)). Hence $d_L$ is a lower semi-continuous map from $S(\calA)\times S(\calA)$ to $[0,+\infty]$ as a pointwise supremum of continuous maps.
		\item Obviously, $d_L(\mu,\mu)=0$. If $L$ is $L$-seminorm, for $\mu\neq \nu$, due to the weak density of $B$, $\mu|_B\neq \nu|_B$. Hence, there is an $f\in B$ s.t. $|\mu(f)-\nu(f)|>0$. We can scale this $f$ by some $\alpha>0$, s.t. $L(\alpha f)\leq 1$, and obtain that $d_L(\mu,\nu)\geq |\alpha||\mu(f)-\nu(f)|>0$. Thus, in the case of $L$-seminorm, $d_L(\mu,\nu)=0$ iff $\mu=\nu$.
		\item $d_L(\mu,\nu)=d_L(\nu,\mu)$ obviously.
		\item Check the triangle inequality:
		\begin{multline*}
		d_L(\mu,\nu)+d_L(\nu,\eta)=\\
		=\sup\{|\mu(f)-\nu(f)|: L(f)\leq 1, f\in \calA^{sa}\}
		+\sup\{|\eta(f)-\nu(f)|: L(f)\leq 1, f\in \calA^{sa}\}\geq\\
		\geq \sup\{|\mu(f)-\nu(f)|+|\eta(f)-\nu(f)|: L(f)\leq 1, f\in \calA^{sa}\}\geq\\
		\geq \sup\{|\mu(f)-\nu(f)+\nu(f)-\eta(f)|: L(f)\leq 1, f\in \calA^{sa}\}\geq\\
		\geq \sup\{|\mu(f)-\eta(f)|: L(f)\leq 1, f\in \calA^{sa}\}=d_L(\mu,\eta)
		\end{multline*}
	\end{itemize}
\end{proof}

The important result of Rieffel (\cite{Rieffel2}, Th. 4.1, Th 4.2) is that in the case $L$ is an $L$-seminorm,
$$
L_d(f):=\sup\left\{\frac{f(\mu)-f(\nu)}{d_L(\mu,\nu)}: \mu\neq \nu,\ \mu,\nu\in S(\calA)\right\}
$$
is also an $L$-seminorm, and it induces the same distance on $S(\calA)$ as $L$ does: $d_L=d_{L_d}$. Moreover, $L_{d_{L_d}}=L_d$. If $L$ is a lower semi-continuous $L$-seminorm: $\{f\in \calA^{sa}: L(f)\leq t\}$ is norm closed for one, hence every, $t\in (0,+\infty)$, then $L=L_d$. We do not require lower semi-continuity in the definition of $L$-seminorm, but it is clear, that we are always able to consider $L_d$ instead of the original $L$-seminorm $L$.

In the case $L$ is an $L$-seminorm, it is also true (\cite{Rieffel1}, Pr. 1.4), that topology generated by $d_L$ is not weaker than the weak$^*$-topology on $S(\calA)$. In the case of separable $\calA$, and, hence, metrizable $S(\calA)$, it is natural to ask these two topologies to coincide. The following definition is due to Rieffel (see \cite{Rieffel2}).
\begin{definition}
	An $L$-seminorm $L$ on a unital separable $C^*$-algebra $\calA$ is called \textbf{$\Lip$-seminorm} iff 
	\begin{enumerate}
		\item $L(f)=0 \iff f= a\cdot 1,\ a\in \R$,
		\item a distance function $d_L$ on $S(\calA)$ metricizes the weak$^*$-topology on $S(\calA)$.
	\end{enumerate}
\end{definition}
It is possible to formulate a criterion for an $L$-seminorm $L$ to be a $\Lip$-seminorm. Let $B:=B(\calA)$, $B_1:=B_1(\calA)$, $N:=N(\calA)$. 
Note that $B/N$ is a norm space equipped with the quotient norm $||\cdot||_{B/N}$ defined as  
$$
||f+N||_{B/N}:=\inf_{n\in N} ||f+n||_B,
$$
where $||\cdot||_B$ is a norm on $B$ inherited from the inclusion $B\subseteq\calA$.

\begin{proposition}[Criterion for $L$ to be a $\Lip$-seminorm, \cite{Rieffel1}, Th. 1.8]
	\label{criterion of Lip-seminorm}
	An $L$-seminorm $L$ on a unital separable $C^*$-algebra $\calA$ is a $\Lip$-seminorm iff $N=\R\cdot 1$ and the image of $B_1$ in $B/N$ is totally bounded w.r.t. the quotient norm $||\cdot||_{B/N}$.
\end{proposition}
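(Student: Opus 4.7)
The plan is a bi-implication whose bridge, in both directions, is the identification $f+N \mapsto \hat{f}+\R$ (where $\hat{f}(\mu) := \mu(f)$) between $B/N$ and a subspace of $C(S(\calA))/\R$. The key scalar identity, valid for self-adjoint $f$ once we assume $N = \R\cdot 1$, is
$\|f+\R\cdot 1\| = \inf_{c\in\R}\|f-c\| = \tfrac{1}{2}(\max\sigma(f)-\min\sigma(f)) = \tfrac{1}{2}\,\mathrm{osc}(\hat{f})$,
which makes $f+N \mapsto \hat{f}+\R$ a bi-Lipschitz linear embedding; total boundedness is preserved in either direction. This single structural fact carries both halves of the proof.

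For the forward implication, I assume $L$ is a $\Lip$-seminorm. Then $N=\R\cdot 1$ is part of the definition, and $(S(\calA), d_L)$ is a compact metric space of some finite diameter $D$ because $d_L$ metricizes the weak*-compact topology. The defining formula for $d_L$ gives $|\hat{f}(\mu)-\hat{f}(\nu)|\leq L(f)\,d_L(\mu,\nu)$, so for $f\in B_1$ the function $\hat{f}$ is $1$-Lipschitz on $(S(\calA), d_L)$, with oscillation bounded by $D$. Hence $\{\hat{f}:f\in B_1\}$ is equicontinuous and uniformly bounded modulo constants, and by Arzelà–Ascoli its image in $C(S(\calA))/\R$ is totally bounded; the bridge above then transfers this to total boundedness of the image of $B_1$ in $B/N$.

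For the converse, assume $N=\R\cdot 1$ and that the image of $B_1$ in $B/N$ is totally bounded. By Proposition 1.4 of \cite{Rieffel1} (cited in the text above), the $d_L$-topology is already finer than the weak*-topology; since a finer compact Hausdorff topology on a compact Hausdorff space must coincide with the original one, it suffices to show $(S(\calA), d_L)$ is compact. Boundedness of $B_1/N$ by some $M<\infty$ gives $d_L\leq 2M$ via the elementary estimate $|\mu(f)-\nu(f)|=|\mu(f-c)-\nu(f-c)|\leq 2\inf_c\|f-c\|=2\|f+N\|$. For total boundedness at scale $\epsilon$, I pick $f_1,\ldots,f_n \in B_1$ whose classes form an $\epsilon$-net in $B_1/N$; approximating an arbitrary $f\in B_1$ by the nearest $f_i$ yields $|\mu(f)-\nu(f)|\leq |\mu(f_i)-\nu(f_i)|+2\epsilon$, hence $d_L(\mu,\nu) \leq \max_i|\mu(f_i)-\nu(f_i)|+2\epsilon$, and a finite $\epsilon$-net for the bounded image of the finite-dimensional map $\mu \mapsto (\mu(f_i))_{i=1}^n \in \R^n$ pulls back to a $3\epsilon$-net in $(S(\calA), d_L)$. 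Completeness follows from the lower semi-continuity of $d_L$ in the weak* topology (Proposition \ref{d_L is distance on S}) combined with weak*-compactness: a $d_L$-Cauchy sequence has a weak*-cluster point, and lower semi-continuity forces it to also be the $d_L$-limit.

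The main technical hurdle is the passage between total boundedness in the $C^*$-algebraic quotient $B/N$ and total boundedness of a family of affine functions on the state space; this is handled once and for all by the oscillation identity in the first paragraph. After that, the forward direction reduces to Arzelà–Ascoli on a compact metric space, and the converse becomes a routine $\epsilon$-net construction plus the standard compact-Hausdorff comparison argument.
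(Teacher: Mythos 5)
The paper does not prove this proposition at all: it is imported verbatim from Rieffel (\cite{Rieffel1}, Theorem 1.8), so there is no internal proof to compare against. Your argument is a correct self-contained reconstruction, and it follows essentially the same route as Rieffel's original: the oscillation identity $\inf_{c\in\R}\|f-c\cdot 1\|=\tfrac12(\max\sigma(f)-\min\sigma(f))=\tfrac12\,\mathrm{osc}(\hat f)$ is exactly the bridge Rieffel uses, the forward direction is Arzel\`a--Ascoli for the $1$-Lipschitz family $\{\hat f: f\in B_1\}$ on the compact metric space $(S(\calA),d_L)$, and the converse is the $\epsilon$-net argument combined with the observation that a finer compact Hausdorff topology coincides with a coarser one. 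Two small points deserve attention. First, the paper defines $B_1(\calA)=\{a:L(a)=1\}$ with equality, whereas your proof (and Rieffel, and the paper's own later definition of the closure of $L$) uses $\{a:L(a)\leq 1\}$; by homogeneity and $L(1)=0$ the two versions of the criterion are equivalent, but it is worth stating that you are reading $B_1$ as the sublevel set. Second, your completeness step extracts a weak$^*$-convergent subsequence from a $d_L$-Cauchy sequence, which requires sequential compactness of $S(\calA)$ in the weak$^*$-topology; this holds because $\calA$ is assumed separable (so $S(\calA)$ is weak$^*$-metrizable), and you should cite that hypothesis explicitly, since without it one would have to rework the step with nets and cluster points. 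With those clarifications the proof is complete.
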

Now we consider a unital commutative $C^*$-subalgebra $\calA_\alpha\subseteq \calA$.
Let us define $d_{L, \alpha}: S(\calA_\alpha)\times S(\calA_\alpha) \rightarrow [0,\infty]$ as follows:
$$
d_{L, \alpha}(\mu_\alpha,\nu_\alpha):=\sup\{|\mu(f)-\nu(f)|: f\in \calA^{sa}_\alpha,\ L(f)\leq 1\}
$$
\begin{proposition}
	\label{d_L,alpha is distance}
	For any $C^*$-algebra $\calA$ and any partially-defined $L$-seminorm $L$ on $\calA$,
	$d_{L, \alpha}$	is a lower semi-continuous $[0,+\infty]$-valued pseudo-distance function. If $B_\alpha:=B(\calA)\cap \calA_\alpha$ separates points in $S(\calA_\alpha)$ (or, equivalently, restriction of $L$ to $\calA_\alpha$ is an $L$-seminorm), then it is a measurable $[0,+\infty]$-valued distance function.
\end{proposition}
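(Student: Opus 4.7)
The plan is to reduce the entire proposition to the already-established Proposition \ref{d_L is distance on S} by viewing $d_{L,\alpha}$ as the spectral distance on $S(\calA_\alpha)$ associated with the pointwise restriction $L_\alpha := L|_{\calA_\alpha^{sa}}$. First I would check that $L_\alpha$ is itself a partially-defined $L$-seminorm on the unital $C^*$-algebra $\calA_\alpha$: absolute homogeneity, subadditivity, and $L_\alpha(1)=0$ all follow at once from the corresponding properties of $L$, since the axioms are pointwise conditions and $1\in \calA_\alpha$. Noting that $B(\calA)\cap \calA_\alpha = B(\calA)\cap \calA_\alpha^{sa}$ (because $B(\calA)\subseteq \calA^{sa}$), one sees that $B_\alpha$ is precisely the domain of finiteness of $L_\alpha$, i.e.\ $B_\alpha = B(\calA_\alpha)$ in the notation of Section 4 applied to $(\calA_\alpha,L_\alpha)$. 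Consequently the supremum defining $d_{L,\alpha}(\mu_\alpha,\nu_\alpha)$ is literally the spectral distance $d_{L_\alpha}(\mu_\alpha,\nu_\alpha)$.

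Given that identification, the first assertion is immediate: Proposition \ref{d_L is distance on S} applied to the pair $(\calA_\alpha,L_\alpha)$ yields that $d_{L,\alpha}$ is a $[0,+\infty]$-valued lower semi-continuous pseudo-distance function on $S(\calA_\alpha)$. For the second assertion, I would first observe that the parenthetical equivalence is tautological: the only clause of the definition of $L$-seminorm that is not automatic for $L_\alpha$ is precisely that $B(\calA_\alpha) = B_\alpha$ separate points of $S(\calA_\alpha)$. Under that hypothesis a second application of Proposition \ref{d_L is distance on S} (through Lemma \ref{separation of points}, which provides the element of $B_\alpha$ one needs to rescale into a test function with $L_\alpha\le 1$) upgrades the pseudo-distance to a genuine $[0,+\infty]$-valued distance. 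Borel measurability of $d_{L,\alpha}$ on the compact Hausdorff space $S(\calA_\alpha)\times S(\calA_\alpha)$ then follows for free from the lower semi-continuity already established, since any $[0,+\infty]$-valued lower semi-continuous function has closed, hence Borel, sublevel sets.

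I do not expect any real obstacle: the proof is essentially bookkeeping once one notices that restricting $L$ to a subalgebra preserves the three algebraic axioms of a partially-defined $L$-seminorm, and that the defining supremum of $d_{L,\alpha}$ is unchanged whether one takes it over $\{f\in\calA^{sa}_\alpha:L(f)\le 1\}$ or over $\{f\in\calA^{sa}_\alpha:L_\alpha(f)\le 1\}$. The only place where slight care is needed is the identification $B(\calA)\cap\calA_\alpha = B(\calA_\alpha)$, which fixes the exact correspondence between the hypothesis \textquotedblleft$B_\alpha$ separates points of $S(\calA_\alpha)$" and the fourth axiom of an $L$-seminorm for the restricted gauge; beyond that, no new arguments beyond those already recorded in the proof of Proposition \ref{d_L is distance on S} are required.
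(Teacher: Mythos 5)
Your proposal is correct. The paper proves this proposition by direct re-verification: it runs through lower semi-continuity (pointwise supremum of the continuous maps $(\mu_\alpha,\nu_\alpha)\mapsto|\mu_\alpha(f)-\nu_\alpha(f)|$), vanishing on the diagonal, the scaling argument for strict positivity under separation of points, symmetry, and the triangle inequality, all with $\alpha$-subscripts — essentially a second copy of the proof of Proposition~\ref{d_L is distance on S}. You instead reduce the statement to that earlier proposition by observing that $d_{L,\alpha}$ is literally the spectral distance $d_{L_\alpha}$ on $S(\calA_\alpha)$ for the restricted gauge $L_\alpha:=L|_{\calA_\alpha^{sa}}$, which is a partially-defined $L$-seminorm on $\calA_\alpha$ (a fact the paper itself records just before this proposition), and that $B(\calA)\cap\calA_\alpha$ is exactly the finiteness domain of $L_\alpha$, so the separation hypothesis is precisely the fourth axiom of an $L$-seminorm for $L_\alpha$. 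The underlying verification is identical, but your packaging is more economical and avoids the duplication; the only item not covered by the quotation is Borel measurability, which you correctly dispatch from lower semi-continuity (open superlevel sets), a point the paper's proof also leaves implicit. No gaps.
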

\begin{proof}
	\begin{itemize}
		\item Since $L(1)=0$, and $\calA_\alpha$ is a unital subalgebra, there exists at least one $f\in \calA^{sa}_\alpha$ s.t. $L(f)\leq 1$. For every such $f$, $F(\mu_\alpha,\nu_\alpha):=|\mu_\alpha(f)-\nu_\alpha(f)|$ is a continuous map from $S(\calA_\alpha)\times S(\calA_\alpha)$ to $[0,+\infty)$. Hence, $d_{L, \alpha}$ is a lower semi-continuous map from $S(\calA)\times S(\calA_\alpha)$ to $[0,+\infty]$ as a pointwise supremum of continuous maps.
		\item Obviously, $d_{L, \alpha}(\mu_\alpha,\mu_\alpha)=0$.
		\item If $B_\alpha$ is weak dense in $\calA_\alpha$, for every $\mu_\alpha\neq \nu_\alpha$, $\mu_\alpha|_B\neq \nu_\alpha|_B$. Hence, there is an $f\in B$ s.t. $|\mu_\alpha(f)-\nu_\alpha(f)|>0$. We can scale this $f$ by some $t>0$, s.t. $L(t f)\leq 1$, and obtain that $d_{L, \alpha}(\mu_\alpha,\nu_\alpha)\geq |t||\mu_\alpha(f)-\nu_\alpha(f)|>0$. Thus, $d_{L, \alpha}(\mu_\alpha,\nu_\alpha)=0$ iff $\mu_\alpha=\nu_\alpha$.
		\item $d_{L, \alpha}(\mu_\alpha,\nu_\alpha)=d_{L, \alpha}(\nu_\alpha,\mu_\alpha)$ obviously.
		\item Check the triangle inequality:
		\begin{multline*}
		d_{L, \alpha}(\mu_\alpha,\nu_\alpha)+d_{L, \alpha}(\nu_\alpha,\eta_\alpha)=\\
		=\sup\{|\mu_\alpha(f)-\nu_\alpha(f)|: L(f)\leq 1, f\in \calA^{sa}_\alpha\}
		+\sup\{|\eta_\alpha(f)-\nu_\alpha(f)|: L(f)\leq 1, f\in \calA^{sa}_\alpha\}\geq\\
		\geq \sup\{|\mu_\alpha(f)-\nu_\alpha(f)|+|\eta(f)-\nu_\alpha(f)|: L(f)\leq 1, f\in \calA^{sa}_\alpha\}\geq\\
		\geq \sup\{|\mu_\alpha(f)-\nu_\alpha(f)+\nu_\alpha(f)-\eta_\alpha(f)|: L(f)\leq 1, f\in \calA^{sa}_\alpha\}\geq\\
		\geq \sup\{|\mu_\alpha(f)-\eta_\alpha(f)|: L(f)\leq 1, f\in \calA^{sa}_\alpha\}=d_{L, \alpha}(\mu_\alpha,\eta_\alpha)
		\end{multline*}
	\end{itemize}
\end{proof}	
Note, that the restriction of $d_{L, \alpha}$ on $X_\alpha\subseteq S(\calA_\alpha)$ remains to be a lower semi-continuous $[0,+\infty]$-valued pseudo-distance function.

\begin{lemma}
	\label{Lip seminorm on subalgebra}
	If $B\cap \calA_\alpha$ separates points in $S(\calA_\alpha)$, where $B:=\{f\in \calA^{sa}: L(f)< \infty\}$, $\calA_\alpha$ is a unital commutative $C^*$-subalgebra of $\calA$ (equivalently, the restriction of $L$ to $\calA_\alpha$ is an $L$-seminorm), and $L$ is a $\Lip$-seminorm on $\calA$, then the restriction of $L$ to $\calA_\alpha$ is also a $\Lip$-seminorm.
\end{lemma}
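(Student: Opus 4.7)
The plan is to verify the hypotheses of Proposition \ref{criterion of Lip-seminorm} (Rieffel's criterion) for the restriction $L_\alpha := L|_{\calA_\alpha^{sa}}$, using the fact that they are known to hold for $L$ on the whole algebra $\calA$.

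First I would check that $L_\alpha$ is genuinely an $L$-seminorm on $\calA_\alpha$. Absolute homogeneity, subadditivity, and $L_\alpha(1)=0$ are immediate from the corresponding properties of $L$, since $1\in\calA_\alpha$. The separation-of-points condition is exactly the hypothesis that $B\cap\calA_\alpha=B(\calA_\alpha)$ separates points in $S(\calA_\alpha)$, which by Lemma \ref{separation of points} is equivalent to $B(\calA_\alpha)$ being weak-dense in $\calA_\alpha$. Next, I would verify that $N(\calA_\alpha)=\R\cdot 1$: the inclusion $\R\cdot 1\subseteq N(\calA_\alpha)$ is trivial, while $N(\calA_\alpha)=N(\calA)\cap\calA_\alpha^{sa}\subseteq N(\calA)=\R\cdot 1$ uses the assumption that $L$ is a $\Lip$-seminorm on $\calA$.

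The core step is to check total boundedness of the image of $B_1(\calA_\alpha)$ in the quotient $B(\calA_\alpha)/N(\calA_\alpha)$ with respect to the quotient norm. Since $\calA_\alpha$ is a $C^*$-subalgebra of $\calA$, the inclusion $\calA_\alpha\hookrightarrow\calA$ is isometric, so the norm on $B(\calA_\alpha)$ agrees with the restriction of the norm on $B(\calA)$. Combined with the equality $N(\calA_\alpha)=N(\calA)=\R\cdot 1$, this yields that for any $f\in B(\calA_\alpha)$,
\begin{equation*}
\|f+N(\calA_\alpha)\|_{B(\calA_\alpha)/N(\calA_\alpha)}=\inf_{t\in\R}\|f+t\cdot 1\|=\|f+N(\calA)\|_{B(\calA)/N(\calA)},
\end{equation*}
so the natural map $B(\calA_\alpha)/N(\calA_\alpha)\hookrightarrow B(\calA)/N(\calA)$ is an isometric embedding. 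Now $B_1(\calA_\alpha)=B_1(\calA)\cap\calA_\alpha^{sa}\subseteq B_1(\calA)$, and by Proposition \ref{criterion of Lip-seminorm} applied to the $\Lip$-seminorm $L$ on $\calA$, the image of $B_1(\calA)$ in $B(\calA)/N(\calA)$ is totally bounded. A subset of a totally bounded set is totally bounded, and under an isometric embedding total boundedness in the ambient space and in the subspace coincide, so the image of $B_1(\calA_\alpha)$ in $B(\calA_\alpha)/N(\calA_\alpha)$ is totally bounded.

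Applying the criterion in the reverse direction to $L_\alpha$ on $\calA_\alpha$ then yields that $L_\alpha$ is a $\Lip$-seminorm. The only mildly subtle point in the argument is the identification of quotient norms above; everything else is a direct transfer along the isometric inclusion of subalgebras. I expect no serious obstacle, provided the implicit hypothesis that $\calA$ (hence $\calA_\alpha$) is separable is in force, which is required for Definition \ref{criterion of Lip-seminorm} to apply to $\calA_\alpha$.
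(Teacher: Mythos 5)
Your proposal is correct and follows essentially the same route as the paper: both verify Rieffel's criterion (Proposition \ref{criterion of Lip-seminorm}) for the restriction by noting that $N=\R\cdot 1\subseteq\calA_\alpha$, that the quotient norm on $(B\cap\calA_\alpha)/N$ is the restriction of the quotient norm on $B/N$ because the inclusion of the $C^*$-subalgebra is isometric, and that the image of $B_1\cap\calA_\alpha$ is a subset of the totally bounded image of $B_1$, hence totally bounded. Your explicit remarks on the isometric identification of quotient norms and on the separability hypothesis only make the paper's argument more precise.
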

\begin{proof}
	Since the restriction of $L$ to $\calA_\alpha$ is an $L$-seminorm, it remains to check, that the distance
	$d_{L, \alpha}$ metricizes the weak$^*$-topology on $\calA_\alpha$. Let us use the criterion from Proposition \ref{criterion of Lip-seminorm}. Since $N=\R\cdot 1$, $N\subseteq \calA_\alpha$, $C^*$-norm on $\calA_\alpha$ coincides with the norm inherited from $\calA$, and $(B\cap \calA_\alpha)/(N\cap \calA_\alpha)= (B\cap \calA_\alpha)/N\subseteq B/N$, it follows that the norms $||\cdot||_{B/N}=||\cdot||_{(B\cap \calA_\alpha)/(N\cap \calA_\alpha)}$ on their common domain of definition. The image of $B_1\cap \calA_\alpha$ in $(B\cap \calA_\alpha)/(N\cap \calA_\alpha)$ coincides with the image of $B_1\cap \calA_\alpha$ in $B/N$, and, in fact, it is a subset of a totally bounded image of $B_1$ in $B/N$. Hence, it is totally bounded.
\end{proof}

Let us define $W_{p, \alpha}: S(\calA_\alpha)\times S(\calA_\alpha) \rightarrow [0,\infty]$ for $p\in [1,+\infty)$ by the formula:
$$
W_{p,\alpha}(\mu_\alpha,\nu_\alpha):=\inf\left\{\left(\int d_{L, \alpha}^p(x,y) d\pi\right)^\frac{1}{p}: \pi\in \calP(X_\alpha\times X_\alpha),\ \mathrm{Pr}(\pi)=(\mu_\alpha,\nu_\alpha)\right\}
$$
Here $\calP(X_\alpha\times X_\alpha)$ is the set of all Radon probability measures on $X_\alpha\times X_\alpha$ (recall, that each $X_\alpha$ is equipped with the weak$^*$-topology, and all $\sigma$-algebras are supposed to be Borel), $\mathrm{Pr}(\pi):=((\Pr_1)_\#(\pi), (\Pr_2)_\#(\pi))$.

In the formulation of the next statement we shall use the following notion, introduced in \cite{Rieffel2}.
\begin{definition}{(\cite{Rieffel2}, Definition 4.5)}
	Let $L$ be a $\Lip$-seminorm on a $C^*$-algebra $\calA$, $B:=\{a\in \calA^{sa}: L(a)< \infty\}$, $B_1:=\{a\in \calA^{sa}: L(a)\leq 1\}$, $\bar{B}$, $\bar{B}_1$ be their closures w.r.t. $C^*$-norm on $\calA$. The \textbf{closure} of $L$, $\bar{L}: \bar{B} \rightarrow [0, +\infty]$ is defined as
	$$
	\bar{L}(a):=\inf\{t\in [0,+\infty]: \exists b\in \bar{B}_1 \text{ s.t. } a=t\cdot b \}
	$$
\end{definition}

\begin{proposition}
	\label{W_1,alpha=d_L,alpha}
	In the case
	\begin{enumerate}
		\item $\calA$ is a separable unital $C^*$-algebra,
		\item $L$ is a $\Lip$-seminorm on $\calA$,
		\item $B\cap \calA_\alpha$ is dense in $\calA_\alpha$,
		\item for $L_\alpha:=L|_{\calA_\alpha}$ the inequality
		$$
		\bar{L}_\alpha(\sup(f,g))\leq \max(\bar{L}_\alpha(f), \bar{L}_\alpha(g))
		$$
		is satisfied for all $f,g\in \bar{B}_1\cap \calA_\alpha$,
	\end{enumerate}
	we obtain a valid equality:
	$$
	d_{L, \alpha}(\mu_\alpha,\nu_\alpha)=\inf\left\{\int d_{L, \alpha}(x,y) d\pi: \pi\in \calP(X_\alpha\times X_\alpha),\  \pr(\pi)=(\mu_\alpha,\nu_\alpha)\right\}=:W_{1,\alpha}(\mu_\alpha,\nu_\alpha)
	$$
	for all $\mu_\alpha,\nu_\alpha\in S(\calA_\alpha)$.
\end{proposition}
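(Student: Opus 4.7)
The plan is to combine Kantorovich--Rubinstein duality for compact metric spaces with Rieffel's Theorem 8.1 of \cite{Rieffel2}, which under the stated hypotheses identifies $\bar{L}_\alpha$ with the Lipschitz seminorm of the metric $d_{L,\alpha}$ on the Gelfand spectrum $X_\alpha$. First I would unpack the setting: because $B\cap\calA_\alpha$ is dense in $\calA_\alpha$, its self-adjoint part separates states of $\calA_\alpha$, so by Lemma \ref{Lip seminorm on subalgebra} the restriction $L_\alpha := L|_{\calA_\alpha}$ is itself a $\Lip$-seminorm on the separable commutative algebra $\calA_\alpha$. Consequently $d_{L,\alpha}$ metricizes the weak$^*$-topology on $S(\calA_\alpha)$ and, restricted to $X_\alpha\hookrightarrow S(\calA_\alpha)$, turns $(X_\alpha, d_{L,\alpha})$ into a compact metric space whose topology coincides with the Gelfand topology of $X_\alpha$.

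One inequality, $d_{L,\alpha}(\mu_\alpha,\nu_\alpha)\leq W_{1,\alpha}(\mu_\alpha,\nu_\alpha)$, follows by a standard duality at the level of transport plans: evaluating $d_{L,\alpha}$ at pairs of Dirac masses gives $|f(x)-f(y)|\leq d_{L,\alpha}(x,y)$ for every $f\in\calA_\alpha^{sa}$ with $L(f)\leq 1$, hence for any admissible $\pi$,
$$|\mu_\alpha(f)-\nu_\alpha(f)| = \left|\int (f(x)-f(y))\,d\pi(x,y)\right| \leq \int d_{L,\alpha}(x,y)\,d\pi(x,y),$$
and taking an infimum over $\pi$ and then a supremum over $f$ yields the bound. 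For the reverse inequality I would invoke Rieffel's Theorem 8.1: the lattice hypothesis $\bar{L}_\alpha(\sup(f,g))\leq\max(\bar{L}_\alpha(f),\bar{L}_\alpha(g))$, together with the $\Lip$-seminorm property on the commutative algebra $\calA_\alpha\simeq C(X_\alpha)$, yields $\bar{L}_\alpha = L_{d_{L,\alpha}}^e$ on $C_\R(X_\alpha)$. In particular the sup-norm closure of $\{f\in\calA_\alpha^{sa}: L(f)\leq 1\}$ is exactly the unit ball of $\Lip_1(X_\alpha, d_{L,\alpha})$. Since $\mu_\alpha$ and $\nu_\alpha$ are $C^*$-norm continuous, the supremum defining $d_{L,\alpha}(\mu_\alpha,\nu_\alpha)$ may equivalently be taken over this enlarged set. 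Classical Kantorovich--Rubinstein duality on the compact metric space $(X_\alpha, d_{L,\alpha})$ then closes the loop:
$$d_{L,\alpha}(\mu_\alpha,\nu_\alpha) = \sup\{|\mu_\alpha(f)-\nu_\alpha(f)|: f\in\Lip_1(X_\alpha, d_{L,\alpha})\} = W_{1,\alpha}(\mu_\alpha,\nu_\alpha).$$

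The main obstacle is verifying that Rieffel's Theorem 8.1 applies in the precise form needed here; concretely, one must check that hypotheses 2--4 translate into the assumptions of that theorem and that passing from $L_\alpha$ to $\bar{L}_\alpha$ does not alter the supremum that defines $d_{L,\alpha}$. The latter is a short continuity argument using that states are $C^*$-norm continuous linear functionals on $\calA_\alpha$, whereas the former is the genuinely nontrivial ingredient: the lattice hypothesis is what enables a Stone--Weierstrass-style approximation producing, for every 1-Lipschitz function on $X_\alpha$, a sequence of elements of $B\cap\calA_\alpha^{sa}$ with uniformly bounded $L$-seminorm converging to it in sup-norm.
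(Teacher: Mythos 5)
Your proposal is correct and follows essentially the same route as the paper's proof: Lemma \ref{Lip seminorm on subalgebra} to get that $L_\alpha$ is a $\Lip$-seminorm, Rieffel's Theorem 8.1 to identify $\bar{L}_\alpha$ with the metric Lipschitz seminorm on $X_\alpha$, and Kantorovich--Rubinstein duality to conclude. If anything, you are slightly more careful than the paper, which writes the identification for $L_\alpha$ rather than $\bar{L}_\alpha$ and does not comment on why passing to the closure leaves the supremum defining $d_{L,\alpha}$ unchanged.
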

\begin{proof}
	Since $S(\calA_\alpha)$ is a Bauer simplex, each element $\mu_\alpha\in S(\calA_\alpha)$ has a unique representation as a Radon probability measure on $\partial_e(S(\calA_\alpha))\simeq X_\alpha$, i.e. $\tilde{\mu}_\alpha\in \calP(X_\alpha)$, $\mathrm{bar}(\tilde{\mu}_\alpha)=\mu_\alpha$.
	By Lemma \ref{Lip seminorm on subalgebra}, the restriction of $L$ on $\calA_\alpha$ is a $\Lip$-seminorm, thus $d_L$ metricizes the weak$^*$-topology on $S(\calA)$.
	
	Due to the inequality from the hypothesis of the Lemma and density of $B\cap \calA_\alpha$, by Theorem 8.1 of \cite{Rieffel2}, $L_\alpha:=L|_{\calA_\alpha}$ coincides with the Lipschitz seminorm induced by the distance $d_L$ on $X_\alpha$:
	$$
	L_\alpha(f)=L^e_{d_L}(f):=\sup\left\{\frac{f(x)-f(y)}{d_L(x,y)}: x\neq y,\ x,y\in X_\alpha\right\},\ \forall f\in \calA_\alpha^{sa} 
	$$
	Hence we can use the Kantorovich-Rubinstein duality (see \cite{BogKol}) to obtain:
	\begin{multline*}
	W_{1,\alpha}(\mu_\alpha,\nu_\alpha):=\inf\left\{\int d_{L, \alpha}(x,y) d\pi: \pi\in \calP(X_\alpha\times X_\alpha),\ \pr(\pi)=(\tilde{\mu}_\alpha,\tilde{\nu}_\alpha)\right\}=\\
	=\sup\{\mu_\alpha(f)-\nu_\alpha(f): f\in \calA_\alpha^{sa},\ L^e_{d_L}(f)\leq 1\}=\\
	=\sup\{\mu_\alpha(f)-\nu_\alpha(f): f\in \calA_\alpha^{sa},\ L(f)\leq 1\}=:d_{L, \alpha}(\mu_\alpha,\nu_\alpha)
	\end{multline*}
\end{proof}

\begin{proposition}
	\label{W_p,alpha is distance}
	For any $C^*$-algebra $\calA$ and any partially-defined $L$-seminorm $L$ on $\calA$,
	$W_{p, \alpha}$	is a $[0,+\infty]$-valued pseudo-distance function on $S(\calA_\alpha)$. If $B_\alpha:=B\cap \calA_\alpha$ separates points in $\calA_\alpha$ (or, equivalently, the restriction of $L$ to $\calA_\alpha$ is an $L$-seminorm), then $W_{p, \alpha}$ is a $[0,+\infty]$-valued distance function.
\end{proposition}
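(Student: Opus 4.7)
The plan is to establish the four pseudo-distance axioms (non-negativity, reflexivity, symmetry, triangle inequality), and then the separation property under the stronger hypothesis, following the classical pattern for Wasserstein distances on Polish spaces. Two technical points require care: the cost $d_{L,\alpha}^p$ is $[0,+\infty]$-valued but is lower semi-continuous on $X_\alpha\times X_\alpha$ (as the composition of $t\mapsto t^p$ with the lower semi-continuous function $d_{L,\alpha}$ from Proposition \ref{d_L,alpha is distance}), so the integral defining $W_{p,\alpha}$ is a well-defined element of $[0,+\infty]$; and the base space $X_\alpha$ is compact Hausdorff but need not be metrizable, so we work with Radon probability measures throughout and rely on disintegration of Radon measures on compact Hausdorff spaces.

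Symmetry is immediate: the swap $\sigma:X_\alpha\times X_\alpha\to X_\alpha\times X_\alpha$, $(x,y)\mapsto (y,x)$, induces a bijection $\pi\mapsto \sigma_\#\pi$ between couplings of $(\mu_\alpha,\nu_\alpha)$ and of $(\nu_\alpha,\mu_\alpha)$, and $d_{L,\alpha}^p(x,y)=d_{L,\alpha}^p(y,x)$. For reflexivity, take the diagonal coupling $\pi=(\Id\times\Id)_\#\mu_\alpha$; its support lies in the diagonal on which $d_{L,\alpha}$ vanishes, so $\int d_{L,\alpha}^p\,d\pi=0$.

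The triangle inequality is the main step. Fix $\varepsilon>0$ and choose $\pi_{12}$ coupling $(\mu_\alpha,\nu_\alpha)$ and $\pi_{23}$ coupling $(\nu_\alpha,\eta_\alpha)$ whose $L^p$-costs are within $\varepsilon$ of the respective infima (each cost may be $+\infty$, in which case the inequality is trivial, so we may assume both are finite). Using disintegrations $\{\pi_{12}^y\}_{y\in X_\alpha}$, $\{\pi_{23}^y\}_{y\in X_\alpha}$ of the two measures along their common $\nu_\alpha$-marginal, glue them into a Radon probability measure on $X_\alpha^3$:
\[
\pi(A\times B\times C):=\int_B \pi_{12}^y(A)\,\pi_{23}^y(C)\,d\nu_\alpha(y).
\]
Its $(1,2)$- and $(2,3)$-projections recover $\pi_{12}$ and $\pi_{23}$, and $\pi_{13}:=(\pr_{13})_\#\pi$ is a coupling of $(\mu_\alpha,\eta_\alpha)$. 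Applying Minkowski's inequality in $L^p(\pi)$ to the pointwise triangle inequality $d_{L,\alpha}(x,z)\le d_{L,\alpha}(x,y)+d_{L,\alpha}(y,z)$ on $X_\alpha$ (Proposition \ref{d_L,alpha is distance}), then bounding the left-hand side from below by $W_{p,\alpha}(\mu_\alpha,\eta_\alpha)$ using the coupling $\pi_{13}$ and letting $\varepsilon\to 0$, one obtains the claimed triangle inequality.

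Finally, if $L|_{\calA_\alpha}$ is an $L$-seminorm, then $d_{L,\alpha}$ is a genuine distance on $X_\alpha$ by Proposition \ref{d_L,alpha is distance}. Since $d_{L,\alpha}^p$ is lower semi-continuous on the compact space $X_\alpha\times X_\alpha$ and the set of couplings with fixed marginals is weak-$*$ compact in $\calP(X_\alpha\times X_\alpha)$, the infimum in the definition of $W_{p,\alpha}$ is attained by some optimal $\pi^*$. If $W_{p,\alpha}(\mu_\alpha,\nu_\alpha)=0$, then $d_{L,\alpha}=0$ $\pi^*$-almost everywhere, so $\pi^*$ is concentrated on the diagonal, forcing $\mu_\alpha=\nu_\alpha$. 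The main obstacle is the gluing step, since the classical proof exploits Polishness of the base space; here we instead rely on the fact that Radon probability measures on compact Hausdorff spaces admit disintegrations along continuous surjections onto compact Hausdorff targets, which suffices to make the construction of $\pi$ above rigorous.
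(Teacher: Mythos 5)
Your proof is correct and follows essentially the same route as the paper's: the diagonal coupling for reflexivity, the transpose coupling for symmetry, gluing plus Minkowski for the triangle inequality, and positivity from the fact that $d_{L,\alpha}$ is a genuine distance on $X_\alpha$ when $L|_{\calA_\alpha}$ is an $L$-seminorm. Two points of divergence are worth recording. For the gluing, the paper simply invokes Lemma 1.1.6 of \cite{BogKol} for the existence of a Radon measure on $X_\alpha\times X_\alpha\times X_\alpha$ with prescribed two-dimensional projections, whereas you reconstruct that lemma by hand via disintegration of Radon measures along the common marginal; these are the same idea, but citing the lemma sidesteps the need to justify the existence of disintegrations over a possibly non-metrizable compact Hausdorff base, which is the one technical obstacle you correctly flag. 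For positivity, your argument is in fact more complete than the paper's: the paper concludes $W_{p,\alpha}(\mu_\alpha,\nu_\alpha)>0$ for $\mu_\alpha\neq\nu_\alpha$ merely from the observation that no coupling is concentrated on the diagonal, which by itself does not exclude an infimum equal to zero taken over couplings each of strictly positive cost. Your appeal to weak$^*$-compactness of the set of couplings together with lower semi-continuity of $\pi\mapsto\int d_{L,\alpha}^p\,d\pi$ (so that an optimal $\pi^*$ exists, and vanishing cost forces $\pi^*$ to charge only the diagonal, whence equal marginals) supplies exactly the step the paper leaves implicit.
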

\begin{proof}
	By a slight abuse of notation, we denote as $\mu_\alpha$ both the linear functional (state) $\mu$ restricted to $\calA_\alpha$ and the measure on $X_\alpha$ that corresponds to this state. By the Rietz representation theorem, it is a Radon probability measure. By $\Pi(\mu_\alpha,\nu_\alpha)$ we denote the set of all Radon probability measures on $X_\alpha\times X_\alpha$ with the marginals $\mu_\alpha$ and $\nu_\alpha$.
	\begin{itemize}
		\item $W_{p, \alpha}(\mu_\alpha,\mu_\alpha)=0$, because $d^p_{L,\alpha}(x,x)=0$, and $(\Id,\Id)_\#\mu_\alpha$ is an element of $\Pi(\mu_\alpha, \mu_\alpha)$, which is concentrated on the diagonal set $\Diag:=\{(x,x):x\in X_\alpha\}$.
		\item If $B_\alpha$ is weak dense in $B$, $d^p_{L,\alpha}$ is a $[0,+\infty]$-valued distance function. Hence $d^p_{L,\alpha}(x,x)=0$ iff $x=y$. Since $\Pi(\mu_\alpha, \nu_\alpha)$ contains no measures $\pi$ s.t. $\pi(\Diag)=1$, when $\mu_\alpha\neq \nu_\alpha$, it follows that $W_{p, \alpha}(\mu_\alpha,\nu_\alpha)>0$ in this case.
		\item $W_{p, \alpha}(\mu_\alpha,\nu_\alpha)=W_{p, \alpha}(\nu_\alpha,\mu_\alpha)$, because $\forall \pi\in \Pi(\mu_\alpha,\nu_\alpha)$ there exists $\pi^T\in \Pi(\nu_\alpha, \mu_\alpha)$ defined as $\int f(x,y) d\pi=\int f(y,x) d\pi^T$ for all measurable functions $f$ on $X_\alpha\times X_\alpha$, and because $d^p_{L,\alpha}$ is Borel measurable and symmetric.
		\item By Lemma 1.1.6 from \cite{BogKol} for any two Radon measures $\pi_1, \pi_2$ on $X_\alpha\times X_\alpha$ s.t. the second marginal of the first measure coincides with the first marginal of the second measure, there exists a Radon measure on $X_\alpha\times X_\alpha\times X_\alpha$, such that its projection on the first two factors is equal to $\pi_1$, and the projection on the last two factors is equal to $\pi_2$. Let for any $n\in \N$, $\pi_1^n\in \Pi(\mu_\alpha,\nu_\alpha)$ s.t. $W_{p,\alpha}(\mu_\alpha,\nu_\alpha)\geq \left(\int d^p_{L,\alpha} \pi_1^n\right)^{\frac{1}{p}}-\frac{1}{n}$, $\pi_2^n\in \Pi(\nu_\alpha,\eta_\alpha)$ s.t. $W_{p,\alpha}(\nu_\alpha,\eta_\alpha)\geq \left(\int d^p_{L,\alpha} \pi_2^n\right)^{\frac{1}{p}}-\frac{1}{n}$, $\gamma^n$ be a measure in $\Pi(\mu_\alpha,\nu_\alpha,\eta_\alpha)$ as in the described Lemma with projections $\pi^n_1$, $\pi^n_2$.
		Then
		\begin{multline*}
		W_{p, \alpha}(\mu_\alpha,\eta_\alpha)\leq \left(\int d^p_{L,\alpha}(x,z) d\mathrm{Pr}^{1,3}_{\#}(\gamma^n)\right)^{\frac{1}{p}}=\left(\int d^p_{L,\alpha}(x,z) d\gamma^n\right)^{\frac{1}{p}}\leq\\
		\leq \left(\int (d_{L,\alpha}(x,y)+d_{L,\alpha}(y,z))^p d\gamma^n\right)^{\frac{1}{p}}
		\leq \left(\int (d_{L,\alpha}(x,y) d\gamma^n\right)^{\frac{1}{p}}+\left(\int (d_{L,\alpha}(y,z) d\gamma^n\right)^{\frac{1}{p}}=\\
		=\left(\int d^p_{L,\alpha} \pi_1^n\right)^{\frac{1}{p}}+\left(\int d^p_{L,\alpha} \pi_2^n\right)^{\frac{1}{p}}\leq W_{p,\alpha}(\mu_\alpha,\nu_\alpha)+W_{p,\alpha}(\nu_\alpha,\eta_\alpha)+\frac{2}{n}
		\end{multline*}
		Taking a limit as $n\rightarrow \infty$ in both parts of the inequality, we obtain the required statement.
	\end{itemize}	
\end{proof}	

\begin{proposition}
	\label{W_beta>=W_alpha}
	Let $L$ be a partially-defined $L$-seminorm on a unital $C^*$-algebra $\calA$.
	For any $\mu,\nu\in QS(\calA)$ and any pair $\calA_\alpha\subseteq \calA_\beta$ of unital commutative $C^*$-subalgebras of $\calA$, it is true that
	$$
	W_{p,\beta}(\mu_\beta, \nu_\beta)\geq W_{p, \alpha}(\mu_\alpha,\nu_\alpha),
	$$
\end{proposition}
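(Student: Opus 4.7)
The plan is to show that the restriction map $R_{\beta,\alpha}: X_\beta \twoheadrightarrow X_\alpha$ between Gelfand spectra is a $1$-Lipschitz contraction with respect to the pseudo-distances $d_{L,\beta}$ and $d_{L,\alpha}$, and then to observe that any transport plan on $X_\beta \times X_\beta$ can be pushed down along $R_{\beta,\alpha} \times R_{\beta,\alpha}$ to an admissible plan between $\mu_\alpha$ and $\nu_\alpha$ of no greater $p$-cost.

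First I would establish the contraction property. For $f \in \calA_\alpha^{sa} \subseteq \calA_\beta^{sa}$ and $x' \in X_\beta = \Spec(\calA_\beta)$, the restriction map satisfies $R_{\beta,\alpha}(x')(f) = x'(f)$ directly by definition, so
$$d_{L,\alpha}(R_{\beta,\alpha}(x'), R_{\beta,\alpha}(y')) = \sup\{|x'(f)-y'(f)|: f \in \calA_\alpha^{sa},\ L(f) \leq 1\} \leq d_{L,\beta}(x', y'),$$
since the supremum defining $d_{L,\beta}$ is taken over the larger index set $\calA_\beta^{sa}$.

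Next, given any $\pi_\beta \in \calP(X_\beta \times X_\beta)$ with $\mathrm{Pr}(\pi_\beta) = (\mu_\beta, \nu_\beta)$, I would form $\pi_\alpha := (R_{\beta,\alpha} \times R_{\beta,\alpha})_\# \pi_\beta$. Using $(R_{\beta,\alpha})_\#\mu_\beta = \mu_\alpha$ (the identification of state restriction with pushforward of Radon measures on spectra, recorded at the start of the section) and the fact that coordinate projections commute with products of maps, one checks $\mathrm{Pr}(\pi_\alpha) = (\mu_\alpha, \nu_\alpha)$. Since $d_{L,\alpha}^p$ is lower semi-continuous by Proposition \ref{d_L,alpha is distance}, hence Borel measurable, the change of variables formula combined with the contraction yields
$$\int_{X_\alpha \times X_\alpha} d_{L,\alpha}^p \, d\pi_\alpha = \int_{X_\beta \times X_\beta} d_{L,\alpha}^p \circ (R_{\beta,\alpha} \times R_{\beta,\alpha}) \, d\pi_\beta \leq \int_{X_\beta \times X_\beta} d_{L,\beta}^p \, d\pi_\beta.$$
Extracting the $p$-th root and passing to the infimum over $\pi_\beta$ gives the desired inequality.

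No real obstacle arises here; the only piece of bookkeeping worth flagging is the identification of state restriction with pushforward of the associated spectral measures, which has already been settled. Conceptually, enlarging $\calA_\alpha$ to $\calA_\beta$ can only enlarge the supremum defining the spectral distance on the base spectrum, so the transport problem is being carried out over a space with a uniformly larger metric, and admissible plans on $X_\beta \times X_\beta$ contract harmlessly to admissible plans on $X_\alpha \times X_\alpha$.
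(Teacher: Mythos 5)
Your proof is correct and follows essentially the same route as the paper: push transport plans from $X_\beta\times X_\beta$ down to $X_\alpha\times X_\alpha$ along $(R_{\beta,\alpha}\times R_{\beta,\alpha})_\#$ and use that $d_{L,\alpha}\circ(R_{\beta,\alpha}\times R_{\beta,\alpha})\leq d_{L,\beta}$ pointwise. If anything, your version is slightly tidier, since it only needs that the pushforward maps $\Pi(\mu_\beta,\nu_\beta)$ \emph{into} $\Pi(\mu_\alpha,\nu_\alpha)$, whereas the paper invokes surjectivity of this map onto $\Pi(\mu_\alpha,\nu_\alpha)$ -- a stronger lifting statement that is not actually required for this inequality.
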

\begin{proof}
	Note, that $d_{L,\beta}(\mu_\beta,\nu_\beta)\geq d_{L,\alpha}(\mu_\alpha,\nu_\alpha)$ as follows directly from the definition of $d_{L,\alpha}$.
	Since the restriction map $R_{\beta,\alpha}: X_\beta \twoheadrightarrow X_\alpha$ is surjective (as follows from Gelfand duality), the direct product map $(R\times R)_{\beta,\alpha}: X_\beta\times X_\beta \twoheadrightarrow X_\alpha\times X_\alpha$, $(R\times R)_{\beta,\alpha}(x_\beta,y_\beta)=(R_{\beta,\alpha}(x_\beta), R_{\beta,\alpha}(y_\beta))$, and the corresponding pushforward of measures $((R\times R)_{\beta,\alpha})_\#: \calP(X_\beta\times X_\beta)\twoheadrightarrow \calP(X_\alpha\times X_\alpha)$ are also surjective. Since projection operator commutes with the restriction maps, $((R\times R)_{\beta,\alpha})_\#: \Pi(\mu_\beta, \nu_\beta)\twoheadrightarrow \Pi(\mu_\alpha, \nu_\alpha)$ is well-defined and surjective too. Thus, every $\pi_\alpha\in \calP(\mu_\alpha, \nu_\alpha)$ is equal to $((R\times R)_{\beta,\alpha})_\#\pi_\beta$ for some 
	$\pi_\beta\in \calP(\mu_\beta, \nu_\beta)$. It follows that $\int d_{L,\alpha}d\pi_\alpha=\int d_{L,\alpha}\circ (R\times R)_{\beta,\alpha}d\pi_\beta$. But 
	$$
	d_{L,\beta}(\mu_\beta,\nu_\beta)\geq d_{L,\alpha}(\mu_\alpha,\nu_\alpha)=(d_{L,\alpha}\circ (R\times R)_{\beta,\alpha})(\mu_\beta,\nu_\beta)
	$$
	and, hence, $\int d_{L,\alpha}d\pi_\alpha\leq \int d_{L,\beta}d\pi_\beta$. Thus, $W_{p,\beta}(\mu_\beta, \nu_\beta)\geq W_{p, \alpha} (\mu_\alpha,\nu_\alpha)$.
\end{proof}
\begin{proposition}
	\label{W_q>=W_p}
	Let $L$ be a partially-defined $L$-seminorm on a unital $C^*$-algebra $\calA$, $\calA_\alpha\subseteq \calA$ be a unital commutative $C^*$-subalgebra. Then
	$W_{p,\alpha}\leq W_{q,\alpha}$ if $p\leq q$.
\end{proposition}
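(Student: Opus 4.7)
The plan is to deduce the inequality pointwise on the set of transport plans, via Jensen's (equivalently, Hölder's) inequality applied to the probability measure $\pi$, and then pass to the infimum on both sides. Recall that for $1 \leq p \leq q < \infty$ and any probability measure $\pi$, the $L^p(\pi)$-norm is monotone in $p$: for a nonnegative measurable function $g$ on $X_\alpha \times X_\alpha$,
\begin{equation*}
\left(\int g^p\, d\pi\right)^{1/p} \leq \left(\int g^q\, d\pi\right)^{1/q}.
\end{equation*}
This follows by writing $p/q + (q-p)/q = 1$ and applying Hölder with exponents $q/p$ and $q/(q-p)$ to the product $g^p \cdot 1$, using $\pi(X_\alpha \times X_\alpha) = 1$.

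The first step is to fix an arbitrary $\pi \in \Pi(\mu_\alpha,\nu_\alpha)$ and apply this to $g := d_{L,\alpha}$. Since $d_{L,\alpha}$ is a $[0,+\infty]$-valued lower semi-continuous pseudo-distance (Proposition \ref{d_L,alpha is distance}), it is Borel measurable, so the integrals are well defined in $[0,+\infty]$. If $\int d_{L,\alpha}^q\, d\pi = +\infty$, the inequality is trivial; otherwise $d_{L,\alpha}$ is $\pi$-a.e. finite and Hölder yields
\begin{equation*}
\left(\int d_{L,\alpha}^p\, d\pi\right)^{1/p} \leq \left(\int d_{L,\alpha}^q\, d\pi\right)^{1/q}.
\end{equation*}

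The second step is to pass to the infimum. For every $\pi \in \Pi(\mu_\alpha,\nu_\alpha)$ the left-hand side is bounded below by $W_{p,\alpha}(\mu_\alpha,\nu_\alpha)$, so
\begin{equation*}
W_{p,\alpha}(\mu_\alpha,\nu_\alpha) \leq \left(\int d_{L,\alpha}^q\, d\pi\right)^{1/q}.
\end{equation*}
Taking the infimum of the right-hand side over $\pi \in \Pi(\mu_\alpha,\nu_\alpha)$ gives $W_{p,\alpha}(\mu_\alpha,\nu_\alpha) \leq W_{q,\alpha}(\mu_\alpha,\nu_\alpha)$, as required.

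There is no real obstacle here; the only mild subtlety is handling the possibility that $d_{L,\alpha}$ takes the value $+\infty$ (so that $W_{p,\alpha}$ and $W_{q,\alpha}$ may themselves be infinite), but this is dealt with by the trivial case above, and the $[0,+\infty]$-valued nature of the distance does not otherwise affect the Hölder step.
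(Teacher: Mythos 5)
Your proof is correct and follows essentially the same route as the paper: monotonicity of the $L^p(\pi)$-norms for the probability measure $\pi$ applied to $d_{L,\alpha}$, followed by passing to the infimum over transport plans. You merely spell out the Hölder argument and the infinite-value caveat, which the paper leaves implicit.
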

\begin{proof}
 By definition, $W_{p,\alpha}(\mu_\alpha,\nu_\alpha)=\inf \{||d_{p,\alpha}||_{L^p(\pi)}: \pi\in \Pi(\mu_\alpha,\nu_\alpha)\}$.
	The statement follows directly from the fact, that $||\cdot||_{L^p(\pi)}\leq ||\cdot||_{L^q(\pi)}$ for $p\leq q$ and any probability measure $\pi$.
\end{proof}

\begin{proposition}
	Let $\calA$ be a separable unital $C^*$-algebra, $L$ be a $\Lip$-seminorm, and $B\cap \calA_\alpha$ separates points in  $S(\calA_\alpha)$, where $B:=\{f\in \calA^{sa}: L(f)< \infty\}$, $\calA_\alpha$ is a unital commutative $C^*$-subalgebra of $\calA$ (equivalently, the restriction of $L$ to $\calA_\alpha$ is an $L$-seminorm). Then $W_{p,\alpha}$ metricizes the weak$^*$-topology on $S(\calA_\alpha)$ for every $p\in [1,\infty)$.
\end{proposition}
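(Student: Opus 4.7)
The plan is to reduce the claim to the classical fact that on a compact metric space the $L^p$-Wasserstein distance metricizes weak convergence of probability measures, applied to $X_\alpha = \Spec(\calA_\alpha)$ equipped with the restriction of $d_{L,\alpha}$.

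First I would invoke Lemma \ref{Lip seminorm on subalgebra}: the hypotheses (separability of $\calA$, $L$ is a $\Lip$-seminorm, $B\cap \calA_\alpha$ separates points of $S(\calA_\alpha)$) give that $L_\alpha := L|_{\calA_\alpha}$ is a $\Lip$-seminorm on $\calA_\alpha$. Consequently, $d_{L,\alpha}$ metricizes the weak$^*$-topology on $S(\calA_\alpha)$. Passing to the closed subset $X_\alpha \hookrightarrow S(\calA_\alpha)$ (the Gelfand spectrum, identified with $\partial_e S(\calA_\alpha)$), the restriction of $d_{L,\alpha}$ to $X_\alpha$ metricizes the subspace topology, which is the original weak$^*$-topology on $\Spec(\calA_\alpha)$. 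Since $X_\alpha$ is a closed subset of the compact space $S(\calA_\alpha)$, the pair $(X_\alpha, d_{L,\alpha}|_{X_\alpha\times X_\alpha})$ is a compact metric space; in particular it has finite diameter, so $d_{L,\alpha}^p$ is bounded and integrable against any element of $\calP(X_\alpha \times X_\alpha)$, and $W_{p,\alpha}$ is finite on $S(\calA_\alpha) \times S(\calA_\alpha)$.

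Next I would apply the classical Wasserstein result: on any compact metric space $(X,d)$, for every $p\in [1,\infty)$ the distance $W_p$ is a distance on $\calP(X)$ that metricizes the topology of weak convergence of Radon probability measures (see \cite{BogKol} or \cite{Villani1}). Applied to $(X_\alpha, d_{L,\alpha}|_{X_\alpha})$ this says $W_{p,\alpha}$ metricizes weak convergence on $\calP(X_\alpha)$. Under the Riesz-representation bijection $S(\calA_\alpha) \simeq \calP(X_\alpha)$ established earlier, weak convergence of Radon measures on $X_\alpha$ corresponds precisely to the weak$^*$-convergence of states on $\calA_\alpha$: a net $\mu_\alpha^i \to \mu_\alpha$ in the weak$^*$-topology iff $\int f\, d\mu_\alpha^i \to \int f\, d\mu_\alpha$ for all $f \in \calA_\alpha = C_\C(X_\alpha)$, which is the definition of weak convergence of Radon measures. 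This yields the claim.

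The only substantive point, beyond the cited classical theorem, is the careful identification of the two notions of ``weak'' — the $C^*$-algebraic weak$^*$-topology on $S(\calA_\alpha)$ and the measure-theoretic weak convergence on $\calP(X_\alpha)$ — and the verification that the metric $d_{L,\alpha}$, originally defined via the gauge on the ambient algebra, does induce the correct topology on $X_\alpha$ so that the classical theorem applies without adjustment. Both reductions are essentially bookkeeping given Lemma \ref{Lip seminorm on subalgebra} and the Gelfand--Riesz identifications set up in the previous section, so no genuinely new obstacle arises; the work is to assemble these identifications in the correct order.
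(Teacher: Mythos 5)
Your proposal is correct and follows essentially the same route as the paper's own proof: invoke Lemma \ref{Lip seminorm on subalgebra} to see that $L|_{\calA_\alpha}$ is a $\Lip$-seminorm, so $(X_\alpha, d_{L,\alpha})$ is a compact metric space, and then apply the classical fact that $W_p$ metricizes weak convergence on $\calP(X_\alpha)=S(\calA_\alpha)$. Your additional care in matching the measure-theoretic notion of weak convergence with the $C^*$-algebraic weak$^*$-topology is bookkeeping the paper leaves implicit, but it is the same argument.
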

\begin{proof}
	By Proposition \ref{Lip seminorm on subalgebra}, $L$, being restricted to $\calA_\alpha$, remains to be a $\Lip$-seminorm. Hence (by definition of a $\Lip$-seminorm) $d_{L,\alpha}$ metricizes the weak$^*$-topology on $S(\calA_\alpha)$.
	
	By definition, $W_{p,\alpha}^p(\mu_\alpha,\nu_\alpha):=\inf\left\{\int d_{L, \alpha}^p d\pi: \pi\in \Pi(\mu_\alpha,\nu_\alpha)\right\}$ is the $L^p$-Wasserstein distance between measures $\mu_\alpha$ and $\nu_\alpha$ on a compact metric space $(X_\alpha, d_{L,\alpha})$. Hence, by the standard theory of Wasserstein distances (see any of \cite{AmbGigli1}, \cite{BogKol}, \cite{Villani1}), it metricizes the weak$^*$-topology on $\calP(X_\alpha)=S(\calA_\alpha)$.
\end{proof}

\section{Projective $L^p$-Wasserstein distances}
\sectionmark{Projective $L^p$-Wasserstein distances}

Let $\calA$ be a unital $C^*$-algebra. Define $\calC(\calA):=(\{\calA_\alpha\}, \subseteq)$, $Spec(\calC(\calA)):=(\{\Spec(\calA_\alpha)\}, \calR)$, $S(\calC(\calA)):=(\{S(\calA_\alpha)\}, \calR_\#)$ as in the previous section.

We are going to construct a distance function on the quasi-state space of $\calA$ using distance functions on the state spaces of its unital commutative subalgebras. As we have already seen, distance functions $W_{p,\alpha}$ on $S(\calA_\alpha)$ are well-behaved (they are actually distances and metricize the weak$^*$-topologies) in the case $L$ is finite on a large enough subspace of $\calA_\alpha$, or, more precisely, if $B\subseteq \calA_\alpha$ separates points in $S(\calA_\alpha)$ (here $B:=\{a\in \calA^{sa}: L(a)<\infty\}$ as earlier). It motivates the following definition.

\begin{definition}
	An $L$-seminorm $L$ on a unital $C^*$-algebra $\calA$ is called \textbf{solid} iff $B\cap \calA_\alpha$ separates points in $S(\calA_\alpha)$ for any \textit{maximal} unital commutative subalgebra $\calA_\alpha$ of $\calA$.
\end{definition}
In other words, we ask an $L$-seminorm to remain to be an $L$-seminorm, when it is restricted to a maximal subalgebra. It appears (as Proposition \ref{Lip seminorm on subalgebra} states), that in this case a $\Lip$-seminorm remains to be a $\Lip$-seminorm when restricted to a maximal subalgebra.
\begin{example}
	If $L$-seminorm $L$ on $\calA$ has only finite values, it is solid.
\end{example}

\begin{remark}
	As it was shown in Lemma \ref{lemma about self-adjoint part}, for every unital $C^*$-algebra $\calA$, its self-adjoint part $\calA^{sa}$ is covered by self-adjoint parts of unital commutative $C^*$-subalgebras of $\calA$. Let us fix some $f\in \calA^{sa}$. Commutative unital $C^*$-subalgebras containing $f$ constitute a poset w.r.t. inclusion. Each chain of this poset has an upper bound (union of the corresponding subalgebras). Using Zorn's lemma, we conclude, that $f$ is a self-adjoint element of some maximal commutative unital $C^*$-subalgebra.
	Hence, the self-adjoint part of a unital $C^*$-algebra is covered by the self-adjoint parts of its maximal unital commutative $C^*$-subalgebras.
\end{remark}
As follows from the results of the previous section, if $L$ is a solid $L$-seminorm on $\calA$, then for every maximal unital commutative $C^*$-subalgebra $\calA_\alpha$, $W_{p,\alpha}$ is a $[0, +\infty]$-valued distance function. If, moreover, $L$ is a $\Lip$-seminorm (which is possible only if $\calA$ is separable), $W_{p,\alpha}$ metricizes the weak$^*$-topology on $S(\calA_\alpha)$.

\begin{proposition}
	\label{solid L separates points in QS}
	If $L$ is a solid $L$-seminorm on $\calA$, $B(\calA)$ separates points in $QS(\calA)$.
\end{proposition}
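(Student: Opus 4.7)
The plan is to reduce the separation property on the quasi-state space to the defining property of solidity on a single maximal commutative subalgebra. The key observation is that although quasi-states are only linear on commutative subalgebras, any two distinct quasi-states must already differ on some element contained inside one such subalgebra, at which point solidity can be invoked directly.

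First I would fix distinct $\mu, \nu \in QS(\calA)$ and produce a self-adjoint element distinguishing them. By the earlier proposition showing that a quasi-state is uniquely determined by its values on $\calA^{sa}$ (via $\mu(a+ib) = \mu(a)+i\mu(b)$), there must exist some $a \in \calA^{sa}$ with $\mu(a) \neq \nu(a)$. Next I invoke the Remark preceding the proposition: by Zorn's lemma, every self-adjoint element sits in the self-adjoint part of some maximal unital commutative $C^*$-subalgebra, so we can pick a maximal commutative unital $C^*$-subalgebra $\calA_\alpha \subseteq \calA$ with $a \in \calA_\alpha^{sa}$.

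Now consider the restrictions $\mu_\alpha := \mu|_{\calA_\alpha}$ and $\nu_\alpha := \nu|_{\calA_\alpha}$, which are genuine states on $\calA_\alpha$ by the very definition of $QS(\calA)$. Since $\mu_\alpha(a) = \mu(a) \neq \nu(a) = \nu_\alpha(a)$, we have $\mu_\alpha \neq \nu_\alpha$ in $S(\calA_\alpha)$. By the solidity hypothesis, $B(\calA) \cap \calA_\alpha$ separates points in $S(\calA_\alpha)$, so there exists $f \in B(\calA) \cap \calA_\alpha$ with $\mu_\alpha(f) \neq \nu_\alpha(f)$. This $f$ belongs to $B(\calA)$, and
\[
\mu(f) = \mu_\alpha(f) \neq \nu_\alpha(f) = \nu(f),
\]
which is exactly what we needed.

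There is no real obstacle here: the proof is essentially bookkeeping, and the only non-trivial input is the Zorn's-lemma argument from the Remark together with the earlier identification of elements of $QS(\calA)$ with quasi-states in the sense of Definition \ref{quasi-state}. One minor point to double-check while writing is that we use the fact that $a$ itself (rather than merely some element) can be placed inside a maximal commutative subalgebra, which is precisely what the Remark guarantees; this is why the definition of solidity was stated on maximal subalgebras rather than on arbitrary ones.
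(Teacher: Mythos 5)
Your proof is correct and is essentially the same argument as the paper's, just run in the direct (contrapositive) direction: the paper assumes $\mu\neq\nu$ with $\mu|_B=\nu|_B$ and derives a contradiction from solidity on each maximal commutative subalgebra plus the covering of $\calA^{sa}$, while you start from an $a\in\calA^{sa}$ separating $\mu$ and $\nu$, place it in a maximal commutative subalgebra via the Zorn's-lemma Remark, and invoke solidity there. The ingredients and their roles are identical.
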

\begin{proof}
	Let $\mu,\nu\in QS(\calA)$, $\mu\neq \nu$, but $\mu|_B=\nu|_B$. It follows that $\mu|_{B\cap\calA_\alpha}=\nu|_{B\cap\calA_\alpha}$ for every maximal commutative unital $C^*$-subalgebra $\calA_\alpha$. Since $L$ is solid, $\mu|_{\calA_\alpha}=\nu|_{\calA_\alpha}$. Since maximal unital commutative subalgebras of $\calA$ covers $\calA^{sa}$, it follows that $\mu=\nu$, which contradicts the assumption.
\end{proof}
This fact allows us to extend distance function $d_L$ from the state space $S(\calA)$ to the quasi-state space $QS(\calA)$.

\begin{definition}
	Let $\calA$ be a unital $C^*$-algebra, $L$ be a partially-defined $L$-seminorm on $\calA$. Then $d_L$: $QS(\calA)\times QS(\calA) \rightarrow [0,\infty]$ is defined by the formula:
	$$
	d_L(\mu,\nu):=\sup\{|\mu(f)-\nu(f)|: L(f)\leq 1, f\in \calA^{sa}\}
	$$
\end{definition}
\begin{proposition}
	\label{d_L is distance on QS}
	$d_L$ is a $[0,+\infty]$-valued lower semi-continuous pseudo-distance function on $QS(\calA)$. If $L$ is a solid $L$-seminorm, $d_L$ is a $[0,+\infty]$-valued lower semi-continuous distance function on $QS(\calA)$.
\end{proposition}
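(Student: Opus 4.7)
The plan is to mimic the proof of Proposition \ref{d_L is distance on S}, replacing $S(\calA)$ by $QS(\calA)$ at each step. The only genuinely new ingredient is the verification that the evaluation maps $\mu \mapsto \mu(f)$ are continuous on $QS(\calA)$, which we have already established, and the separation of points, for which we can invoke Proposition \ref{solid L separates points in QS} in the solid case.

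First I would handle lower semi-continuity. For any fixed $f \in \calA^{sa}$ with $L(f) \leq 1$, the map $\mu \mapsto \mu(f)$ is continuous on $QS(\calA)$: by Lemma \ref{lemma about self-adjoint part}, $f$ lies in some $\calA_\alpha^{sa}$, so evaluation at $f$ factors as $\mu \mapsto \mu|_{\calA_\alpha} \mapsto \mu_\alpha(f)$, both of which are continuous by the definition of the projective topology on $QS(\calA)$. Hence $(\mu,\nu)\mapsto |\mu(f)-\nu(f)|$ is continuous on $QS(\calA)\times QS(\calA)$, and $d_L$ is lower semi-continuous as a pointwise supremum of continuous functions. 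The existence of at least one admissible $f$ (e.g.\ $f=1$) guarantees $d_L$ is well-defined and $[0,+\infty]$-valued.

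Next, the pseudo-distance axioms. Reflexivity ($d_L(\mu,\mu)=0$) and symmetry are immediate. For the triangle inequality, the exact chain of inequalities used in Proposition \ref{d_L is distance on S} carries over verbatim since it only uses linearity of each $\mu \in QS(\calA)$ on the one-dimensional subspace $\R\cdot f$ (which holds as $f$ lies in some commutative subalgebra on which $\mu$ is linear) and the triangle inequality for $|\cdot|$ in $\C$.

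For the second assertion, assume $L$ is solid and $\mu \neq \nu$ in $QS(\calA)$. By Proposition \ref{solid L separates points in QS}, there exists $f \in B(\calA)$ with $\mu(f) \neq \nu(f)$. Choose $t>0$ such that $L(tf) = tL(f) \leq 1$ (taking $t = 1/L(f)$ if $L(f)>0$, or any $t>0$ otherwise); then $|\mu(tf)-\nu(tf)| = t|\mu(f)-\nu(f)| > 0$, so $d_L(\mu,\nu) > 0$. Combined with the pseudo-distance properties already verified, this shows $d_L$ is a $[0,+\infty]$-valued distance function, and lower semi-continuity is preserved from the general case. No real obstacle is expected; the proof is essentially a transcription of the argument for $S(\calA)$, with the only conceptual point being that the projective topology on $QS(\calA)$ was deliberately defined so as to make all evaluation maps $\mu\mapsto\mu(f)$ continuous.
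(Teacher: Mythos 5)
Your proposal is correct and follows essentially the same route as the paper: both reduce to the argument of Proposition \ref{d_L is distance on S}, establish lower semi-continuity from the continuity of the evaluation maps $\mu\mapsto\mu(f)$ in the projective topology on $QS(\calA)$, and invoke Proposition \ref{solid L separates points in QS} together with a rescaling of $f$ to get separation in the solid case. The only difference is cosmetic: you spell out the factoring of evaluation through a subalgebra $\calA_\alpha$, whereas the paper cites the already-proved fact that the projective topology makes all such evaluations continuous.
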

\begin{proof}
 	The proof is analogous to the proof of Proposition \ref{d_L is distance on S}. We only provide an argument for the lower semi-continuity of $d_L$ on the new domain of definition. The rest part of the proof can be copy-pasted directly from the proof of Proposition \ref{d_L is distance on S}.
 	
 	Note, that $F(\mu,\nu):=|\mu(f)-\nu(f)|$ is a continuous map from $QS(\calA)\times QS(\calA)$ to $[0,+\infty)$, because every $\mu\rightarrow \mu(f)$ is continuous by the definition of the projective topology on $QS(\calA)$. Hence $d_L$ is a lower semi-continuous map from $QS(\calA)\times QS(\calA)$ to $[0,+\infty]$ as a pointwise supremum of continuous maps.
 	In case $L$ is solid, by Proposition \ref{solid L separates points in QS}, $B(\calA)$ separates points in $QS(\calA)$, and, hence, $d_L(\mu,\nu)=0$ implies $\mu=\nu$ (as in \ref{d_L is distance on S}).
\end{proof}

\begin{definition}
	For any $p\in [1,\infty]$ define the \textbf{projective $L^p$-Wasserstein distance} $\arWp{p}: QS(\calA)\times QS(\calA) \rightarrow [0,\infty]$ by the formula:
	$$
	\arWp{p}(\mu,\nu):=\sup_\alpha\{W_{p,\alpha}(\mu_\alpha,\nu_\alpha)\}
	$$
	where $\mu_\alpha:=\mu|_{\calA_\alpha}$.
\end{definition}
\begin{proposition}
	\label{W_proj is distance on QS}
	For any unital $C^*$-algebra $\calA$ and any partially-defined $L$-seminorm $L$, $\arWp{p}$ is a $[0,+\infty]$-valued pseudo-distance function. If $L$ is a solid $L$-seminorm, then $\arWp{p}$ is a $[0,+\infty]$-valued distance function.
\end{proposition}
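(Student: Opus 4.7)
The plan is to verify the four pseudo-distance (resp. distance) axioms for $\arWp{p}$ one by one, pushing them through the supremum from the componentwise properties already established in Proposition \ref{W_p,alpha is distance}. The first three axioms -- vanishing on the diagonal, symmetry, and the triangle inequality -- will hold without any extra assumption on $L$; only the implication $\arWp{p}(\mu,\nu)=0 \Rightarrow \mu=\nu$ will use solidity.

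For the pseudo-distance part, I would proceed as follows. Vanishing on the diagonal is immediate: by Proposition \ref{W_p,alpha is distance}, $W_{p,\alpha}(\mu_\alpha,\mu_\alpha)=0$ for every $\alpha$, so the supremum is $0$. Symmetry is equally direct, since each $W_{p,\alpha}$ is symmetric. For the triangle inequality, fix $\mu,\nu,\eta\in QS(\calA)$ and any unital commutative $C^*$-subalgebra $\calA_\alpha$; by the corresponding property of $W_{p,\alpha}$,
\begin{equation*}
W_{p,\alpha}(\mu_\alpha,\eta_\alpha)\leq W_{p,\alpha}(\mu_\alpha,\nu_\alpha)+W_{p,\alpha}(\nu_\alpha,\eta_\alpha)\leq \arWp{p}(\mu,\nu)+\arWp{p}(\nu,\eta),
\end{equation*}
and taking the supremum over $\alpha$ on the left yields $\arWp{p}(\mu,\eta)\leq \arWp{p}(\mu,\nu)+\arWp{p}(\nu,\eta)$.

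It remains to prove that when $L$ is solid, $\arWp{p}(\mu,\nu)=0$ forces $\mu=\nu$. Assume $\arWp{p}(\mu,\nu)=0$. Then $W_{p,\alpha}(\mu_\alpha,\nu_\alpha)=0$ for every $\calA_\alpha\in \calC(\calA)$, in particular for every maximal unital commutative $C^*$-subalgebra. By the definition of solidity, $B\cap \calA_\alpha$ separates points of $S(\calA_\alpha)$ for every such maximal $\calA_\alpha$, hence the restriction of $L$ to $\calA_\alpha$ is an $L$-seminorm and Proposition \ref{W_p,alpha is distance} guarantees that $W_{p,\alpha}$ is an honest distance function on $S(\calA_\alpha)$. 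Consequently $\mu_\alpha=\nu_\alpha$ for every maximal unital commutative $C^*$-subalgebra $\calA_\alpha$. By the remark following the definition of solidity (which is a consequence of Lemma \ref{lemma about self-adjoint part} combined with Zorn's lemma), the family of self-adjoint parts of maximal unital commutative $C^*$-subalgebras covers $\calA^{sa}$, so $\mu$ and $\nu$ agree on all of $\calA^{sa}$. Extending $\C$-linearly via $\mu(a+ib)=\mu(a)+i\mu(b)$ as in the definition of a quasi-state, we conclude $\mu=\nu$ on $\calA$.

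The only point that genuinely requires attention is the last step, where one must make sure that agreement on each maximal commutative subalgebra is enough to identify two quasi-states; this is precisely what the covering property of maximal commutative subalgebras and the extension formula $\mu(a+ib)=\mu(a)+i\mu(b)$ provide. The rest is a mechanical propagation of the axioms through the supremum.
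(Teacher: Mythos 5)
Your proof is correct and follows essentially the same route as the paper: both reduce everything to the componentwise properties of $W_{p,\alpha}$ from Proposition \ref{W_p,alpha is distance} and use solidity together with the covering of $\calA^{sa}$ by maximal commutative subalgebras to get separation of points. The only cosmetic difference is that you spell out the axiom-by-axiom supremum argument that the paper calls ``straightforward to check'' and prove the separation step directly rather than citing Proposition \ref{solid L separates points in QS}, which encapsulates the same argument.
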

\begin{proof}
	According to Proposition \ref{W_beta>=W_alpha}, for any two $\mu,\nu\in QS(\calA)$, $
	W_{p,\beta}(\mu_\beta, \nu_\beta)\geq W_{p, \alpha} (\mu_\alpha,\nu_\alpha)
	$ if $\calA_\alpha\subseteq \calA_\beta$. It follows that 
	$$
	\arWp{p}(\mu,\nu)=\sup\{W_{p,\beta}(\mu_\beta, \nu_\beta): \calA_\beta \text{ is a maximal element of } (\{\calA_\alpha\},\subseteq)\}
	$$
	In other words, we may take supremum over the set consisted of only maximal subalgebras. Since for each of these subalgebras, $W_{p,\alpha}$ is a $[0,+\infty]$-valued pseudo-distance function on $S(\calA_\alpha)$ (by Proposition \ref{W_p,alpha is distance}), we may think of it as a $[0,+\infty]$-valued pseudo-distance function on $S(\calA)$: $W_{p,\alpha}(\mu,\nu):=W_{p,\alpha}(\mu_\alpha,\nu_\alpha)$. It is straightforward to check that $\arWp{p}$ is a $[0,+\infty]$-valued pseudo-distance function.
	
	If $L$ is solid, by Proposition \ref{solid L separates points in QS}, $B(\calA)$ separates points in $QS(\calA)$. Hence, $\mu\neq\nu \implies \mu(f)\neq \nu(f)$ for some $f\in \calA$. Since a self-adjoint part of any $C^*$-algebra is covered by its maximal unital commutative subalgebras (and states of $C^*$-algebra are determined by their values on self-adjoint elements), it follows that $\mu\neq\nu \implies \mu_\alpha\neq \nu_\alpha$ for some maximal $\calA_\alpha\in \calC(\calA)$. Hence $\arWp{p}(\mu,\nu)>0$ iff $\mu\neq \nu$, and $\arWp{p}$ is a $[0,+\infty]$-valued distance function.
\end{proof}

\begin{definition}
	A $\Lip$-seminorm $L$ on a unital $C^*$-algebra $\calA$ is called \textbf{Wasserstein-compatible} iff for any \textit{maximal} unital commutative $C^*$-subalgebra $\calA_\alpha$ of $\calA$
	\begin{enumerate}
		\item $B\cap \calA_\alpha$ is dense in $\calA_\alpha$,
		\item for the closure of $L_\alpha:=L|_{\calA_\alpha}$, $\bar{L}_\alpha$, the inequality
		$$
		\bar{L}_\alpha(\sup(f,g))\leq \max(\bar{L}_\alpha(f), \bar{L}_\alpha(g))
		$$
		is satisfied for all $f,g\in \bar{B}_1\cap \calA_\alpha$. Here $B_1:=\{a\in\calA^{sa}: L(a)\leq 1\}$.
	\end{enumerate}
\end{definition}
Since a dense subspace of a Banach space separates points in its dual, every Wasserstein-compatible $\Lip$-seminorm is a solid $\Lip$-seminorm.
\begin{remark}
	If $\Lip$-seminorm $L$ on $\calA$ has only finite values and
	$$
	L(\sup(f,g))\leq \max(L(f), L(g)),\ \forall f,g\in \calA^{sa},
	$$
	then it is Wasserstein-compatible. More generally, if this inequality is satisfied, $B:=\{a\in\calA^{sa}: L(a)<\infty\}$ is closed under finite lattice operations, and each $B\cap \calA_\alpha$ is dense in $\calA_\alpha$ for any maximal unital commutative $C^*$-subalgebra $\calA_\alpha$ of $\calA$, then $L$ is Wasserstein-compatible. It follows from Corollary 8.3 of \cite{Rieffel2} and closedness of $C^*$-subalgebras.
\end{remark}
As we shall see later, if $\Lip$-seminorm is Wasserstein-compatible, the projective Wasserstein distances metricize the weak$^*$-topology of the state space. Unfortunately, this assumption seems to be too strong, and there are no known non-trivial examples of Wasserstein-compatible $\Lip$-seminorms.
\begin{proposition}
	\label{d_L=W_1_projective}
	For any unital separable $C^*$-algebra $\calA$ and any Wasserstein-compatible $\Lip$-seminorm $L$ on $\calA$, $d_L=\arWp{1}$ on $QS(\calA)$.
\end{proposition}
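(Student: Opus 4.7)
The plan is to reduce the sup defining $\arWp{1}$ to a single sup over elements $f \in \calA^{sa}$ with $L(f)\leq 1$, by invoking Proposition \ref{W_1,alpha=d_L,alpha} on each maximal commutative subalgebra and then using the covering property of maximal subalgebras stated in the remark preceding Proposition \ref{d_L=W_1_projective}.

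First I would observe that, by Proposition \ref{W_beta>=W_alpha}, the map $\alpha \mapsto W_{p,\alpha}(\mu_\alpha,\nu_\alpha)$ is monotone with respect to inclusion of subalgebras. Since (by the Zorn-argument recalled in the remark) every $\calA_\alpha \in \calC(\calA)$ is contained in some maximal unital commutative $C^*$-subalgebra, we may restrict the supremum in the definition of $\arWp{1}$ to maximal subalgebras:
\begin{equation*}
\arWp{1}(\mu,\nu) \;=\; \sup\bigl\{W_{1,\alpha}(\mu_\alpha,\nu_\alpha) : \calA_\alpha \text{ maximal in } \calC(\calA)\bigr\}.
\end{equation*}

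Next, since $L$ is Wasserstein-compatible, for every maximal $\calA_\alpha$ the hypotheses of Proposition \ref{W_1,alpha=d_L,alpha} are satisfied: $\calA$ is separable, $L$ is a $\Lip$-seminorm, $B\cap\calA_\alpha$ is dense in $\calA_\alpha$, and the lattice inequality $\bar L_\alpha(\sup(f,g)) \leq \max(\bar L_\alpha(f),\bar L_\alpha(g))$ holds on $\bar B_1\cap\calA_\alpha$. Hence for every such $\alpha$,
\begin{equation*}
W_{1,\alpha}(\mu_\alpha,\nu_\alpha) \;=\; d_{L,\alpha}(\mu_\alpha,\nu_\alpha) \;=\; \sup\bigl\{|\mu(f)-\nu(f)| : f \in \calA_\alpha^{sa},\ L(f)\leq 1\bigr\}.
\end{equation*}

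Now I would interchange the two suprema, using the elementary identity $\sup_\alpha \sup_{f\in F_\alpha} g(f) = \sup_{f\in \bigcup_\alpha F_\alpha} g(f)$, to obtain
\begin{equation*}
\arWp{1}(\mu,\nu) \;=\; \sup\Bigl\{|\mu(f)-\nu(f)| : f \in \bigcup_{\alpha \text{ max}} \calA_\alpha^{sa},\ L(f)\leq 1\Bigr\}.
\end{equation*}
By the remark preceding this proposition, $\calA^{sa} = \bigcup_{\alpha \text{ max}} \calA_\alpha^{sa}$, so the right-hand side equals precisely $d_L(\mu,\nu)$ as defined on $QS(\calA)$. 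The main obstacle is essentially packaging everything correctly: one needs that Wasserstein-compatibility really does imply the hypotheses of Proposition \ref{W_1,alpha=d_L,alpha} for every maximal $\calA_\alpha$ (in particular, that $L_\alpha$ is a $\Lip$-seminorm, which follows from Lemma \ref{Lip seminorm on subalgebra} together with the solidity implied by density of $B\cap \calA_\alpha$), and that the covering of $\calA^{sa}$ by maximal commutative self-adjoint parts really lets one pass from per-subalgebra suprema to the global sup defining $d_L$.
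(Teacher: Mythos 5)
Your proposal is correct and follows essentially the same route as the paper's proof: restrict to maximal commutative subalgebras, apply Proposition \ref{W_1,alpha=d_L,alpha} (whose hypotheses are exactly what Wasserstein-compatibility supplies), and use the covering of $\calA^{sa}$ by maximal commutative self-adjoint parts to identify the iterated supremum with the single supremum defining $d_L$. The only cosmetic difference is direction — you start from $\arWp{1}$ and build up to $d_L$, while the paper starts from $d_L$ and decomposes — but the ingredients and their roles are identical.
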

\begin{proof}
	Since self-adjoint part of every $C^*$-algebra is covered by its maximal unital commutative subalgebras,
	\begin{multline*}
	d_L(\mu,\nu):=\sup\{|\mu(f)-\nu(f)|:L(f)\leq 1,\ f\in \calA^{sa}\}=\\
	=\sup_\alpha\left\{\sup\{|\mu(f)-\nu(f)|:L(f)\leq 1,\ f\in \calA_\alpha^{sa}\}\right\},\ \forall \mu,\nu\in QS(\calA),
	\end{multline*}
	where $\alpha$ parametrizes the set of all maximal unital commutative $C^*$-subalgebras of $\calA$. It follows by the definition of Wasserstein-compatible $\Lip$-seminorm, that all assumptions of Proposition \ref{W_1,alpha=d_L,alpha} are satisfied, hence $W_{1,\alpha}=d_{L,\alpha}$ for every such $\alpha$. Thus
	\begin{multline*}
	\sup_\alpha\left\{\sup\{|\mu(f)-\nu(f)|:L(f)\leq 1,\ f\in \calA^{sa}_\alpha\}\right\}=\\
	=\sup_\alpha\left\{\sup\{|\mu_\alpha(f)-\nu_\alpha(f)|:L(f)\leq 1,\ f\in \calA^{sa}_\alpha\}\right\}=\\
	=\sup_\alpha \left\{ d_{L,\alpha}(\mu_\alpha,\nu_\alpha) \right\}=\sup_\alpha \left\{W_{1,\alpha}(\mu_\alpha,\nu_\alpha)\right\}=:\arWp{1}(\mu,\nu)
	\end{multline*}
	where $\mu_\alpha=\mu|_{\calA_\alpha}\in S(\calA_\alpha)$ is a restriction of a quasi-state to a subalgebra.
\end{proof}

Let us define \textbf{diameters} of the quasi-state space and the state space of $\calA$ as follows.
\begin{eqnarray}
&\Diam(QS(\calA)):=\sup\{d_L(\mu,\nu): \mu,\nu\in QS(\calA)\}\\
&\Diam(S(\calA)):=\sup\{d_L(\mu,\nu): \mu,\nu\in S(\calA)\}
\end{eqnarray}
Here $\calA$ is a unital $C^*$-algebra, $L$ is an $L$-seminorm. It is clear, that in the case of $\Lip$-seminorm $L$, $\Diam(S(\calA))<\infty$.
Let
\begin{equation}
\Diam(S(\calA_\alpha)):=\sup\{d_{L,\alpha}(\mu_\alpha,\nu_\alpha): \mu_\alpha,\nu_\alpha\in S(\calA_\alpha)\}
\end{equation} 

\begin{proposition}
	\label{Diam>=Diam_alpha}
	Let $\calA$ be a unital $C^*$-algebra, $L$ be a partially-defined $L$-seminorm on $\calA$. Then 
	$$
	\sup_\alpha\left\{\Diam(S(\calA_\alpha))\right\}=\Diam(S(\calA))=\Diam(QS(\calA))
	$$
	where $\alpha$ parametrizes all \textit{maximal} unital commutative $C^*$-subalgebras of $\calA$.
\end{proposition}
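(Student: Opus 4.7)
The plan is to prove the chain of inequalities
\[
\sup_\alpha\{\Diam(S(\calA_\alpha))\}\ \leq\ \Diam(S(\calA))\ \leq\ \Diam(QS(\calA))\ \leq\ \sup_\alpha\{\Diam(S(\calA_\alpha))\}
\]
which will force all three quantities to coincide. The middle inequality is immediate from the inclusion $S(\calA)\subseteq QS(\calA)$ established in the previous section, since $d_L$ on $S(\calA)$ is just the restriction of $d_L$ on $QS(\calA)$.

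For the first inequality, fix a maximal unital commutative $C^*$-subalgebra $\calA_\alpha$ and two states $\mu_\alpha,\nu_\alpha\in S(\calA_\alpha)$. By the Hahn-Banach theorem (or equivalently by the surjectivity of the restriction map $S(\calA)\twoheadrightarrow S(\calA_\alpha)$, which follows as in Section 3), we can extend them to states $\mu,\nu\in S(\calA)$. Since $\calA^{sa}_\alpha\subseteq\calA^{sa}$, the supremum defining $d_{L,\alpha}(\mu_\alpha,\nu_\alpha)$ is taken over a smaller set of test elements than the one defining $d_L(\mu,\nu)$, so $d_{L,\alpha}(\mu_\alpha,\nu_\alpha)\leq d_L(\mu,\nu)\leq\Diam(S(\calA))$. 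Taking supremum over $\mu_\alpha,\nu_\alpha$ and then over $\alpha$ gives the bound.

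For the final inequality, I would exploit the key covering property from Lemma \ref{lemma about self-adjoint part}, together with the Remark preceding this proposition: every self-adjoint element of $\calA$ lies in some \textit{maximal} unital commutative $C^*$-subalgebra. Given $\mu,\nu\in QS(\calA)$ and any test $f\in\calA^{sa}$ with $L(f)\leq 1$, pick such a maximal $\calA_\alpha$ containing $f$; then $\mu(f)=\mu_\alpha(f)$ and $\nu(f)=\nu_\alpha(f)$ by the definition of a quasi-state, whence
\[
|\mu(f)-\nu(f)|=|\mu_\alpha(f)-\nu_\alpha(f)|\ \leq\ d_{L,\alpha}(\mu_\alpha,\nu_\alpha)\ \leq\ \Diam(S(\calA_\alpha))\ \leq\ \sup_\alpha\{\Diam(S(\calA_\alpha))\}.
\]
Taking the supremum over such $f$ yields $d_L(\mu,\nu)\leq\sup_\alpha\{\Diam(S(\calA_\alpha))\}$, and then over $\mu,\nu\in QS(\calA)$ gives $\Diam(QS(\calA))\leq\sup_\alpha\{\Diam(S(\calA_\alpha))\}$, closing the chain.

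The only subtle step is the one invoking the maximal-subalgebra covering of $\calA^{sa}$ for quasi-states, but this has already been prepared in the Remark via Zorn's lemma applied to the poset of commutative unital $C^*$-subalgebras containing a fixed self-adjoint element; the rest is a bookkeeping exercise with suprema and the definitions of $d_L$ and $d_{L,\alpha}$.
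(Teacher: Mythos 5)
Your proof is correct and rests on exactly the same two ingredients as the paper's: surjectivity of the restriction maps $S(\calA)\twoheadrightarrow S(\calA_\alpha)$ (equivalently, extendability of states from a maximal commutative subalgebra) and the covering of $\calA^{sa}$ by the self-adjoint parts of maximal unital commutative $C^*$-subalgebras, which gives $d_L=\sup_\alpha d_{L,\alpha}$ on restrictions. The paper organizes this as a chain of equalities with an interchange of suprema, while you arrange it as a cycle of inequalities, but the content is the same.
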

\begin{proof}
	Since the restriction map $(R_\alpha)_\#: S(\calA) \twoheadrightarrow S(\calA_\alpha)$, $(R_\alpha)_\#(\mu)=\mu_\alpha:=\mu|_{\calA_\alpha}$ is surjective, and the analogous map, defined on $QS(\calA)$,
	$$
	(R_\alpha)_\#: QS(\calA) \twoheadrightarrow S(\calA_\alpha),
	$$
	is surjective too, the equality $\sup_\alpha \left\{d_{L,\alpha}(\mu|_{\calA_\alpha},\nu|_{\calA_\alpha})\right\}=d_L(\mu,\nu)$, where $\alpha$ parametrizes all maximal unital commutative $C^*$-subalgebras, implies that
	\begin{multline*}
	\sup_\alpha\left\{\Diam(S(\calA_\alpha))\right\}=\sup_\alpha\left\{\sup\{d_{L,\alpha}(\mu_\alpha,\nu_\alpha): \mu_\alpha,\nu_\alpha\in S(\calA_\alpha)\}\right\}=\\
	 =\sup_\alpha\left\{\sup\{d_{L,\alpha}(\mu|_{\calA_\alpha},\nu|_{\calA_\alpha}): \mu,\nu\in S(\calA)\}\right\}=\\
	=\sup\{\sup_\alpha \left\{d_{L,\alpha}(\mu|_{\calA_\alpha},\nu|_{\calA_\alpha})\right\}: \mu,\nu\in S(\calA)\}=\\
	=\sup\{d_L(\mu,\nu): \mu,\nu\in S(\calA)\}=:\Diam(S(\calA))
	\end{multline*}
	By exactly the same argument,
	\begin{multline*}
	\sup_\alpha\left\{\Diam(S(\calA_\alpha))\right\}=\sup_\alpha\left\{\sup\{d_{L,\alpha}(\mu_\alpha,\nu_\alpha): \mu_\alpha,\nu_\alpha\in S(\calA_\alpha)\}\right\}=\\ 
	=\sup_\alpha\left\{\sup\{d_{L,\alpha}(\mu|_{\calA_\alpha},\nu|_{\calA_\alpha}): \mu,\nu\in QS(\calA)\right\}\}=\\
	=\sup\{\sup_\alpha \left\{d_{L,\alpha}(\mu|_{\calA_\alpha},\nu|_{\calA_\alpha})\right\}: \mu,\nu\in QS(\calA)\}=\\
	=\sup\{d_L(\mu,\nu): \mu,\nu\in QS(\calA)\}=:\Diam(QS(\calA))
	\end{multline*}
\end{proof}

\begin{proposition}
	\label{upper bound for W_p_projective}
	Let $\calA$ be a unital $C^*$-algebra, $L$ be a partially-defined $L$-seminorm on $\calA$. Then for every $p\in [1,\infty)$ and every $\mu,\nu\in QS(\calA)$
	$$
	\arWp{p}^p(\mu,\nu)\leq \Diam(S(\calA_\alpha))^{p-1}\cdot \arWp{1}(\mu,\nu)
	$$
\end{proposition}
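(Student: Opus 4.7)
The plan is to reduce the claim to a uniform estimate on each maximal unital commutative $C^*$-subalgebra $\calA_\alpha$ and then take a supremum, exploiting Proposition \ref{Diam>=Diam_alpha} to identify $\sup_\alpha \Diam(S(\calA_\alpha))$ with the global diameter. The argument is essentially a Hölder-type bound applied pointwise on $X_\alpha \times X_\alpha$ before passing to the infimum over couplings.

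First I would fix a maximal $\calA_\alpha\in \calC(\calA)$ and work inside the compact Hausdorff space $X_\alpha = \Spec(\calA_\alpha)$ with its pseudo-distance $d_{L,\alpha}$. Since $X_\alpha$ embeds as the Dirac masses $\{\delta_x : x\in X_\alpha\}$ into $S(\calA_\alpha)=\calP(X_\alpha)$, and the distance $d_{L,\alpha}$ on $S(\calA_\alpha)$ restricts to $d_{L,\alpha}$ on $X_\alpha$, we have $d_{L,\alpha}(x,y) \leq \Diam(S(\calA_\alpha))$ for all $x,y\in X_\alpha$. Therefore, for any coupling $\pi\in \Pi(\mu_\alpha,\nu_\alpha)$,
\begin{equation*}
\int d_{L,\alpha}^{\,p}(x,y)\,d\pi = \int d_{L,\alpha}^{\,p-1}(x,y)\cdot d_{L,\alpha}(x,y)\,d\pi \leq \Diam(S(\calA_\alpha))^{p-1}\int d_{L,\alpha}(x,y)\,d\pi.
\end{equation*}
Taking the infimum over $\pi\in \Pi(\mu_\alpha,\nu_\alpha)$ yields the $\alpha$-local estimate
\begin{equation*}
W_{p,\alpha}^{\,p}(\mu_\alpha,\nu_\alpha) \leq \Diam(S(\calA_\alpha))^{p-1}\cdot W_{1,\alpha}(\mu_\alpha,\nu_\alpha).
\end{equation*}

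Second, I would pass to the supremum over maximal subalgebras. By Proposition \ref{W_beta>=W_alpha} the suprema defining $\arWp{p}$ and $\arWp{1}$ may be taken over maximal $\calA_\alpha$ only. Using the trivial bound $\sup_\alpha(a_\alpha b_\alpha)\leq (\sup_\alpha a_\alpha)(\sup_\alpha b_\alpha)$ for non-negative quantities, we obtain
\begin{equation*}
\arWp{p}^{\,p}(\mu,\nu) = \sup_\alpha W_{p,\alpha}^{\,p}(\mu_\alpha,\nu_\alpha) \leq \Bigl(\sup_\alpha \Diam(S(\calA_\alpha))^{p-1}\Bigr)\cdot \sup_\alpha W_{1,\alpha}(\mu_\alpha,\nu_\alpha).
\end{equation*}
Proposition \ref{Diam>=Diam_alpha} gives $\sup_\alpha \Diam(S(\calA_\alpha)) = \Diam(S(\calA)) = \Diam(QS(\calA))$, so the first factor on the right is the $(p-1)$-st power of this common diameter, and the second factor is $\arWp{1}(\mu,\nu)$.

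The argument has no real obstacle; the only subtlety worth being explicit about is the identification of $d_{L,\alpha}$-distances between Dirac masses with distances on $X_\alpha$, and the fact that the double supremum $\sup_\alpha \Diam(S(\calA_\alpha))$ collapses to the single global diameter by the previous proposition (so the bound as written in the statement, with $\Diam(S(\calA_\alpha))$ read as the supremum of diameters over maximal $\alpha$, is consistent with $\Diam(S(\calA))$ or $\Diam(QS(\calA))$).
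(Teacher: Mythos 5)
Your proof is correct and follows essentially the same route as the paper's: the pointwise bound $d_{L,\alpha}^p \leq \Diam(S(\calA_\alpha))^{p-1} d_{L,\alpha}$ integrated against an arbitrary coupling, then the infimum over couplings, then the supremum over maximal subalgebras with Proposition \ref{Diam>=Diam_alpha} collapsing $\sup_\alpha \Diam(S(\calA_\alpha))$ to the global diameter. Your explicit remark about reading the free index $\alpha$ in the stated bound as a supremum (so that the constant is really $\Diam(S(\calA))$) matches exactly how the paper's own proof resolves that notational looseness.
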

\begin{proof}
	Recall, that by definition,
	$$
	W_{p,\alpha}^p(\mu_\alpha,\nu_\alpha):=\inf\left\{\int d_{L, \alpha}^p d\pi: \pi\in \Pi(\mu_\alpha,\nu_\alpha)\right\}
	$$
	For any $\pi\in \calP(X_\alpha\times X_\alpha)$ it is true, that 
	\begin{multline*}
	\int_{X_\alpha\times X_\alpha} d_{L, \alpha}^p(x,y) d\pi\leq \sup\{d_{L,\alpha}^{p-1}(x,y): x,y\in X_\alpha\}\cdot\int_{X_\alpha\times X_\alpha} d_{L, \alpha}(x,y) d\pi\leq\\
	\leq \sup\{d_{L,\alpha}^{p-1}(\tilde{\mu}_\alpha,\tilde{\nu}_\alpha): \tilde{\mu}_\alpha,\tilde{\nu}_\alpha\in S(\calA_\alpha)\} \cdot\int_{X_\alpha\times X_\alpha} d_{L, \alpha}(x,y) d\pi =\\
	=\Diam(S(\calA_\alpha))^{p-1}\cdot \int_{X_\alpha\times X_\alpha} d_{L, \alpha}(x,y) d\pi
	\end{multline*}
	Passing to the infimum over all $\pi\in \Pi(\mu_\alpha,\nu_\alpha)$, we obtain:
	$$
	W_{p,\alpha}^p(\mu_\alpha,\nu_\alpha)\leq \Diam(S(\calA_\alpha))^{p-1}\cdot W_{1,\alpha}(\mu_\alpha,\nu_\alpha)
	$$
	As follows from Proposition \ref{Diam>=Diam_alpha}, $W_{p,\alpha}^p(\mu_\alpha,\nu_\alpha)\leq \Diam(S(\calA))^{p-1}\cdot W_{1,\alpha}(\mu_\alpha,\nu_\alpha)$. Let us fix some $\mu,\nu\in QS(\calA)$ and take the supremum over all maximal unital commutative subalgebras $\calA_\alpha$. We obtain
	$$
	\arWp{p}^p(\mu,\nu)\leq \Diam(S(\calA))^{p-1}\cdot \arWp{1}(\mu,\nu)
	$$
\end{proof}

\begin{theorem}
	\label{W_proj metricizes weak-star}
	Let $\calA$ be a unital separable $C^*$-algebra, $L$ be a Wasserstein-compatible $\Lip$-seminorm on $\calA$. Then $\arWp{p}$ metricizes the weak$^*$-topology on $S(\calA)$ for every $p\in [1,\infty)$.
\end{theorem}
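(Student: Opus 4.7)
The plan is to deduce metrization for $p > 1$ from the already-established case $p = 1$ by a sandwich argument between $\arWp{p}$ and $\arWp{1}$.

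First I would check that all the ingredients are in place. Since a Wasserstein-compatible $\Lip$-seminorm is in particular solid (as noted in the remark preceding the theorem), Proposition \ref{W_proj is distance on QS} tells us that $\arWp{p}$ is a genuine $[0,+\infty]$-valued distance function on $QS(\calA) \supseteq S(\calA)$. Next, by Proposition \ref{d_L=W_1_projective}, we have the identification $d_L = \arWp{1}$ on $QS(\calA)$, and since $L$ is a $\Lip$-seminorm (by definition), $d_L$ metricizes the weak$^*$-topology on $S(\calA)$. In particular, $\arWp{1}$ metricizes weak$^*$ on $S(\calA)$ and $\Diam(S(\calA)) < \infty$.

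The key step is then a two-sided inequality relating $\arWp{p}$ and $\arWp{1}$ on $S(\calA)$. For the lower bound, Proposition \ref{W_q>=W_p} gives $W_{1,\alpha} \leq W_{p,\alpha}$ for each maximal unital commutative subalgebra $\calA_\alpha$; taking the supremum over $\alpha$ yields
$$
\arWp{1}(\mu,\nu) \leq \arWp{p}(\mu,\nu), \qquad \forall \mu,\nu \in S(\calA).
$$
For the upper bound, Proposition \ref{upper bound for W_p_projective} directly gives
$$
\arWp{p}^p(\mu,\nu) \leq \Diam(S(\calA))^{p-1}\cdot \arWp{1}(\mu,\nu).
$$

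Finally I would translate these inequalities into an equality of topologies. Fix $\mu_0 \in S(\calA)$ and $r > 0$. From the lower bound, every $\arWp{p}$-ball of radius $r$ around $\mu_0$ is contained in the corresponding $\arWp{1}$-ball of radius $r$. From the upper bound, the $\arWp{1}$-ball of radius $r^p/\Diam(S(\calA))^{p-1}$ around $\mu_0$ is contained in the $\arWp{p}$-ball of radius $r$. Hence the two metrics generate the same topology on $S(\calA)$, and since this topology is the weak$^*$ one (by Step 1), the theorem follows. Since every key estimate has been proved beforehand, there is no real obstacle left; the only point worth care is to ensure that $\Diam(S(\calA))$ is finite, which is automatic for a $\Lip$-seminorm.
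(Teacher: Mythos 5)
Your proposal is correct and follows essentially the same route as the paper's own proof: identify $\arWp{1}$ with $d_L$ via Proposition \ref{d_L=W_1_projective}, then sandwich $\arWp{p}$ between $\arWp{1}$ and $\Diam(S(\calA))^{\frac{p-1}{p}}\cdot \arWp{1}^{\frac{1}{p}}$ using Propositions \ref{W_q>=W_p} and \ref{upper bound for W_p_projective}. The only difference is that you spell out the ball-inclusion argument explicitly, which the paper leaves implicit.
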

\begin{proof}
	Since $d_L$ metricizes the weak$^*$-topology on $S(\calA)$, and $S(\calA)$ is compact, $\Diam(S(\calA))$ is finite. By Proposition \ref{d_L=W_1_projective}, $\arWp{1}(\mu,\nu)=d_L(\mu,\nu)$ on $QS(\calA)$, hence $\arWp{1}(\mu,\nu)$ metricizes the Weak$^*$-topology on $S(\calA)$.
	By Proposition \ref{upper bound for W_p_projective}, 
	$$
	\arWp{p}\leq\Diam(S(\calA_\alpha))^{\frac{p-1}{p}}\cdot \arWp{1}^{\frac{1}{p}}
	$$
	By Proposition \ref{W_q>=W_p}, $W_{1,\alpha}\leq W_{p,\alpha}$ for $p\in [1,+\infty]$, $\calA_\alpha\in \calC(\calA)$. It follows, that
	$$
	\arWp{1}\leq\arWp{p}\leq\Diam(S(\calA_\alpha))^{\frac{p-1}{p}}\cdot \arWp{1}^{\frac{1}{p}}
	$$
	Thus, $\arWp{p}$ metricizes the same topology as $\arWp{1}$ does.
\end{proof}

Let us look on the projective $L^p$-Wasserstein distance from the categorical point of view. It is possible to define a category of all pairs $(K, d)$, where $K$ is a Hausdorff compact convex set and $d$ is a $[0,\infty]$-valued pseudo-distance function on $K$. Morphisms between $(K_1, d_1)$ and $(K_2, d_2)$ in this category are defined as continuous affine maps $f$ between $K_1$ and $K_2$ such that $d_2\circ f \leq d_1$ (in other words, $f$ is a contraction). Denote this category as $\bf CCHpd_{con}$. As $\bf CCH$ does, it contains all small projective limits.

\begin{proposition}
	For any small diagram $(\{(K_\alpha, d_\alpha)\}, \calT)$, $\calT=\{T_{\alpha,\beta}: K_\alpha \rightarrow K_\beta, \alpha\leq \beta\}$ in $\bf CCHpd_{con}$ there is a projective limit. The category $\bf CCHpd_{con}$ is complete.
	Projective limit in $\bf CCHpd_{con}$ is defined as a projective limit in $\bf CCH$, equipped with the following pseudo-distance function: $d:=\sup_\alpha \{d_\alpha \circ Pr_\alpha \}$. Projections $\pr_\alpha$ are the projection maps from the definition of the projective limit in $\bf CCH$. 
\end{proposition}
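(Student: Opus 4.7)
The plan is to transplant the construction from Proposition \ref{existence of inverse limits in CCH} into $\bf CCHpd_{con}$, add the supremum pseudo-distance on top, and verify that the universal property of the $\bf CCH$-limit lifts automatically because the comparison map turns out to be a contraction for free. The argument is organized around four short checks.

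First, I would let $K := \varprojlim (\{K_\alpha\}, \calT)$ be the projective limit in $\bf CCH$, with its canonical continuous affine projections $\pr_\alpha : K \to K_\alpha$ satisfying the usual compatibility with $\calT$. I would then define
\[
d(x, y) := \sup_\alpha d_\alpha(\pr_\alpha(x), \pr_\alpha(y)), \qquad x, y \in K,
\]
interpreting $d_\alpha \circ \pr_\alpha$ pointwise as in the statement, with values in $[0, +\infty]$.

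Next I would check that $d$ is a $[0,+\infty]$-valued pseudo-distance function. Non-negativity, vanishing on the diagonal, and symmetry are inherited termwise from each $d_\alpha$. For the triangle inequality, fix $x, y, z \in K$. For every index $\alpha$,
\[
d_\alpha(\pr_\alpha(x), \pr_\alpha(z)) \leq d_\alpha(\pr_\alpha(x), \pr_\alpha(y)) + d_\alpha(\pr_\alpha(y), \pr_\alpha(z)) \leq d(x,y) + d(y,z),
\]
and taking the supremum over $\alpha$ on the left yields $d(x,z) \leq d(x,y) + d(y,z)$. Hence $(K, d)$ is an object of $\bf CCHpd_{con}$. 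Moreover, each $\pr_\alpha$ is a morphism in $\bf CCHpd_{con}$: it is continuous and affine by construction, and it is a contraction because $d_\alpha(\pr_\alpha(x), \pr_\alpha(y)) \leq \sup_\beta d_\beta(\pr_\beta(x), \pr_\beta(y)) = d(x,y)$.

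For the universal property, let $(L, d_L)$ be any object of $\bf CCHpd_{con}$ and $\{\varphi_\alpha : L \to K_\alpha\}$ a cone over the diagram in $\bf CCHpd_{con}$, so each $\varphi_\alpha$ is a continuous affine contraction compatible with all $T_{\alpha,\beta}$. Forgetting the pseudo-distance structure, Proposition \ref{existence of inverse limits in CCH} supplies a unique continuous affine $\varphi : L \to K$ with $\pr_\alpha \circ \varphi = \varphi_\alpha$ for every $\alpha$. It remains only to verify that $\varphi$ is a contraction in the $\bf CCHpd_{con}$ sense, which is immediate:
\[
d(\varphi(x), \varphi(y)) = \sup_\alpha d_\alpha(\pr_\alpha(\varphi(x)), \pr_\alpha(\varphi(y))) = \sup_\alpha d_\alpha(\varphi_\alpha(x), \varphi_\alpha(y)) \leq d_L(x, y).
\]
Uniqueness of $\varphi$ in $\bf CCHpd_{con}$ follows from uniqueness in $\bf CCH$, since any $\bf CCHpd_{con}$-morphism is in particular a $\bf CCH$-morphism. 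Since the diagram was arbitrary small, this shows $\bf CCHpd_{con}$ is complete. There is no genuine obstacle here; the only point requiring a moment's care is to note that the definition of $\bf CCHpd_{con}$ permits $d$ to take the value $+\infty$, so taking an unrestricted supremum over $\alpha$ is legitimate and does not force us out of the category.
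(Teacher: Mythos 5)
Your proof is correct and follows essentially the same route as the paper: take the projective limit in $\bf CCH$, equip it with the supremum pseudo-distance, observe that the projections are contractions by construction, and lift the universal property by noting that the comparison map is automatically a contraction after taking the supremum over $\alpha$. You are in fact slightly more thorough than the paper, which omits the explicit verification that $d$ satisfies the triangle inequality.
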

\begin{proof}
	It is obvious, that $d\geq d_\alpha\circ \pr_\alpha$, hence all $\pr_\alpha$ are morphisms in $\bf CCHpd_{con}$.
	
	Suppose we have an object $(K, d_K)$ and a family of morphisms $\varphi_\alpha: K\rightarrow K_\alpha$, such that $\varphi_\alpha=T_{\beta,\alpha}\circ \varphi_\beta$ iff $\alpha\leq \beta$ and $d_\alpha\circ \varphi_\alpha\leq d_K$. Define $\varphi: K\rightarrow \varprojlim K_\alpha$ as $\varphi(x)=(\varphi_\alpha(x))$. Then $(\pr_\alpha\circ \varphi)(x)=\pr_\alpha(\varphi_\alpha(x))=\varphi_\alpha(x)$, hence $\pr_\alpha\circ \varphi=\varphi_\alpha$ and 
	$$
	d_\alpha\circ\pr_\alpha\circ \varphi= d_\alpha\circ \varphi_\alpha\leq d_K
	$$
	Take the supremum over all $\alpha$ in both sides of the inequality:
	$$
	d_K\geq \sup_\alpha \{d_\alpha \circ \pr_\alpha \circ \varphi\} = d \circ \varphi
	$$
	Hence $\varphi$ is a morphism in $\bf CCHpd_{con}$. The uniqueness of this morphism follows from its uniqueness in $\bf CCH$ (see the proof of Proposition \ref{existence of inverse limits in CCH}).
\end{proof}

Let us define the category of all pairs $(\calA, L)$, where $\calA$ is a unital commutative $C^*$-algebra and $L$ is a partially-defined $L$-seminorm on $\calA$. Morphisms between $(\calA_1,L_1)$ and $(\calA_2, L_2)$ are injective unital $C^*$-homomorphisms between $\calA_1$ and $\calA_2$, such that $L_2(f(a))=L_1(a)$ $\forall a\in \calA$. Denote it by $\bf ucC^*L_{in}$.

Define a functor $(S,d)$ from $\bf ucC^*L_{in}^{op}$ to $\bf CCHpd_{con}$ that acts on objects as follows:
$$
(S,d)(\calA,L)=(S(\calA), d_L)
$$
and sends every morphism $f$ to its Banach adjoint $f^*$ restricted to $S(\calA)$.
Functoriality of this map follows from the fact that $d_L$ is a $[0,+\infty]$-valued pseudo-distance function, and that $d_{L,\alpha}\circ f^*\leq d_{L,\beta}$, if $f: \calA_\alpha \hookrightarrow \calA_\beta$ is a morphism in $\bf ucC^*L_{in}$. 

Analogously, we can define functors $(S,\Wass_p)$ from $\bf ucC^*L_{in}^{op}$ to $\bf CCHpd_{con}$ for every $p\in [1,\infty)$ that act on objects as follows:
$$
(S, \Wass_p)(\calA,L):=(S(\calA), W_p)
$$
and maps every morphism $f$ to its Banach adjoint $f^*$ restricted to $S(\calA)$. Here $W_p$ is the $L^p$-Wasserstein distance on $(S(\calA), W_p)$ w.r.t. pseudo-distance function $d_L$ on $\Spec(\calA)$.
The fact, that it is a functor, follows from Proposition \ref{W_p,alpha is distance}, which asserts that $W_p$ is a $[0,+\infty]$-valued pseudo-distance function, and Proposition \ref{W_beta>=W_alpha}, which implies that $W_p\circ f^*\leq W_p$ if $f: \calA_\alpha \hookrightarrow \calA_\beta$ is a morphism in $\bf ucC^*L_{in}$. 

Recall, that for a $C^*$-algebra $\calA$ we can consider a diagram $\calC(\calA)$ (in the category $\bf ucC^*_{in}$) of all unital commutative $C^*$-subalgebras of $\calA$ ordered by inclusion. Let us consider a partially-defined $\Lip$-seminorm $L$ on $\calA$ and equip each $\calA_\alpha\in \calC(\calA)$ with the restriction of $L$ to $\calA_\alpha$. We obtain a diagram $(\{(\calA_\alpha, L_\alpha)\}, \subseteq)$ in $\bf ucC^*L_{in}$.

Using the defined above functors, $(S,d)$ and $(S, \Wass_p)$, we can obtain diagrams in the category $\bf CCHpd_{con}$: $(\{(S(\calA_\alpha), d_{L,\alpha})\}, \calR_\#)$ and $(\{(S(\calA_\alpha), W_{p,\alpha})\}, \calR_\#)$. Projective limits of these diagrams are exactly the quasi-state spaces $(QS(\calA),d_L)$ and $(QS, \arWp{p})$. 

Resume the already known facts about these spaces:
\begin{itemize}
	\item If $L$ is a partially-defined $L$-seminorm on $\calA$, then $(QS(\calA),d_L)$ and $(QS, \arWp{p})$ for every $p\in [1,\infty)$ are compact convex Hausdorff spaces equipped with $[0,\infty]$-valued lower semi-continuous pseudo-distance functions. Proof: Prop. \ref{d_L is distance on QS} and \ref{W_proj is distance on QS}.
	\item If $L$ is an $L$-seminorm on $\calA$, then $(QS(\calA),d_L)$ is a compact convex space equipped with $[0,\infty]$-valued lower semi-continuous distance function. Proof: Prop. \ref{d_L is distance on QS}.
	\item If $L$ is a solid $L$-seminorm on $\calA$, then $(QS(\calA), \arWp{p})$ for every $p\in [1,\infty)$ is a compact convex space equipped with $[0,\infty]$-valued lower semi-continuous distance function. Proof: Prop. \ref{W_proj is distance on QS}.
	\item If $L$ is a Wasserstein-compatible $\Lip$-seminorm on $\calA$, then $(S(\calA), \arWp{p})$ for every $p\in [1,\infty)$ is a compact convex metric space, and $(QS(\calA),d_L)=(QS(\calA), \arWp{1})$. Proof: Prop. \ref{d_L=W_1_projective} and \ref{W_proj metricizes weak-star}.
\end{itemize}

\section{Some open problems}
\sectionmark{Some open problems}

We provided a rigorous definition for a space of quasi-states, $QS(\calA)$, where $\calA$ is a unital $C^*$-algebra. It appears to be a compact convex set. It is natural to ask, how to characterize compact convex sets that arise this way. Recall, that for the state spaces of $C^*$-algebras there is a complete characterization in the terms of their convex structure (see \cite{Alfsen}). It seems that there should be a connection (duality of some sort) between partial $C^*$-algebras, introduced in \cite{Heunen3}, and the desired notion of $QS$-spaces.

It is known, that the self-adjoint part of a $C^*$-algebra can be recovered from its state space (which is considered as an ordered Banach space). How much information is contained in a quasi-state space? Is it possible to recover a diagram of all unital commutative $C^*$-subalgebras knowing only the corresponding quasi-state space? 

There is a lack of non-trivial examples of solid $\Lip$-seminorms. Obviously, all pairs $(\calA,L)$, where $\calA$ is a unital $C^*$-algebra and $L$ is a finite $\Lip$-seminorm, are acceptable examples of solid $\Lip$-seminorms. The problem is to find a pair of a separable infinite-dimensional noncommutative $C^*$-algebra and a solid $\Lip$-seminorm defined on it, which is not everywhere finite. Recall that, by definition, $\Lip$-seminorm is solid iff it is finite on a \textquotedblleft weak dense" (in the sense of separation of points) subset of every maximal abelian $C^*$-subalgebra. This assumption seems natural, but it is difficult to verify. The explicit description of maximal abelian $C^*$-subalgebras is possible for some von Neumann algebras, but only finite-dimensional von Neumann algebras are separable as topological spaces. It is interesting to verify, whether the classical examples of quantum compact metric spaces, introduced by Connes and Rieffel, satisfy this property.

Speaking about Wasserstein-compatible $\Lip$-seminorms, there is a lack of examples of such seminorms even in finite-dimensional case. It is interesting to know, do the matrix algebras allow a $\Lip$-seminorm of this type.

\section*{Acknowledgement}
The author is extremely grateful to Dr. Frederic Latremoliere for pointing out a serious mistake in the first version of this paper.

The paper was written during an internship of the author in Scuola Normale Superiore di Pisa. The author is grateful to SNS for its hospitality, Dr. Luigi Ambrosio for interesting and fruitful mathematical conversations, and Higher School of Economics for a sponsorship of this visit.

The author wish to thank Alexander Kolesnikov for a constant support during the author’s research activity.


\begin{thebibliography}{100}
	
	\bibitem{Alfsen}
	Alfsen E. M., Shultz F. W., 
	\emph{Geometry of State Spaces of Operator Algebras}.
	Birkhauser, Mathematics: Theory and Applications, 2003	
	
	
	\bibitem{AmbGigli1}
	Ambrosio L., Gigli N.,
	\emph{A user’s guide to optimal transport}.
	Modelling and Optimisation of Flows on Networks
	Lecture Notes in Mathematics pp 1-155, 2013.
	
	\bibitem{Batty}
	Batty J. K., \emph{Simplexes of states of $C^*$-algebras}. Volume 4, Issue 1, Summer 1980.  C.  J. Operator Theory pp. 3-23.

	\bibitem{Heunen3}
	Benno van den Berg, Heunen C., \emph{Noncommutativity as a colimit}.
	Applied Categorical Structures 20(4):393--414, 2012

	\bibitem{Bogachev}
	Bogachev V. I., \emph{Measure Theory, Vol I. and II.}. Springer-Verlag, Heidelberg, 2007.

	\bibitem{BogKol}
	Bogachev V. I., Kolesnikov A. V., \emph{The Monge-Kantorovich problem: achievements, connections, and perspectives}, Russian Mathematical Surveys, 67(5):785, 2012
	
	\bibitem{Bunce}
	Bunce L. J., Maitland Wright J. D., \emph{The quasi-linearity problem for $C^*$-algebras}, Pacific J. Math.
	Volume 172, Number 1 (1996), 41-47.
	
	\bibitem{CarMaas}
	Carlen E., Maas J., \emph{An Analogue of the 2-Wasserstein Metric in Non-Commutative Probability Under Which the Fermionic Fokker–Planck Equation is Gradient Flow for the Entropy}, 
	Communications in Mathematical Physics, Volume 331, Issue 3, pp 887-926, November 2014.	
	
	\bibitem{Heunen1}
	Caspers M., Heunen C., Landsman N. P., Spitters B., \emph{ Intuitionistic quantum logic of an n-level system}.
	Foundations of Physics, 39(7):731-759, 2009	
		
	\bibitem{Connes}
	Connes A., 
	\emph{Noncommutative Geometry}.
	Academic Press, 1994.	

	\bibitem{Martinetti1}
	D'Andrea F., Martinetti P., 
	\emph{A View on Optimal Transport from Noncommutative Geometry}. SIGMA 6:057,2010, 2012. 

	\bibitem{Dynkin}
	Dynkin E. B., Sufficient \emph{Statistics and Extreme Points}. Ann. Probab. 6 (1978), no. 5, 705--730. doi:10.1214/aop/1176995424, Reader in Mathematics, University of Essex, 2002. 


	\bibitem{Heunen2}
	Heunen C., Landsman N. P., Spitters B., Wolters S., \emph{The Gelfand spectrum of a noncommutative $C^*$-algebra: a topos-theoretic approach}.
	Journal of the Australian Mathematical Society (90) 39-52, 2011

	\bibitem{Isham1}
	Isham C.J., Butterfield J., \emph{A topos perspective on the Kochen-Specker theorem: I. Quantum States as Generalized Valuations}.
	Int. J. of Theoretical Physics, Vol. 37, No. 11, 1998.

	\bibitem{Weaver}
	Kuperberg G., Weaver N., \emph{A von Neumann Algebra Approach to Quantum Metrics/Quantum Relations}.	
	Mem. of the American Math. Soc., Vol. 215, N. 1010, 2011. 

	\bibitem{Martinetti2}
	Martinetti P., 
	\emph{Towards a Monge-Kantorovich metric in noncommutative geometry}. ArXiv:1210.6573 [math-ph], 2012.	
  

	\bibitem{Phelps}
	Phelps R., \emph{Lectures on Choquet's Theorem}.
	Springer-Verlag Berlin Heidelberg, doi 10.1007/b76887, 2001.
	
	\bibitem{Rieffel1}
	Rieffel M. A., \emph{Metrics on states from actions of compact groups}.
	Documenta Mathematica 3 (1998) 215-229	
		
	\bibitem{Rieffel2}
	Rieffel M. A., \emph{Metrics on State Spaces}.
	Documenta Math. 4 (1999) 559-600
	
	\bibitem{Rieffel3}
	Rieffel M. A., \emph{Gromov-Hausdorff Distance for Quantum Metric Spaces}.
	Mem. Amer. Math. Soc. 168 (2004) no. 796, 1-65	
	
	\bibitem{Villani1}
	Villani C., \emph{Optimal transport, Old and new}. Grundlehren der Mathematischen Wissenschaften, Springer-
	Verlag, Berlin, 2009.
\end{thebibliography}
\end{document}